\numberwithin{figure}{section}
\theoremstyle{plain}
\newtheorem{thm}{\protect\theoremname}
  \theoremstyle{definition}
  \newtheorem{defn}[thm]{\protect\definitionname}
  \theoremstyle{definition}
  \newtheorem{example}[thm]{\protect\examplename}
  \theoremstyle{plain}
  \newtheorem{lem}[thm]{\protect\lemmaname}
  \theoremstyle{remark}
  \newtheorem{rem}[thm]{\protect\remarkname}
  \theoremstyle{plain}
  \newtheorem{prop}[thm]{\protect\propositionname}
\DeclareFontFamily{OT1}{pzc}{}
\DeclareFontShape{OT1}{pzc}{m}{it}{<-> s * [1.10] pzcmi7t}{}
\DeclareMathAlphabet{\mathpzc}{OT1}{pzc}{m}{it}
\newcommand{\rr}{\mathbb{R}}
\newcommand{\pp}{\mathbb{P}}
\newcommand{\ff}{\mathcal{F}}
\newcommand{\kk}{\mathcal{K}}
\newcommand{\cc}{\mathbb{C}}
\newcommand{\zz}{\mathbb{Z}}
\newcommand{\lc}{\mathcal{L}}
\newcommand{\xx}{\mathcal{X}}
\newcommand{\yy}{\mathcal{Y}}
\newcommand{\mm}{\mathcal{M}}
\newcommand{\im}{\operatorname{Im}}
\newcommand{\coker}{\operatorname{coker}}
\newcommand{\cl}{\mathcal{C}}
\newcommand{\manc}{\textbf{Man}^c}
\newcommand{\tb}{\mathbb{T}}
\newcommand{\zc}{\mathcal{Z}}
\newcommand{\basic}{\mathfrak{b}}
\newcommand{\aru}[2]{\ar[#1]^-{#2}}
\newcommand{\lf}{\mathsf{l}}
\newcommand{\kf}{\mathsf{k}}
\newcommand{\wc}{\mathcal{W}}
\newcommand{\fibp}[2]{\tensor[_{#1\thinspace}]{\times}{_{\thinspace #2}}}
\newcommand{\ev}{\operatorname{ev}}
\newcommand{\Sym}{\operatorname{Sym}}
\newcommand{\Or}{\operatorname{Or}}
\newcommand{\id}{\operatorname{id}}
\newcommand{\pt}{\operatorname{pt}}
\newcommand{\Aut}{\operatorname{Aut}}
\newcommand{\sstar}{\smallstar}
  \providecommand{\definitionname}{Definition}
  \providecommand{\examplename}{Example}
  \providecommand{\lemmaname}{Lemma}
  \providecommand{\propositionname}{Proposition}
  \providecommand{\remarkname}{Remark}
\providecommand{\theoremname}{Theorem}
\begin{document}

\author{Amitai Netser Zernik}\thanks{Hebrew University, \url{amitai.zernik@mail.huji.ac.il}}

\title{Moduli of Open Stable Maps to a Homogeneous Space}
\begin{abstract}
For $L\hookrightarrow X$ a Lagrangian embedding associated with a
real homogeneous variety, we construct the moduli space of stable
holomorphic discs mapping to $\left(X,L\right)$ as an orbifold with
corners equipped with a group action. Some essential constructions
involving orbifolds with corners are also discussed, including the
existence of fibered products and pushforward and pullback of differential
forms with values in a local system.
\end{abstract}

\maketitle
\tableofcontents{}

\section{\label{sec:Main results}Introduction}

Let $\left(X,\omega\right)$ be a closed symplectic manifold and
$L\subset X$ be a Lagrangian submanifold. We are interested in invariants
derived from stable maps of discs to $X$, whose boundary is required
to lie on $L$. The first step in producing such invariants is to
construct something akin to a singular chain from the moduli spaces
of such maps, and requires introducing perturbations that make the
Cauchy-Riemann operator regular. Proving the existence of the desired
perturbation data and keeping track of the choices involved is a
highly non-trivial task, which has been tackled using different approaches,
each with contributions by numerous authors (we will not attempt to
list them here).

In contrast, we will show that if the pair $\left(X,L\right)$ is
a real homogeneous pair, \emph{no perturbations are necessary}, and
the moduli space of stable disc-maps $\overline{\mm}_{0,k,l}\left(X,L,\beta\right)$
can be obtained from the moduli space of closed maps $\overline{\mm}_{0,n}\left(X,A\right)$
through a sequence of simple geometric constructions.

This is an open analog of a result, due to Fulton and Pandharipande
\cite{fulton-pandharipande}, that the moduli space of stable maps
to a convex non-singular projective variety, such as an algebraic
homogeneous space, is a smooth Deligne-Mumford stack whose associated
coarse moduli space is projective. The starting point for our construction
is the corresponding analytical statement, due to Robbin, Ruan and
Salamon \cite[Remark 3.13]{moduli-maps}: if a Kähler manifold $X$
is endowed with a transitive action of a compact Lie group, then the
moduli space $\overline{\mm}_{0,n}\left(X,A\right)$ is a compact,
complex orbifold (without boundary).

We turn to a precise statement of our main result. 
\begin{defn}
\label{def:real homogeneous variety}A \emph{real homogeneous variety
}is a tuple
\[
\left(X,\omega,J,G_{X},\alpha,c_{G},c_{X}\right)
\]
where: 
\begin{itemize}
\item $X$ is a compact Kähler manifold, with symplectic form $\omega$
and an $\omega$-compatible integrable complex structure $J$, 
\item $G_{X}$ is a compact lie group, 
\item $\alpha:G_{X}\times X\to X$ defines a transitive action of $G_{X}$
on $X$ which preserves $\omega$ and $J$, and
\item $c_{G}:G_{X}\to G_{X}$ and $c_{X}:X\to X$ are a pair of involutions
such that $c_{G}$ is a group homomorphism, $c_{X}$ is anti-symplectic
and anti-holomorphic, and ${c_{X}\,\alpha\left(g,x\right)=\alpha\left(c_{G}\,g,c_{X}\,x\right)}$.
\end{itemize}
A \emph{real homogeneous pair} $\left(X,L=X^{\zz/2}\right)$ is
a pair where $X$ is a real homogeneous variety and $L=X^{\zz/2}$
denotes the submanifold of real, or $c_{X}$-fixed, points of $L$. 
\end{defn}
If $\left(X,L\right)$ is a real homogeneous pair, 
\[
i_{L}:L\hookrightarrow X
\]
is a Lagrangian embedding and the action of the $c_{G}$-invariant
subgroup $G=G_{X}^{\zz/2}$ on $X$ preserves $L$. It is not hard
to check that the induced map $\mathfrak{g}:=T_{0}G\to T_{x}L$ is
surjective for every $x\in L$. 

If $k,l$ are non-negative integers, we write
\[
G_{k,l}=G\times\Sym\left(k\right)\times\Sym\left(l\right)
\]
for the product of $G$ with the symmetric groups on $k$ and on $l$
elements. Section \ref{sec:pf moduli as orbifold} is devoted to proving
the following theorem.
\begin{thm}
\label{thm:moduli as orbifold}Let $\left(X,L\right)$ be a real homogeneous
pair. Let $k,l$ be non-negative integers and $\beta\in H_{2}\left(X,L\right)$.
Suppose $\beta\neq0$ or $k+2l\geq3$. Then the moduli space 
\[
\overline{\mm}_{0,k,l}\left(\beta\right)=\overline{\mm}_{0,k,l}\left(X,L,\beta\right)
\]
parameterizing families of stable holomorphic disc-maps of degree
$\beta$ with $k$ boundary marked points and $l$ interior marked
points, is a compact $G_{k,l}$\emph{-orbifold with corners}, admitting
a $G_{k,l}$-equivariant map\emph{ }
\[
\overline{\mm}_{0,k,l}\left(\beta\right)\xrightarrow{f}\overline{\mm}_{0,k+2l}\left(\beta+\overline{\beta}\right).
\]
There's a unique $G_{k,l}$-equivariant map 
\[
\overline{\mm}_{0,k,l}\left(\beta\right)\xrightarrow{\ev}L^{k}\times X^{l}
\]
such that 
\[
\ev_{c}\circ f=\left(i_{L}^{k}\times\left(\left(\id_{X}^{l}\times c_{X}^{l}\right)\circ\Delta_{X^{l}}\right)\right)\circ\ev
\]
. 
\end{thm}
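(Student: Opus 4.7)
The plan is to realize $\overline{\mm}_{0,k,l}(\beta)$ as a finite cover of the fixed locus of a natural anti-holomorphic involution on the closed moduli $\overline{\mm}_{0,k+2l}(X,\beta+\overline{\beta})$, which by the cited result of Robbin-Ruan-Salamon is a compact complex $G_X$-orbifold. Define the involution $\iota$ on $\overline{\mm}_{0,k+2l}(X,\beta+\overline{\beta})$ by
\[
\iota\bigl(C,\{z_i\}_{i=1}^{k+2l},u\bigr)=\bigl(\overline{C},\{\overline{z_{\tau(i)}}\}_{i=1}^{k+2l},c_X\circ u\bigr),
\]
where $\overline{C}$ denotes the domain with conjugate complex structure and $\tau$ fixes $\{1,\ldots,k\}$ while transposing each pair $(k+2j-1,k+2j)$ for $j=1,\ldots,l$. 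The intertwining property relating $c_X$ and $\alpha$ makes $\iota$ an anti-holomorphic involution commuting with the action of $G=G_X^{\zz/2}$ and with $\Sym(k)\times\Sym(l)$.

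\textbf{Disc moduli as a cover of the fixed locus.} A point of $\overline{\mm}^\iota$ is a closed stable map equipped with a real structure; its domain canonically decomposes into real sphere components (each carrying an anti-holomorphic involution whose real locus is a circle) and conjugate pairs of sphere components. A stable disc-map is encoded by such data together with, for each real component, a choice of one of the two hemispheres bounded by the real locus to be the disc side. I would define $\overline{\mm}_{0,k,l}(\beta)\to\overline{\mm}^\iota$ as the finite unramified cover packaging these hemisphere choices, restricted to the union of components on which every domain component meets the real locus.

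\textbf{Orbifold-with-corners structure.} The corners come from local deformation theory of real stable maps. The fixed locus of an anti-holomorphic involution on a complex orbifold is a real-analytic orbifold of the same real dimension. Smoothings of nodes produce the remaining structure: a real node in the double (corresponding to a boundary node of the disc) admits a real smoothing parameter $q\in\rr$, and restricting to the disc side selects $[0,\infty)$, contributing one corner direction per boundary node; a conjugate pair of nodes (an interior node of the disc) is smoothed by a single $q\in\cc$ together with its conjugate at the paired node, giving a smooth $\cc$-direction with no corners. Patching these local models yields the $G_{k,l}$-orbifold-with-corners structure, with corner codimension equal to the number of boundary nodes.

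\textbf{Maps and main obstacle.} The map $f$ is the composition of the cover with the inclusion $\overline{\mm}^\iota\hookrightarrow\overline{\mm}_{0,k+2l}(\beta+\overline{\beta})$, and is tautologically $G_{k,l}$-equivariant. For the evaluation, $\iota$-fixed boundary markings have images in $X^{c_X}=L$, giving the $L^k$ factor; each interior pair $(z_{k+2j-1},z_{k+2j})$ evaluates to some $(x_j,c_X x_j)\in X\times X$, matching the required diagram, and uniqueness is forced by the prescribed commutation with $\ev_c$. The main technical obstacle will be verifying the orbifold-with-corners structure uniformly across all strata, including deeply degenerate ones where several boundary and interior nodes coexist, and handling the interaction between the hemisphere-cover and stabilizer groups of stable maps — in particular, automorphisms that exchange the two hemispheres of a real component, which force identifications in the cover that must be tracked carefully in the orbifold charts.
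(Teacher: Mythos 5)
Your overall strategy (conjugation involution on the closed moduli space, fixed locus, then a choice of hemisphere) is the same as the paper's, but the step where you pass from the fixed locus $\overline{\mm}^{\iota}$ to the disc moduli has a genuine gap. You assert that $\overline{\mm}_{0,k,l}(\beta)\to\overline{\mm}^{\iota}$ is a finite unramified cover packaging the hemisphere choices, restricted to ``the union of components on which every domain component meets the real locus.'' Neither half of this works. First, the number of hemisphere choices is not locally constant on $\overline{\mm}^{\iota}$: a configuration whose double has $r$ real nodes of type H admits $2^{r+1}$ fundamental domains, versus $2$ when the real locus is a smooth circle, so no unramified cover can package them; moreover $\overline{\mm}^{\iota}$ is a boundaryless orbifold (of \emph{half} the ambient real dimension, not ``the same real dimension''), and a finite unramified cover of a boundaryless orbifold is again boundaryless, contradicting the corner structure you then construct on the source. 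Second, the locus of real configurations with empty real locus is \emph{not} a union of connected components of $\overline{\mm}^{\iota}$: the family of conics $x^{2}+y^{2}+\epsilon z^{2}=0$ mapping to $\cc\pp^{1}$ gives a path in the fixed locus that crosses from $\Sigma^{b}\neq\emptyset$ (for $\epsilon<0$) to $\Sigma^{b}=\emptyset$ (for $\epsilon>0$) through an E-type node at $\epsilon=0$, so the unwanted configurations cannot be discarded by restricting to clopen pieces of the fixed locus.

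The missing idea is to first cut $\overline{\mm}^{\iota}$ open along the codimension-one locus of configurations with real nodes --- the paper's hyperplane blowup $B:\widetilde{\mm}\to\overline{\mm}^{\iota}$, locally modeled on the gluing of orthants $2^{r}\times\rr^{n-r}\times[0,\infty)^{r}\to\rr^{n}$. This is where the corners actually enter (your local picture of a real smoothing parameter restricted to $[0,\infty)$ is the right intuition, but it describes a chart on the blowup, not on the fixed locus itself, where the parameter ranges over all of $\rr$); the blowup separates the $\Sigma^{b}=\emptyset$ configurations into genuine clopen components, and it reduces the fundamental-domain ambiguity to a uniform factor of $2$, realized as the unit sections of a fiber-orientation local system. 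Only after this step do the clopen restrictions by degree $\beta$ and by placement of the interior markings produce $\overline{\mm}_{0,k,l}(\beta)$. Your treatment of the evaluation map and your concern about hemisphere-swapping automorphisms are fine as far as they go, but without the blowup the passage from the fixed locus to an orbifold with corners does not go through.
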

Here 
\[
\overline{\mm}_{0,k+2l}\left(\beta+\overline{\beta}\right)=\overline{\mm}_{0,k+2l}\left(X,\beta+\overline{\beta}\right)
\]
denotes the moduli space of stable genus zero maps of class $\beta+\overline{\beta}$
(see (\ref{eq:double homology})) with $k+2l$ marked points, and
$\ev_{c}:\overline{\mm}_{0,k+2l}\left(\beta+\overline{\beta}\right)\to X^{k+2l}$
is the associated evaluation map. The precise meaning of ``a compact
$G_{k,l}$-orbifold with corners'' and ``a $G_{k,l}$-equivariant
map'' is discussed in Section \ref{sec:appendix}.
\begin{example}
For every positive integer $n$, $\left(X,L\right)=\left(\cc\pp^{n},\rr\pp^{n}\right)$
is a real homogeneous pair with $G_{X}=U\left(n+1\right)$ the group
of $\left(n+1\right)\times\left(n+1\right)$ unitary matrices, acting
by the restriction of the standard $GL_{n+1}\left(\cc\right)$ group
action on $\cc\pp^{n}$, and with $c_{G}$ and $c_{X}$ given by
\[
\left[c_{G}\left(A\right)\right]_{i,j}:=\overline{A_{i,j}}
\]
and 
\[
c_{X}\left(\left[z_{0}:\cdots:z_{n}\right]\right):=\left[\overline{z_{0}}:\cdots:\overline{z_{n}}\right].
\]
By Theorem \ref{thm:moduli as orbifold} the moduli spaces $\overline{\mm}_{0,k,l}\left(\cc\pp^{n},\rr\pp^{n},\beta\right)$
are compact orbifolds with corners.
\end{example}
Doing away with perturbations has several technical advantages. For
instance, one can given an elementary construction of the Fukaya $A_{\infty}$
algebra of $L\subset X$ using pullback and pushforward of differential
forms, see Solomon and Tukachinsky \cite{jake+sara-A8}. Moreover,
the $G$-equivariant extension of various constructions is considerably
simpler than if perturbations have to be taken into account, see \cite{equiv-OGW-invts}. 

The close relationship between the moduli spaces of discs and the
moduli space of curves is also quite useful - see for example the
computation of the torus fixed points and their tubular neighborhoods
in \cite{fp-loc-OGW}.

We can summarize the construction of $\overline{\mm}_{0,k,l}\left(\beta\right)$
by the following $G$-equivariant diagram 

\begin{equation}
\overline{\mm}_{0,k,l}\left(\beta\right)\overset{s}{\hookrightarrow}\mm^{::}\xrightarrow{o}\widetilde{\mm}\xrightarrow{B}\overline{\mm}_{0,k+2l}\left(\beta+\overline{\beta}\right)^{\zz/2}\xrightarrow{i}\overline{\mm}_{0,k+2l}\left(\beta+\overline{\beta}\right).\label{eq:open-closed relation}
\end{equation}
The moduli space $\overline{\mm}_{0,k+2l}\left(\beta+\overline{\beta}\right)$
of closed maps is a complex orbifold of the expected dimension, since
the associated Cauchy-Riemann operator is regular at every point.
We denote by $\overline{\mm}_{0,k+2l}\left(\beta+\overline{\beta}\right)^{\zz/2}$
the stacky fixed-points of this space, with respect to an anti-holomorphic
involution which conjugates the map and swaps some of the markings.
One should think of $\overline{\mm}_{0,k+2l}\left(\beta+\overline{\beta}\right)^{\zz/2}$
as parametrizing the \emph{double} of the disc-map; points of $\overline{\mm}_{0,k+2l}\left(\beta+\overline{\beta}\right)^{\zz/2}$
are represented by a stable map $\left(\Sigma\xrightarrow{u}X,...\right)$
together with an anti-holomorphic involution of the domain $b:\Sigma\to\Sigma$,
that swaps some of the marked points and fixes others (see Lemma \ref{lem:Z/2 fps}).
Following Liu \cite[\S 2.3]{LiuModuli} we call the data $\left(\left(\Sigma\xrightarrow{u}X,...\right),b\right)$
a \emph{symmetric configuration}. To halve the double and recover
the disc map we're interested in, we must choose a fundamental domain
$\Sigma^{1/2}\subset\Sigma$ for the involution $b$, whose boundary\footnote{More precisely, we have $\partial\Sigma^{1/2}=\Sigma^{b}$ only at
configurations which do \emph{not} have an E-type node. Such a node
represents the degeneration of $\partial\Sigma$ to a single point,
see more about this below.} is the $b$-fixed points of $\Sigma$:
\[
\Sigma^{1/2}\cap b\left(\Sigma^{1/2}\right)=\Sigma^{b},
\]
see Definition \ref{def:fundamental domains}. Indeed, the space $\mm^{::}$
will parameterize all possible \emph{fundamental} \emph{configurations}
$\left(\left(\Sigma\xrightarrow{u}X,...\right),b,\Sigma^{1/2}\right)$,
and the map $\mm^{::}\xrightarrow{B\circ o}\overline{\mm}_{0,k+2l}\left(\beta+\overline{\beta}\right)^{\zz/2}$
will be the forgetful map 
\[
\left(\left(\Sigma\xrightarrow{u}X,...\right),b,\Sigma^{1/2}\right)\mapsto\left(\left(\Sigma\xrightarrow{u}X,...\right),b\right).
\]
To construct $\mm^{::}$, we must first cut up $\overline{\mm}_{0,k+2l}\left(\beta+\overline{\beta}\right)^{\zz/2}$
along the locus of configurations with \emph{real}, or $b$-invariant,\emph{
}nodes. We call this process a \emph{hyperplane blowup }(see $\S$\ref{subsec:Hyperplane-Blowup}).
It produces the map 
\[
\widetilde{\mm}\xrightarrow{B}\overline{\mm}_{0,k+2l}\left(\beta+\overline{\beta}\right)^{\zz/2}.
\]
The choice of fundamental domain then corresponds to choosing an element
of the map 
\[
\mm^{::}\xrightarrow{o}\widetilde{\mm},
\]
which is a 2-sheeted cover of its image. 

There are two ways to see why we need to introduce the hyperplane
blowup $B$. First, note that sometimes $\Sigma^{b}=\emptyset$, as
in the following example adapted from Liu, \cite[Example 3.6]{LiuModuli}.
For $-1\leq\epsilon\leq1$ the maps
\begin{align*}
\Sigma=\left\{ \left[x:y:z\right]\in\cc\pp^{2}|x^{2}+y^{2}+\epsilon z^{2}=0\right\}  & \to X=\cc\pp^{1}\\
\left[x:y:z\right] & \mapsto\left[x:y\right]
\end{align*}
with the standard conjugation action on the domain and range, define
a path
\[
\gamma:\left[-1,1\right]\to\overline{\mm}_{0,0,0}\left(\cc\pp^{1},2\right)^{\zz/2}.
\]
For $\epsilon>0$ we have $\Sigma^{b}=\emptyset$, which are configurations
we'd like to discard since they're not the double of any disc map.
For $\epsilon<0$ we have a valid configuration, obtained as the
double of a degree two map $\left(D^{2},\partial D^{2}\right)\to\left(\cc\pp^{1},\rr\pp^{1}\right)$.
At $\epsilon=0$ the boundary of the disc map degenerates to a real
node of type E (see Definition \ref{def:types of real nodes}, and
\cite[Definition 3.4]{LiuModuli}). We find that $\gamma|_{\left[-1,0\right]}$
admits a (non-unique) lift to a path in $\overline{\mm}_{0,0,0}\left(\cc\pp^{1},\rr\pp^{1},\left(1,1\right)\right)$,
where the boundary of the disc-map shrinks to $\left[0:0:1\right]\in L$.
In other words, the first reason to blow up is so we can discard unwanted
symmetric topologies (see \cite[\S 2.3]{LiuModuli}) - such configurations
cannot be in the image of the map $\mm^{::}\xrightarrow{B\circ o}\overline{\mm}_{0,k+2l}\left(\beta+\overline{\beta}\right)^{\zz/2}$.

We turn to the second reason to introduce the hyperplane blowup $B$.
 Consider some configurations with some number $r\geq1$ of \emph{type
H} nodes, so $\Sigma^{b}/\nu$ consists of $r+1$ circles glued at
$r$ pairs of points, and the number of possible fundamental domains
(i.e., the size of the fiber of the map $B\circ o$) is $2^{r+1}$.
This also suggests we should cut up the moduli space. Indeed, the
map $B$ is locally modeled on the gluing of orthants: $2^{r}\times\rr^{n-r}\times[0,\infty)^{r}\to\rr^{n}$.
Picking one of the $2^{r}$ inverse images of $0\in\rr^{n}$ amounts
to picking a smoothing direction for the $r$ nodes, and there are
precisely two fundamental domains compatible with every such choice
of smoothing directions. 

With $\widetilde{\mm}$ in place, the map $\mm^{::}\xrightarrow{o}\widetilde{\mm}$
is the composition of a 2-sheeted cover (forgetting the choice of
fundamental domain $\Sigma^{1/2}\subset\Sigma$) with the inclusion
of a clopen component (omitting configurations with $\Sigma^{b}=\emptyset$).

Finally, we restrict to a clopen component $\overline{\mm}_{0,k,l}\left(\beta\right)\overset{s}{\hookrightarrow}\mm^{::}$,
corresponding to those fundamental domains $\Sigma^{1/2}$ such that
(i) $u_{*}\left(\left[\Sigma^{1/2},\Sigma^{b}\right]\right)=\beta$
and (ii) $\Sigma^{1/2}$ contains a specific subset of the $k+2l$
marked points. Put another way, once we restrict to the clopen component
\[
\mm_{\beta}^{::}\hookrightarrow\mm^{::}
\]
of those fundamental domains such that (i) holds, we need to take
the quotient 
\[
\mm_{\beta}^{::}\to\overline{\mm}_{0,k,l}\left(\beta\right)
\]
by the free $\left(\zz/2\right)^{\times l}$ group action where the
$i$'th $\zz/2$ factor acts by swapping the labels of the $k+i$
and $k+l+i$ markings (these two markings should be indistinguishable,
representing the same interior marked point). Instead, we take a section
of this quotient
\[
\overline{\mm}_{0,k,l}\left(\beta\right)\hookrightarrow\mm_{\beta}^{::}
\]
by restricting further the clopen component to include only those
configurations where the fundamental domain contains a particular
representative of each pair $\left\{ k+i,k+l+i\right\} $.

Section \ref{sec:appendix} contains essential constructions and results
related to orbifolds with corners. This section is written with a
view towards subsequent applications, and thus covers significantly
more than is strictly necessary for the proof of Theorem \ref{thm:moduli as orbifold}.
Most notably, we discuss
\begin{itemize}
\item the existence of fibered products in the category of orbifolds with
corners, relying on the work of Joyce \cite{joyce-generalized} on
manifolds with corners.
\item Differential forms, local systems, pushforward and pullback operations.
\item The notion of a hyperplane blowup.
\item Group actions on orbifolds with corners.
\end{itemize}
\textbf{Acknowledgments.} I am deeply grateful to my teacher, Jake
Solomon. For one, it was his suggestion to construct the moduli space
of stable disc maps starting from the moduli space of closed maps.
I was partially supported by ERC starting grant 337560, ISF Grant
1747/13 and NSF grant DMS-1128155.

\section{\label{sec:pf moduli as orbifold}Proof of Theorem \ref{thm:moduli as orbifold} }

\subsection{The groupoid of disc-map configurations}

We begin by reviewing the notion of a stable disc-map and isomorphisms
of such maps. In Liu \cite[\S 5.1]{LiuModuli}, the general notion
of a stable map from a Riemann surfaces with boundary to an arbitrary
pair $\left(X,L\right)$, is given. In general, the moduli spaces
of such maps admit only a Kuranishi structure with corners, see \cite[Theorem 1.2]{LiuModuli}.
In contrast, we will see that the moduli spaces of disc maps to a
real homogeneous pair can be given the structure of an orbifold with
corners. 

Let $\left(X,L\right)$ be a real homogeneous pair as in Definition
\ref{def:real homogeneous variety}. A tuple $\left(k,l,\beta\right)$,
where $k$ and $l$ are non-negative integers and $\beta\in H_{2}\left(X,L\right)$,
is called a \emph{moduli specification }if either $\beta\neq0$ or
$k+2l\geq3$. Fix some moduli specification $\basic=\left(k,l,\beta\right)$.
Consider the groupoid $D_{\basic}$ of \emph{$\left(k,l,\beta\right)$-disc-map
configurations}, whose objects consist of tuples $\left(\Sigma,\kappa,\lambda,\nu,u\right)$,
where
\begin{itemize}
\item $\Sigma$ is a possibly disconnected compact Riemann surface with
boundary and $\nu:\left(\Sigma,\partial\Sigma\right)\to\left(\Sigma,\partial\Sigma\right)$
is an involution with finitely many orbits of size 2. The configuration
is connected and has genus zero. That is to say, each connected component
of $C\subset\Sigma$ is diffeomorphic to $D^{2}$ or to $\cc\pp^{1}$,
and the graph $\left(\pi_{0}\left(\Sigma\right),\nu\right)$ is in
fact a tree.
\end{itemize}
Here 
\begin{equation}
\left(\pi_{0}\left(\Sigma\right),\nu\right)\label{eq:incidence graph}
\end{equation}
is the \emph{incidence graph}, whose vertices are the connected components
of $\Sigma$, and where each orbit of size two $o$ of $\nu$ determines
an edge, incident to the connected components $o$ intersects. We
denote by $\mathring{\Sigma}^{\nu}$ and $\partial\Sigma^{\nu}$ the
fixed points of $\nu$ in the interior and boundary of $\Sigma$,
respectively. These define the \emph{smooth points }of the orbit space
$\Sigma/\nu$. The orbits of size two are called \emph{nodes}.

\begin{itemize}
\item $\kappa:\left[k\right]\hookrightarrow\partial\Sigma^{\nu}$ and $\lambda:\left[l\right]\hookrightarrow\mathring{\Sigma}^{\nu}$
are injective maps.
\end{itemize}
Note that we do \emph{not} assume the points $\im\kappa$ appear in
any particular order around $\partial\Sigma$.
\begin{itemize}
\item $u:\left(\Sigma,\partial\Sigma\right)\to\left(X,L\right)$ is a $\nu$-invariant
holomorphic map with ${u_{*}\left[\Sigma,\partial\Sigma\right]=\beta\in H_{2}\left(X,L\right)}$.
\item For each connected component $C\subset\Sigma$, we have $u_{*}\left[C,\partial C\right]\neq0$
or 
\[
\left(\left|\kappa^{-1}\left(\partial C\right)\right|+\left|\partial C\backslash\Sigma^{\nu}\right|\right)+2\left(\left|\lambda^{-1}\left(C\right)\right|+\left|\mathring{C}\backslash\Sigma^{\nu}\right|\right)\geq3.
\]
\end{itemize}
There's an arrow in $\mathcal{D}_{\basic}$ connecting two objects
$\left(\Sigma,\kappa,\lambda,\nu,u\right)$ and $\left(\Sigma',\kappa',\lambda',\nu',u'\right)$
for every biholomorphism $\phi:\Sigma\to\Sigma'$ preserving all of
the additional structure. The product $G_{\basic}=G\times\Sym\left(k\right)\times\Sym\left(l\right)$
of the compact lie group $G$ with the permutation groups acts on
$\mathcal{D}_{\basic}$ by translating maps and relabeling markings.

We will show that the groupoid $\mathcal{D}_{\basic}$ is equivalent
to a $G_{\basic}$-orbifold with corners $\overline{\mm}_{0,k,l}\left(\beta\right)$.
The construction proceeds from right to left, along the diagram (\ref{eq:open-closed relation}).

\subsection{Complex moduli of closed maps}

Let $n$ be a non-negative integer and $A\in H_{2}\left(X\right)$.
Following \cite{moduli-maps} we consider $\left(n,A\right)$-\emph{rational
configurations.} An $\left(n,A\right)$-\emph{rational configuration}
is a tuple $\left(\Sigma,\nu,\lambda,u\right)$ where $\Sigma$ is
a disjoint union of $\cc\pp^{1}$'s, $\nu$ is an involution with
finitely many orbits of size two, and the incidence graph $\left(\pi_{0}\left(\Sigma\right),\nu\right)$
is required to be a tree. $\lambda$ is an injective map $\left[n\right]\to\Sigma^{\nu}$,
and $u:\Sigma\to X$ is a $\nu$-invariant holomorphic map with $u_{*}\left[\Sigma\right]=A\in H_{2}\left(X\right)$.
Finally, for every connected component $C\subset\Sigma$ if $u_{*}\left[C\right]=0$
then $\left|\left(C\backslash\Sigma^{\nu}\right)\coprod\lambda^{-1}\left(C\right)\right|\geq3$.
The collection of configurations are the objects of a groupoid, in
which there's an arrow $\left(\Sigma,\nu,\lambda,u\right)\to\left(\Sigma',\nu',\lambda',u'\right)$
for every biholomorphism $\phi:\Sigma\to\Sigma'$ respecting all of
the structure. 

By \cite[Remark 3.13]{moduli-maps} if $X$ is a real homogeneous
variety then for any $A\in H_{2}\left(X\right)$ the moduli space
of stable holomorphic maps $\overline{\mm}_{0,n}\left(A\right)$ is
a compact complex orbifold. 

The compact group $G_{X}^{+}=G_{X}\times\Sym\left(n\right)$ acts
on $\overline{\mm}_{0,n}\left(A\right)$ by translating maps and relabeling
markings. This action can be constructed by direct analogy with the
construction of the $\zz/2$-action given in $\S$\ref{subsec:Z/2 action}
below.

In (\ref{eq:open-closed relation}), we've set $n=k+2l$, and taken
$A=\beta+\overline{\beta}\in H_{2}\left(X\right)$, which is defined
as follows. There's a map 
\begin{equation}
H_{2}\left(X,L\right)\ni\beta\mapsto\beta+\overline{\beta}\in H_{2}\left(X\right),\label{eq:double homology}
\end{equation}
which takes a singular chain $\sigma\in C_{2}\left(X\right)$ with
$\partial\sigma\in C_{1}\left(L\right)\subset C_{1}\left(X\right)$
representing $\beta\in H_{2}\left(X,L\right)$ to the homology class
represented by the cycle $\sigma+\left(c_{X}\right)_{*}\sigma$ (recall
$c_{X}:X\to X$ is an antiholomorphic involution which fixes $L$).

\subsection{\label{subsec:Orbifolds-as-Stacks}Orbifolds as Stacks}

We briefly review how one can think about orbifolds in terms of stacks,
following Metzler \cite{metzler} and Pronk \cite{pronk}. See also
Lerman \cite{lerman} for a very pleasant explanation of this approach.
A \emph{smooth stack} $\mathcal{C}$ is a category together with a
functor $\mathcal{C}\xrightarrow{\pi}\text{\textbf{Man}}$ to the
category of smooth manifolds, such that (i) $\mathcal{C}$ is a category
fibered in groupoids over $\text{\textbf{Man}}$ and (ii) a certain
descent condition is satisfied. Given an orbifold without boundary
$\mm$ (which we take to mean an object of the bicategory of fractions
$\text{\textbf{Orb}}_{\partial\neq\emptyset}$ given by a groupoid
in $\text{\textbf{Man}}$; see Remark \ref{rem:complex orbifolds}),
we obtain a stack by taking $\mathcal{C}$ to be the category whose
objects are 1-cells $T\to\mm$ where $T$ is a manifold (without boundary)
and morphisms between $T_{1}\xrightarrow{f_{1}}\mm$ to $T_{2}\xrightarrow{f_{2}}\mm$
are smooth maps $T_{1}\xrightarrow{g}T_{2}$ together with a 2-cell
$f_{1}\Rightarrow f_{2}\circ g$. The functor $\pi$ sends $T\to\mm$
to $T$.

The category of smooth maps $\mm\xrightarrow{}\mm$ is equivalent
to the full subcategory of the category of functors $\mathcal{C}\to\mathcal{C}$,
consisting of those functors that commute with $\pi$. In particular,
a 2-cell $\alpha:c^{2}\Rightarrow\id$ is given simply by a natural
transformation between functors $\cl\to\cl$. 

This perspective is especially useful for thinking about $\overline{\mm}_{0,n}\left(A\right)$.
The category of holomorphic maps $T\to\overline{\mm}_{0,n}\left(A\right)$
is equivalent to the category of \emph{stable families of maps }over
$T$, i.e. stable families of maps of type $\left(0,n,A\right)$ in
the sense of \cite[3.1]{moduli-maps}, which are of the form 
\[
\left(\pi:Q\to T,S_{1},...,S_{n}:T\to Q,H:Q\to X\right).
\]
We recall that this means: (i) $\pi,S_{*},H$ are holomorphic maps,
(ii) $\pi$ is only allowed certain nodal singularities, (iii) for
each $t\in T$ the fiber $\pi^{-1}\left(t\right)$ is a nodal Riemann
surface of genus zero, and $H|_{\pi^{-1}\left(t\right)}$ represents
$A$, (iv) $S_{1},...,S_{n}$ are sections of $\pi$, and (v) $\left(\pi^{-1}\left(t\right),S_{1}\left(t\right),...,S_{n}\left(t\right),H|_{\pi^{-1}\left(t\right)}\right)$
is a stable map. An arrow 
\[
\left(\pi:Q\to T,S_{*}:T\to Q,H:Q\to X\right)\to\left(\pi':Q'\to T,S_{*}':T\to Q',H':Q'\to X\right)
\]
is a holomorphic morphism $Q\to Q'$ which preserves the additional
data (cf. \cite[\S 3.1]{moduli-maps}).

This allows us to define the 1-cells $\overline{\mm}_{0,n}\left(A\right)\to\overline{\mm}_{0,n}\left(A\right)$
and the 2-cells between them simply by describing functorial manipulations
of stable families of maps. We can extend the discussion to include
anti-holomorphic maps (an anti-holomorphic map $T\to\overline{\mm}_{0,n}\left(A\right)$
is just a holomorphic map $\overline{T}\to\overline{\mm}_{0,n}\left(A\right)$,
arrows between such maps are still given by holomorphic morphisms
of families $Q\to Q'$ as above), so we can also consider $\overline{\mm}_{0,n}\left(A\right)$
as a stack over the category of complex manifolds with both holomorphic
and antiholomorphic maps.

\subsection{\label{subsec:Z/2 action}A $\zz/2$-action on $\mm=\overline{\mm}_{0,n}\left(\beta+\overline{\beta}\right)$}

Consider $\mm=\overline{\mm}_{0,n}\left(\beta+\overline{\beta}\right)$
as a complex stack. There's an antiholomorphic map 
\begin{equation}
c_{\mm}:\overline{\mm}_{0,n}\left(\beta+\overline{\beta}\right)\to\overline{\mm}_{0,n}\left(\beta+\overline{\beta}\right)\label{eq:moduli-conj}
\end{equation}
which sends a point $p$ represented by the $\left(n,\beta+\overline{\beta}\right)$-rational
configuration $\left(\Sigma,\nu,\lambda,u\right)$ to a point represented
by 
\[
\overline{\left(\Sigma,\nu,\lambda,u\right)}:=\left(\overline{\Sigma},c_{\Sigma}\circ\nu\circ c_{\Sigma}^{-1},c_{\Sigma}\circ\lambda\circ c_{n}^{-1},c_{X}\circ u\circ c_{\Sigma}^{-1}\right),
\]
with $c_{\Sigma}:\Sigma\to\overline{\Sigma}$ the map which replaces
the almost complex structure $j$ on $\Sigma$ by $-j$, and $c_{n}:\left[n\right]\to\left[n\right]$
the involution 
\begin{equation}
\left(\begin{array}{ccccccccccc}
1 & \cdots & k &  & k+1 & \cdots & k+l &  & k+l+1 & \cdots & k+2l\\
1 & \cdots & k &  & k+l+1 & \cdots & k+2l &  & k+1 & \cdots & k+l
\end{array}\right).\label{eq:markings involution}
\end{equation}
To construct (\ref{eq:moduli-conj}) as a map of stacks, start from
any family of maps 
\[
\left(\pi_{B}:Q\to B,S_{1},...,S_{n}:B\to Q,H_{B}:Q\to X\right)
\]
and map it to the conjugate family of maps 
\begin{equation}
\left(\overline{Q}\xrightarrow{\overline{\pi}_{B}}\overline{B},\left(\overline{S}_{c_{n}\left(i\right)}:\overline{B}\to\overline{Q}\right)_{i=1}^{n},c_{X}\circ\overline{H}_{B}\right).\label{eq:conjugate family}
\end{equation}
This is functorial and thus defines the desired map of stacks $c=c_{\mm}:\mm\to\mm$.
There's a 2-cell 
\begin{equation}
\alpha:c^{2}\Rightarrow\mbox{id}\label{eq:2-cell}
\end{equation}
which is given by the obvious arrow $\overline{\overline{Q}}\to Q$,
considered as a natural transformation between functors-of-families.
It satisfies a coherence condition, which is an equality of two
2-cells $c^{3}\Rightarrow c$. Thus, (\ref{eq:moduli-conj}) and (\ref{eq:2-cell})
define a $\zz/2$-action on $\mm$.

In fact, we will need a bit more than this, namely that the $G_{X}^{+}$
action extends to a $G_{X}^{++}=G_{X}^{+}\rtimes\zz/2$ action on
$\mm$, where the semidirect product is defined using the homomorphism
$\zz/2\to\Aut\left(G_{X}^{+}\right)$ in which the generator acts
by 
\begin{equation}
G_{X}^{+}=G_{X}\times\Sym\left(n\right)\ni\left(\phi,\sigma\right)\mapsto\left(c_{G}\left(\phi\right),c_{n}\circ\sigma\circ c_{n}^{-1}\right).\label{eq:Z/2 action on G_C}
\end{equation}
It is straightforward to construct the data of this group action in
terms of functors and natural transformations of families, as above.
We will focus our attention on the $\zz/2$ action to keep the notation
palatable.

\subsection{The $\zz/2$-fixed points of $\mm$}

Let $\mm^{\zz/2}=\overline{\mm}_{0,n}\left(\beta+\overline{\beta}\right)^{\zz/2}$\textbf{
}denote the $\zz/2<G_{X}^{++}$-stacky fixed points of $\mm$, and
let $G_{+}$ denote the group of elements of $G_{X}^{+}$ that are
fixed under (\ref{eq:Z/2 action on G_C}): 
\begin{equation}
G_{+}=\left(G_{X}^{+}\right)^{\zz/2}=G\times\Sym\left(k\right)\times\Sym\left(l\right)\times\left(\zz/2\right)^{l}.\label{eq:G_+ def}
\end{equation}

\begin{lem}
\label{lem:Z/2 fps}(a) $\mm^{\zz/2}$ is a $G_{+}$-orbifold (without
boundary) with 
\[
\dim_{\rr}\mm^{\zz/2}=\frac{1}{2}\dim_{\rr}\mm
\]
and the map $\mm^{\zz/2}\xrightarrow{i}\mm$ is a proper closed immersion\footnote{See Definition \ref{def:properties of orbi-maps}. We warn the reader
that this is \emph{not }the usual meaning in algebraic geometry; rather
it is a generalization of the notion of closed immersion of manifolds
without boundary in differential geometry. In particular, $i$ will
not be injective in general.}. In particular, $\mm^{\zz/2}$ is compact.

(b) As a groupoid $\mm^{\zz/2}$ is equivalent to the groupoid whose
points are represented by pairs $\left(\left(\Sigma,\nu,\lambda,u\right),b\right)$,
where $\left(\Sigma,\nu,\lambda,u\right)$ is a rational $\left(n,\beta+\overline{\beta}\right)$-configuration
and $b:\left(\Sigma,\nu,\lambda,u\right)\to\overline{\left(\Sigma,\nu,\lambda,u\right)}$
is an arrow in $\mm$ such that $\overline{b}\circ b$ is the identity
map, considered as a biholomorphism $\left(\Sigma,\nu,\lambda,u\right)\to\overline{\overline{\left(\Sigma,\nu,\lambda,u\right)}}$.

\end{lem}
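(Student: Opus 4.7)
The plan has two parts, corresponding to (b) and (a), and relies on the general formalism of group actions on orbifold stacks that is developed in Section~\ref{sec:appendix}.

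For part (b), I would use the general definition of the stacky fixed locus for a group action on a stack. For a $\zz/2$-action $(c,\alpha)$, a $T$-point of $\mm^{\zz/2}$ is by definition a pair $(x,\phi)$ with $x\in\mm(T)$ and $\phi:x\xrightarrow{\sim}c(x)$ an isomorphism satisfying the coherence $\alpha_{x}\circ c(\phi)\circ\phi=\id_{x}$. Applying this with $T=\pt$ and unfolding the functor $c$ from (\ref{eq:conjugate family}) and the 2-cell $\alpha$ from (\ref{eq:2-cell}) immediately yields the description in (b): the object $x$ is a rational $(n,\beta+\overline{\beta})$-configuration, and the isomorphism is a biholomorphism $b$ from it to its conjugate, with the coherence condition $\overline{b}\circ b=\id$ under the natural identification of the double conjugate with the original.

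For part (a), I would produce a local orbifold chart on $\mm^{\zz/2}$ near each fixed point by linearizing the antiholomorphic involution. Fix a representative $((\Sigma,\nu,\lambda,u),b)$ of a point $p\in\mm^{\zz/2}$ and choose a holomorphic orbifold chart $(V,\Gamma,\psi)$ for $\mm$ at $p$, with $V\subset\cc^{N}$ an open neighborhood of the origin, $\Gamma=\Aut(\Sigma,\nu,\lambda,u)$ acting holomorphically on $V$ and fixing $0$, and $N=\dim_{\cc}\mm$. Using the universal family over this chart together with the isomorphism $b$, one obtains an antiholomorphic map $\widetilde{b}:V\to V$ fixing $0$ (well-defined up to composition with $\Gamma$) and satisfying $\widetilde{b}^{2}\in\Gamma$. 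Enlarging the group to $\Gamma'=\Gamma\cup\widetilde{b}\Gamma$ realizes $\widetilde{b}$ as an element of an antiholomorphic extension of order two. A standard Bochner-type averaging argument over the finite group $\Gamma'$ then produces $\Gamma'$-equivariant holomorphic coordinates on a neighborhood of $0$ in which $\Gamma$ acts complex-linearly and $\widetilde{b}$ acts by a complex-antilinear involution; after a further real-linear change of coordinates commuting with $\Gamma$, one may arrange $\widetilde{b}(z)=\bar{z}$, so that the fixed locus is $\rr^{N}\subset\cc^{N}$. This produces an orbifold chart for $\mm^{\zz/2}$ near $p$ of real dimension $N=\tfrac{1}{2}\dim_{\rr}\mm$, and exhibits $i$ locally as the inclusion $\rr^{N}\hookrightarrow\cc^{N}$ equivariant with respect to the relevant finite groups, hence as a closed immersion. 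Compactness and properness follow at once, since the underlying topological space of $\mm^{\zz/2}$ is a closed subspace of the compact space underlying $\mm$. The $G_{+}$-action on $\mm^{\zz/2}$ is inherited from the $G_{X}^{++}$-action on $\mm$: by (\ref{eq:G_+ def}), $G_{+}$ is by definition the centralizer of $\zz/2$ inside $G_{X}^{++}$, so it preserves the fixed-point stack.

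The main technical obstacle I anticipate is performing the $\Gamma'$-equivariant Bochner linearization while simultaneously respecting the complex and the real structures. Since $\Gamma'$ is finite the averaging converges, but one must verify that the linearization can be chosen compatibly with the universal family over $V$ so that the resulting chart agrees, up to 2-isomorphism, with the chart on $\mm^{\zz/2}$ defined stackily via part (b). Once this compatibility is in hand, assembling the charts into the global orbifold structure and verifying the $G_{+}$-equivariance is routine.
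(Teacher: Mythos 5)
Your part (b) is the same as the paper's (which declares it straightforward): unwinding the $T=\pt$ points of the fixed-point stack against (\ref{eq:conjugate family}) and (\ref{eq:2-cell}) yields exactly the pairs $\left(\left(\Sigma,\nu,\lambda,u\right),b\right)$ with $\overline{b}\circ b=\id$. For part (a) you take a genuinely different route. The paper avoids linearizing in orbifold charts altogether: it forms the auxiliary holomorphic atlas $P=M\fibp{a}{c\circ\overline{a}}\overline{M}$, observes that the $\zz/2$-action becomes a \emph{strict} antiholomorphic involution of the manifold $P$ (no isotropy groups in sight), takes the honest fixed-point submanifold $P'$ of half the real dimension, and then verifies that $P'\to\mm^{\zz/2}$ is the 2-pullback of the atlas $M\to\mm$, so that \cite[Proposition 75]{metzler} applies. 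Your approach is the classical local one: lift $c$ to an antiholomorphic map $\widetilde{b}$ of a chart $\left(V,\Gamma\right)$ and linearize equivariantly. Both are viable; the paper's trick buys freedom from the equivariant Bochner step and from checking that the linearized charts assemble into a proper \'etale groupoid, at the cost of the slightly fiddly verification that the square relating $P'$, $M$, $\mm^{\zz/2}$, $\mm$ is 2-cartesian.

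Two points in your write-up need repair. First, you assert that the coarse space of $\mm^{\zz/2}$ is a closed subspace of the coarse space of $\mm$; this contradicts the footnote of the lemma itself, since $i$ is not injective: a single point of $\mm$ can carry several inequivalent arrows $b$ with $\overline{b}\circ b=\id$, giving several points of $\mm^{\zz/2}$ over it. Compactness must instead be deduced as in the paper, from $i$ being a closed immersion with finite essential fibers, hence proper, over a compact target. Second, your linearization step only records $\widetilde{b}^{2}\in\Gamma$, and an antilinear map whose square is a nontrivial linear automorphism need not be conjugate to standard conjugation, nor have a half-dimensional totally real fixed locus (consider $\left(z_{1},z_{2}\right)\mapsto\left(\overline{z_{2}},-\overline{z_{1}}\right)$, whose square is $-\id$ and whose fixed locus is the origin). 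You must invoke the fixed-point coherence (\ref{eq:stacky fp}), i.e.\ $\overline{b}\circ b=\id$, to choose the lift $\widetilde{b}$ that induces precisely the arrow $b$ at the center of the chart; since the groupoid is \'etale, $\widetilde{b}^{2}$ then covers the identity and induces the identity arrow at $0$, hence equals the identity near $0$, and only then does the averaged $\widetilde{b}$ become an antilinear involution with fixed locus a copy of $\rr^{N}$. With those corrections, and the bookkeeping of which subgroup of $\Gamma$ acts on the real slice, your argument goes through.
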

\begin{proof}
We prove part (a). Note that $\mm$ is in fact represented by a proper
étale groupoid $M_{1}\rightrightarrows M_{0}$ where $M_{i}$ are
complex manifolds and the structure maps $s,t,e,i,m$ are local biholomorphisms.
This implies we can consider $\mm$ as a \emph{holomorphic stack},
or stack over the category of complex manifolds with holomorphic maps
(cf. \cite[Definition 28]{metzler}). As a map between holomorphic
stacks, $M=M_{0}\xrightarrow{a}\mm$ is a \emph{holomorphic atlas}.
That is, a map whose domain is equivalent to a complex manifold, and
such that the weak fibered product $W=T\times_{\mm}M_{0}$ with any
other map of holomorphic stacks from a complex manifold $T\to\mm$
is a complex manifold, and the pullback map $W\to T$ is a surjective
local biholomorphism.

We will now use a trick to construct a manifold $P'$ with $\dim P'=\frac{1}{2}\dim\mm$
and an étale covering $P'\xrightarrow{a'}\mm^{\zz/2}$ for $\mm^{\zz/2}$,
in the sense of \cite[Proposition 75]{metzler}. Since the diagonal
$\mm^{\zz/2}\to\mm^{\zz/2}\times\mm^{\zz/2}$ is proper, this will
imply that the conditions of \cite[Proposition 75]{metzler} are met
and $\mm^{\zz/2}$ is a smooth orbifold as claimed.

To construct $P'$, let $\overline{M}$ be the complex conjugate
of $M$, so that $c\circ\overline{a}:\overline{M}\to\mm$ is a also
a holomorphic atlas for $\mm$. Let $P$ be the complex manifold representing
the 2-fiber product 
\[
P=M\fibp{a}{c\circ\overline{a}}\overline{M}.
\]
The map $P\to M\to\mm$ is also a holomorphic atlas. A holomorphic
map $S\to P$, or \emph{$S$-point}, is given by 
\begin{equation}
\left(S\xrightarrow{x}M,S\xrightarrow{y}\overline{M},a\circ x\xRightarrow{b}c\circ\overline{a}\circ y\right)\label{eq:S-point}
\end{equation}
Here $\xRightarrow{b}$ is a morphism of the category $\mm$, between
the objects $a\circ x$ and $c\circ\overline{a}\circ y$. We define
a strict antiholomorphic involution on $P$ by specifying its action
on the $S$-points for all $S$. Namely the involution sends (\ref{eq:S-point})
to
\[
\left(S\xrightarrow{y}M,S\xrightarrow{x}\overline{M},a\circ y\xRightarrow{c\left(b\right)^{-1}\alpha^{-1}}c\circ\overline{a}\circ x\right).
\]
This is an involution since
\[
c\left(\alpha_{y}\right)c^{2}\left(b\right)\alpha_{x}^{-1}=\alpha_{c\left(y\right)}c^{2}\left(b\right)\alpha_{x}^{-1}=b\alpha_{x}\alpha_{x}^{-1}=b.
\]
The first equality is the coherence condition, and the second is naturality
of the transformation $\alpha$. Now let $P'$ denote the fixed-points
of the involution. Using the slice theorem and linear algebra, the
fixed-points of an antiholomorphic involution form a real submanifold
$P'\subset P$ with $\dim P'=\frac{1}{2}\dim P$. It is easy to see
that the $S$-points of $P'$ are pairs, consisting of a map $S\xrightarrow{x}M$
together with an arrow $a\circ x\xrightarrow{b}c\circ a\circ x$,
such that 
\begin{equation}
b=c\left(b\right)^{-1}\alpha^{-1}\label{eq:stacky fp}
\end{equation}
(we do not care about complex structures from this point inwards,
so we write $a$ and not $\overline{a}$, etc.). Eq (\ref{eq:stacky fp})
is the defining relation for a $\zz/2$-fixed point, cf. \cite[Proposition 2.5]{stacky-action}.
In other words, we have a map of smooth stacks $P'\xrightarrow{a'}\mm^{\zz/2}$.
To prove this is an étale covering, it suffices to show that $a'$
is the pullback of the étale covering $a$. More precisely, we claim
that the square
\[
\xymatrix{P'\ar[d]_{a'}\ar[r]^{i'} & M\ar[d]^{a}\\
\mm^{\zz/2}\ar[r]_{i} & \mm
}
\]
is 2-cartesian, where $i'$ is the composition $P'\to P=M\times_{\mm}\overline{M}\to M$.
Indeed, an $S$-point of $\mm^{\zz/2}\fibp{i}{a}M$ is represented
by $\left(\pi,r,g\right)$ where $S\xrightarrow{\pi}\mm^{\zz/2}$
is an $S$-fixed-point, $S\xrightarrow{g}M$ is an $S$-point of $M$,
and $a\,g\xRightarrow{r}i\,\pi$ is an arrow of $\mm$. Let $f:=i\pi$,
so $\pi$ is represented by some arrow $f\xrightarrow{b}c\circ f$
in $\mm$ satisfying (\ref{eq:stacky fp}). Now it is easy to check
that 
\[
a\,g\xrightarrow{c\left(r\right)^{-1}\circ b\circ r}c\left(a\,g\right)
\]
satisfies (\ref{eq:stacky fp}). Since this works for any $S$, we've
constructed an arrow 
\[
\mm^{\zz/2}\fibp{i}{a}M\to P'.
\]
It is now straightforward to construct the reverse arrow $P'\to\mm^{\zz/2}\fibp{i}{a}M$
and show these form an equivalence of stacks.

The map $P'\to M$, hence the map $i$, is seen to be a closed immersion.
Since the fiber of $i$ is finite, it is proper, and so $\mm^{\zz/2}$
is compact. Since $\mm$ was equipped with a $G_{X}^{++}$ action,
$\mm^{\zz/2}$ is equipped with a residual $G_{+}$-action (see
\cite[Remark 2.4]{stacky-action}).

Part (b) is straightforward.
\end{proof}
\begin{defn}
\label{def:fp configuration}We call $\left(\left(\Sigma,\nu,\lambda,u\right),b\right)$
as in part (b) of Lemma \ref{lem:Z/2 fps} a \emph{symmetric configuration. }
\end{defn}
Note a fixed configuration $\left(\left(\Sigma,\nu,\lambda,u\right),b\right)$
defines a ``symmetric Riemann surface with $\left(l,k\right)$ marked
points'' in the sense of \cite[\S 2.2.2]{LiuModuli}.

\subsection{\label{subsec:blowup M tilde}The blowup $\widetilde{\mm}$ of $\mm^{\zz/2}$.}

The next step is to blow up a hyper subset in $\mm^{\zz/2}$ (the
motivation for this was discussed in the introduction). Let $\left(\left(\Sigma,\nu,\lambda,u\right),b\right)$
be a $\zz/2$-fixed configuration\emph{. }Let $o=\left\{ o_{1},o_{2}\right\} $
be a node of the configuration, that is, an orbit of $\nu$ of size
2. We say $o$ is a \emph{real }node if $b\left(o\right)=o$. 
\begin{defn}
\label{def:types of real nodes}A real node $o=\left\{ o_{1},o_{2}\right\} $,
$b\left(o\right)=o$, is of one of two types (cf. \cite[Definition 3.4]{LiuModuli}):
\begin{enumerate}
\item If $b\left(o_{1}\right)=o_{1}$ we say $o$ is a \emph{type H node}. 
\item If $b\left(o_{1}\right)=o_{2}$ we say $o$ is a \emph{type E node}.
 
\end{enumerate}
\end{defn}
Nodes of type H correspond to strip breaking in Floer theory. Nodes
of type E correspond to disc configurations where the boundary has
degenerated to a point. Since we're considering only genus zero configurations,
if $o$ is a type E node it is the only real node.

In $\S$\ref{subsec:orbifold hyper bu} we construct the hyperplane
blowup of an orbifold $\xx=X_{1}\overset{s,t}{\rightrightarrows}X_{0}$
along a hyper subset $E\subset X_{0}$, $s^{-1}E=t^{-1}E$. In our
case, we take $E$ to be the subset of all points represented by configurations
with a real node. 

We now explain why this is a hyper subset. First, we reformulate some
well-known properties of the divisor of nodal configurations\footnote{In algebraic geometry this is often called the ``boundary'' divisor.
This terminology becomes especially confusing in our context, so we
avoid it.}.

Let $B_{n,\beta+\overline{\beta}}$ denote set of all 4-tuples $\left(\lf_{1},A_{1},\lf_{2},A_{2}\right)$
with $\lf_{i}\subset\left[n\right]$ and $A_{i}\in H_{2}\left(X\right)$,
such that: (i) $A_{1}+A_{2}=\beta+\overline{\beta}$, (ii) $\lf_{1}\coprod\lf_{2}=\left[n\right]$,
and (iii) $A_{i}=0\Rightarrow\left|\lf_{i}\right|\geq3$ for $i=1,2$.
We denote 
\begin{equation}
H_{\basic}=\left\{ \left(\lf_{1},A_{1},\lf_{2},A_{2}\right)\in B_{n,\beta+\overline{\beta}}|c_{n}\left(\lf_{i}\right)=\lf_{i}\text{ and }c_{X*}A_{i}=A_{i}\text{ for }i=1,2\right\} \label{eq:H_basic def}
\end{equation}
\begin{equation}
E_{\basic}=\left\{ \left(\lf_{1},A_{1},\lf_{2},A_{2}\right)\in B_{n,\beta+\overline{\beta}}|c_{n}\left(\lf_{i}\right)=\lf_{2-i}\text{ and }c_{X*}A_{i}=A_{2-i}\text{ for }i=1,2\right\} .\label{eq:E_basic def}
\end{equation}

\begin{rem}
\label{rem:E_b =00003D emptyset}Note that $E_{\basic}=\emptyset$
unless $k=0$ and $\beta+\overline{\beta}\in\im\left(\id+c_{X*}\right)$. 
\end{rem}
We have maps between orbifolds without boundary
\[
D_{H_{\basic}}^{\cc}:\left(\coprod_{H_{\basic}}\overline{\mm}_{0,\lf_{1}\coprod\left\{ \sstar_{1}\right\} }\left(A_{1}\right)\times_{X}\overline{\mm}_{0,\lf_{2}\coprod\left\{ \sstar_{2}\right\} }\left(A_{2}\right)\right)_{\zz/2}\to\overline{\mm}_{0,n}\left(\beta+\overline{\beta}\right)
\]
and 
\[
D_{E_{\basic}}^{\cc}:\left(\coprod_{E_{\basic}}\overline{\mm}_{0,\lf_{1}\coprod\left\{ \sstar_{1}\right\} }\left(A_{1}\right)\times_{X}\overline{\mm}_{0,\lf_{2}\coprod\left\{ \sstar_{2}\right\} }\left(A_{2}\right)\right)_{\zz/2}\to\overline{\mm}_{0,n}\left(\beta+\overline{\beta}\right)
\]
defined by gluing. Henceforth, a subscript such as $\lf_{1}\coprod\left\{ \sstar_{1}\right\} $
means we label the markings by the indicated finite set instead of
$\left\{ 1,...,n\right\} $. The fibered products are over the evaluation
maps at $\sstar_{1},\sstar_{2}$, which are transverse (cf. \cite{mcduff+salamon}).
The $\zz/2$ subscript denotes the stacky quotient by the action which
swaps the two factors. The map 
\[
D_{k,l,\beta}^{\cc}=D_{H_{\basic}}^{\cc}\coprod D_{E_{\basic}}^{\cc}
\]
is a faithful, proper, closed immersion with transversal self-intersection
(cf. Definition \ref{def:orbimaps properies 2}). Indeed, the essential
fiber $F=\left(D_{k,l,\beta}^{\cc}\right)^{-1}\left(p\right)$ over
$p=\left[\Sigma,\kappa,\lambda,\nu,u\right]$ is in natural bijection
with the subset of nodes $\left\{ \left\{ a_{i},b_{i}\right\} \right\} _{i=1}^{r}$
of $\nu$ that partition the markings $\left[n\right]$ and the degree
$\beta+\overline{\beta}$ into 4-tuples as in (\ref{eq:H_basic def})
or in (\ref{eq:E_basic def}). For any\emph{ }subset of nodes the
associated map of conormal bundles 
\[
\bigoplus_{i=1}^{r}\left(\mathbb{L}_{a_{i}}^{\vee}\otimes\mathbb{L}_{b_{i}}^{\vee}\right)^{\vee}\to T^{\vee}\overline{\mm}_{0,n}\left(\beta+\overline{\beta}\right)
\]
is injective. Here $\mathbb{L}_{a_{i}}^{\vee}$ denotes the tangent
line to the universal curve at $a_{i}$.

We make $D_{H_{\basic}}^{\cc}$,$D_{E_{\basic}}^{\cc}$, and thus
also $D_{k,l,\beta}^{\cc}$, into $\zz/2$-equivariant maps. $\zz/2$
acts on the domain of $D_{H_{\basic}}^{\cc}$ by the product of conjugation
maps, where we treat $\sstar_{1}$ and $\sstar_{2}$ as fixed markings
(i.e. we identify them with one of the first $k$ markings in (\ref{eq:markings involution})).
The $\zz/2$ action on the domain of $D_{E_{\basic}}^{\cc}$ swaps
the two moduli factors, sending $\sstar_{1}$ to $\sstar_{2}$ and
vice-versa. Passing to $\zz/2$ invariants we obtain a hyper map
(cf. Definition \ref{def:orbimaps properies 2})

\begin{equation}
\mathcal{W}_{\kf,\lf,\beta}=\left(\coprod\overline{\mm}_{0,\lf_{1}\coprod\sstar_{1}}\left(A_{1}\right)^{\zz/2}\times_{L}\mm_{0,\lf_{2}\coprod\sstar_{2}}^{\zz/2}\left(A_{2}\right)\right)_{\zz/2}\xrightarrow{D_{\kf,\lf,\beta}}\overline{\mm}_{0,n}\left(\beta+\overline{\beta}\right)^{\zz/2},\label{eq:real divisor}
\end{equation}
with $\im D_{\kf,\lf,\beta}=E$. We let $B:\widetilde{\mm}\to\mm=\overline{\mm}_{0,n}\left(\beta+\overline{\beta}\right)^{\zz/2}$
denote the hyperplane blowup of $\mm^{\zz/2}$ along $E$. It is a
compact $G_{+}$-orbifold. $\widetilde{\mm}$ is equivalent to the
groupoid of symmetric configurations together with a choice of one
of two possible smoothing directions for each real node (this choice
is equivalent to the choice of ``incident orthant'', see the proof
of Proposition \ref{prop:hyper bu functor ManC})

\subsection{The forgetful map. }

Let $\basic=\left(k,l,\beta\right)$ be a basic moduli specification,
let $\basic_{+}=\left(k+1,l,\beta\right)$, and write $n=k+2l$. The
forgetful map 
\[
\pi^{\cc}:\overline{\mm}_{0,n+1}\left(\beta+\overline{\beta}\right)\to\overline{\mm}_{0,n}\left(\beta+\overline{\beta}\right)
\]
induces a map of fixed points $\pi:\overline{\mm}_{0,n+1}\left(\beta+\overline{\beta}\right)^{\zz/2}\to\overline{\mm}_{0,n}\left(\beta+\overline{\beta}\right)^{\zz/2}$.
Let $\mm_{\text{no }E}^{\zz/2}\subset\overline{\mm}_{0,n}\left(\beta+\overline{\beta}\right)^{\zz/2}$
denote the open suborbifold consisting of symmetric configurations
with no nodes of type E, let $\mm_{\text{no }E,+}^{\zz/2}=\pi^{-1}\left(\mm_{\text{no }E}^{\zz/2}\right)$,
and let $\widetilde{\mm}_{\text{no }E},\widetilde{\mm}_{\text{no }E,+}$denote
the corresponding hyperplane blowups at the loci of configurations
with real nodes.

\begin{lem}
$\pi$ lifts to a b-fibration (see Definition \ref{def:properties of orbi-maps})
\begin{equation}
\tilde{\pi}:\widetilde{\mm}_{\text{no }E,+}\to\widetilde{\mm}_{\text{no }E}.\label{eq:forgetful map}
\end{equation}
\end{lem}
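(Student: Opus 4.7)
My plan is to construct $\tilde\pi$ directly on configurations equipped with a choice of smoothing direction at each real node, then to verify the b-fibration property by local coordinate calculations.

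For the construction, I would first recall that a point of $\widetilde{\mm}_{\text{no }E,+}$ is represented by a symmetric $(n+1)$-configuration $((\Sigma,\nu,\lambda',u),b')$ together with a choice of smoothing direction at each type H real node. The map $\pi$ forgets $p_{n+1}$ and stabilizes, contracting at most one component $C_*$; since $p_{n+1}$ is a real marking, $C_*$ is automatically $b'$-invariant. I would handle three cases, depending on the image of $p_{n+1}$ in the target configuration: (a) a smooth non-special point (no contraction, so real nodes of source and target are in canonical bijection); (b) an existing marking $p_i$ (the bubble $C_*$ carries $p_i$, $p_{n+1}$, and a single real node $q$ which disappears in the target); and (c) an existing real node $q$ (the bubble $C_*$ carries $p_{n+1}$ and two real nodes $q_1, q_2$ that coalesce into $q$). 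In cases (a) and (b) the smoothing directions at surviving nodes transfer verbatim. In case (c) the standard local gluing model is $\{\xi\eta = t_1 t_2\}$, where $t_1, t_2$ are the smoothings of $q_1, q_2$ and $t := t_1 t_2$ is the smoothing of $q$; choosing signs for $t_1, t_2$ canonically determines a sign for $t$, and hence a smoothing direction on the target. Applied in families, this produces a morphism of groupoids, giving $\tilde\pi$.

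To verify that $\tilde\pi$ is a b-fibration, I would exhibit local coordinate models at every point. Using the standard local models of the universal curve over $\overline{\mm}_{0,n}(\beta+\overline{\beta})$ restricted to the real locus, together with the hyperplane blowup construction of $\S$\ref{subsec:orbifold hyper bu}, I expect $\tilde\pi$ to take one of the following forms, after permutation of coordinates:
\begin{enumerate}
\item[(a)] $\rr^{m+1}\times[0,\infty)^{r}\to\rr^{m}\times[0,\infty)^{r}$, projection;
\item[(b)] $\rr^{m}\times[0,\infty)^{r+1}\to\rr^{m}\times[0,\infty)^{r}$, projection;
\item[(c)] $\rr^{m}\times[0,\infty)^{r-1}\times[0,\infty)^{2}\to\rr^{m}\times[0,\infty)^{r-1}\times[0,\infty)$, identity on the first two factors and multiplication $(s_1,s_2)\mapsto s_1 s_2$ on the last.
\end{enumerate}
In cases (a) and (b) this is manifestly a b-submersion; in (b) one boundary hypersurface is mapped into the interior of the target. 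In (c) the multiplication $[0,\infty)^2\to[0,\infty)$ is the paradigmatic b-fibration. From these forms one reads off, at every point, the conditions from Definition \ref{def:properties of orbi-maps}: each boundary hypersurface of the source either maps onto a boundary hypersurface of the target or into the interior, and the map is b-normal.

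The main obstacle I anticipate is the derivation of the local model in case (c). This requires identifying the local deformation space of a genus-zero nodal curve with a three-component chain $C_1 - C_* - C_2$ (with nodes $q_1, q_2$) with the family $\{xy = t_1 t_2\}$ in such a way that $t_1, t_2$ are the smoothings of the two nodes and their product is the smoothing of the combined node after contraction of $C_*$. After restricting to the real locus and blowing up, one obtains the local map $(s_1, s_2)\mapsto s_1 s_2$ from $[0, \infty)^2$ to $[0, \infty)$. Once this local identification is in place, the verification of the b-fibration property is a direct check.
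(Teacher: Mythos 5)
Your proposal is essentially the paper's proof: both check the claim locally by exhibiting coordinate models for $\pi$ near each configuration, with the projection in the no-contraction case and the multiplication $\left(s_{1},s_{2}\right)\mapsto s_{1}s_{2}$ on $[0,\infty)^{2}\to[0,\infty)$ (from the plumbing identity $\xi\eta=t_{1}t_{2}$) in the node-coalescing case being the key b-fibration models. The only divergences are minor: you separate out the case where the bubble is contracted to an existing boundary marking (giving the projection $[0,\infty)^{r+1}\to[0,\infty)^{r}$), which the paper's explicit trichotomy folds in less carefully, while you omit the paper's third case ($C$ contracted to an E-node, where $\left(x_{1},x_{2}\right)\mapsto x_{1}^{2}+x_{2}^{2}$ fails to be smooth) -- that case is excluded by the hypothesis on the codomain, and the paper includes it only to justify that restriction.
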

\begin{proof}
The claim can be checked locally on the domain. Let $\sigma_{+}=\left(\left(\Sigma_{+},\nu_{+},\lambda_{+},u_{+}\right),b_{+}\right)$
and $\sigma=\left(\left(\Sigma,\nu,\lambda,u\right),b\right)$ be
symmetric configurations so that $\pi$ maps
\[
p=\left[\sigma_{+}\right]\in\overline{\mm}_{0,n+1}\left(\beta+\overline{\beta}\right)^{\zz/2}
\]
to
\[
q=\left[\sigma\right]\in\overline{\mm}_{0,n}\left(\beta+\overline{\beta}\right)^{\zz/2}.
\]
This means $\sigma$ is obtained from $\sigma_{+}$ by erasing the
marked point $\lambda_{+}\left(k+1\right)$ and contracting the incident
component $C\subset\Sigma_{+}$ if it becomes unstable. Let $x_{1},...,x_{N+1}$
be local coordinates for $\overline{\mm}_{0,n+1}\left(\beta+\overline{\beta}\right)^{\zz/2}$
centered around $p$ and $y_{1},...,y_{N}$ be local coordinates for
$\overline{\mm}_{0,n}\left(\beta+\overline{\beta}\right)^{\zz/2}$
centered around $q$. There are three cases to consider.
\begin{enumerate}
\item No component is contracted. In this case we can choose the coordinates
so that the germ $\pi_{p}$ is given by 
\[
\left(y_{1},...,y_{N}\right)=\left(x_{2},...,x_{N}\right),
\]
with $x_{2},...,x_{r+1}$ and $y_{1},...,y_{r}$ the smoothing parameters
for the real nodes of $p$ and of $q$, respectively. Clearly this
map lifts to the hyperplane blowup and is a b-fibration.
\item The component $C$ is contracted to a node of type H in $\sigma$.
In this case, we can choose the coordinates so $\pi_{p}$ is given
by 
\begin{equation}
\left(y_{1},...,y_{N}\right)=\left(x_{1}\cdot x_{2},x_{3},...,x_{N}\right),\label{eq:pi_p case (2)}
\end{equation}
with $x_{1},x_{2}$ the smoothing parameters of the two nodes of $\sigma_{+}$
incident to $C$, $y_{1}$ the image H-node, and $x_{3},...,x_{r+1}$
are smoothing parameters for the other nodes of $\sigma_{+}$, with
$y_{2},...,y_{r}$ smoothing parameters for the corresponding nodes
of $\sigma$. Again, this map lifts to the hyperplane blowup. It is
a b-fibration (cf. \cite[Example 4.4(ii)]{joyce-generalized})
\item The component $C$ is contracted to a node of type E in $\sigma$.
In this case, we can choose the coordinates so $\pi_{p}$ is given
by 
\[
\left(y_{1},...,y_{N}\right)=\left(x_{1}^{2}+x_{2}^{2},x_{3},...,x_{N}\right).
\]
Here $x_{1}+ix_{2},x_{1}-ix_{2}$ are the smoothing parameters for
the complex-conjugate nodes incident to $C$. $y_{1}$ is a smoothing
parameter for the E-node. $\sigma,\sigma_{+}$ have no other real
nodes in this case. The map $\rr^{2}\to[0,\infty)$, $\left(x_{1},x_{2}\right)\mapsto\left(x_{1}^{2}+x_{2}^{2}\right)$
is \emph{not }smooth (as a map of manifolds with corners it is only
\emph{weakly smooth}; cf. \cite[Example 2.3(i)]{joyce-generalized}
for a closely related example). This is why we discard such configurations
from the codomain.
\end{enumerate}
\end{proof}

\begin{rem}
In studying the open Gromov-Witten theory of $\left(\cc\pp^{2m},\rr\pp^{2m}\right)$
E-nodes are excluded since the moduli spaces $\overline{\mm}_{0,k,l}\left(\beta\right)$
there either have $k>0$ or else $\beta\in H_{2}\left(X,L\right)=\zz$
is odd, which implies $\beta+\overline{\beta}\not\in\im\left(1+c_{X*}\right)$.
Either way there are no E-type nodes by Remark \ref{rem:E_b =00003D emptyset}.
In other applications, one may be content to consider the forgetful
map as a weakly smooth map only, though this requires a revision of
the category of orbifolds with corners as defined in Section \ref{sec:appendix},
which we will not pursue here. Finally, an $S^{1}$-blowup may be
used to resolve the problem: the map $[0,\infty)\times S^{1}\to[0,\infty)$
\[
\left(r,\theta\right)\mapsto\left(r\cos\theta,r\sin\theta\right)\mapsto r^{2}
\]
is a (smooth) b-fibration.
\end{rem}

\subsection{Picking a fundamental domain and the map $\mm^{::}\xrightarrow{o}\widetilde{\mm}$.}

If $\xx$ is an orbifold with corners, we denote by $\xx^{\circ}$
the interior, or depth zero, points of $\xx$. By construction, $\widetilde{\mm}^{\circ}$
is an open suborbifold of $\mm^{\zz/2}$, and its points are represented
by symmetric configurations $\sigma=\left(\left(\Sigma,...\right),b\right)$
with no additional data. 

Let $\widetilde{\mm}_{\neq\emptyset}\subset\widetilde{\mm}$ denote
the clopen component which is the closure of those points $\widetilde{\mm}^{\circ}$
 represented by a symmetric configuration $\left(\left(\Sigma,\nu,\lambda,u\right),b\right)$
with $\Sigma^{b}\neq\emptyset$. In fact, since there are no real
nodes, we conclude that for $q\in\widetilde{\mm}_{\neq\emptyset}^{\circ}\subset\mm^{\zz/2}$
we must have 
\[
\tilde{\pi}^{-1}\left(q\right)=\Sigma^{b}\simeq S^{1},
\]
which is orientable. Let $\lc$ denote the extension to the boundary
(cf. Lemma \ref{lem:ls extensions} (a)) of the fiber orientation
local system 
\begin{equation}
\left[\tilde{\pi}_{*}\left(\Or\left(T\widetilde{\mm}_{+}\right)\right)\otimes\Or\left(T\widetilde{\mm}\right)^{\vee}\right]|_{\widetilde{\mm}_{\neq\emptyset}^{\circ}}.\label{eq:fiber or ls}
\end{equation}
The unit sections of $\lc$ form a 2-sheeted cover $\mm^{::}\xrightarrow{}\widetilde{\mm}_{\neq\emptyset}$,
and we define $o$ to be the composition
\[
\mm^{::}\to\widetilde{\mm}_{\neq\emptyset}\hookrightarrow\widetilde{\mm}.
\]
We now describe $\mm^{::}$ and $B\circ o$ in terms of fundamental
domains.
\begin{defn}
\label{def:fundamental domains}(a) Let $\sigma=\left(\left(\Sigma,\nu,\lambda,u\right),b\right)$
be a symmetric configuration. A \emph{fundamental domain }$\Sigma^{1/2}$
for $\sigma$ is a subset $\Sigma^{1/2}\subset\Sigma$ biholomorphic
to a disjoint union of $\cc\pp^{1}$'s and $D^{2}$'s such that:

(i) $\Sigma=\Sigma^{1/2}\cup b\left(\Sigma^{1/2}\right)$

(ii) $\Sigma^{1/2}\cap b\left(\Sigma^{1/2}\right)=\Sigma^{b}$

(iii) $\nu|_{\Sigma\backslash\Sigma^{b}}\left(\Sigma^{1/2}\right)\subset\Sigma^{1/2}$
(in other words, $\Sigma^{1/2}$ either contains both sides of each
non-real node, or none of them).

(b) a \emph{fundamental configuration} consists of $\left(\left(\Sigma,\nu,\lambda,u\right),b,\Sigma^{1/2}\right)$
where $\sigma=\left(\left(\Sigma,\nu,\lambda,u\right),b\right)$ is
a symmetric configuration and $\Sigma^{1/2}$ is a fundamental domain
for $\sigma$.

(c) For $i=1,2$ let $\phi_{i}=\left(\left(\Sigma_{i},\nu_{i},\lambda_{i},u_{i}\right),b_{i},\Sigma_{i}^{1/2}\right)$
be fundamental configurations. A \emph{morphism }of fundamental configurations
$\phi_{1}\to\phi_{2}$ is a biholomorphism $\Sigma_{1}\to\Sigma_{2}$
that respects all of the additional data.
\end{defn}
\begin{lem}
$\mm^{::}$ is equivalent to the groupoid of\emph{ }fundamental configurations.
The map $B\circ o$ corresponds to the map
\[
\left(\left(\Sigma,\nu,\lambda,u\right),b,\Sigma^{1/2}\right)\mapsto\left(\left(\Sigma,\nu,\lambda,u\right),b\right).
\]
\end{lem}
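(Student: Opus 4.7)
The plan is to construct an equivalence of groupoids by explicit inverse constructions, working first at an interior point of $\widetilde{\mm}_{\neq\emptyset}^{\circ}$ and then extending across the boundary strata created by the hyperplane blowup $B$.

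At a point $\sigma = ((\Sigma, \nu, \lambda, u), b) \in \widetilde{\mm}_{\neq\emptyset}^{\circ}$, the underlying configuration has no real nodes, and the clopen component condition forces each $b$-fixed component $C$ to satisfy $C^{b} \neq \emptyset$. Since $b$ acts on such a $C \cong \cc\pp^{1}$ as an antiholomorphic involution with nonempty fixed locus, $C^{b}$ is a circle; together with the absence of real nodes and the tree structure of the incidence graph $(\pi_{0}(\Sigma), \nu)$, this implies $\Sigma \setminus \Sigma^{b}$ has exactly two connected components, exchanged by $b$, whose closures are precisely the two fundamental domains for $\sigma$. Each fundamental domain induces an orientation of $\Sigma^{b}$ by regarding it as its oriented boundary. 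On the other hand, the fiber of $\tilde{\pi}$ over $\sigma$ is $\Sigma^{b}$ (the locus where a new $b$-fixed marking can be inserted), and $\lc|_{\widetilde{\mm}_{\neq\emptyset}^{\circ}}$ is by construction the associated fiber-orientation local system, so unit sections of $\lc$ at $\sigma$ are canonically in bijection with orientations of $\Sigma^{b}$. This yields the object-level correspondence at interior points; continuity along holomorphic families is automatic from the groupoid description of $\mm^{\zz/2}$ given in Lemma \ref{lem:Z/2 fps}(b).

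To extend across the boundary, I would use the explicit local model of $B : \widetilde{\mm} \to \mm^{\zz/2}$. Near a configuration with $r \geq 1$ real H-nodes, $B$ is locally the gluing of $2^{r}$ orthants $\rr^{n-r} \times [0,\infty)^{r} \to \rr^{n}$, where the nonnegative coordinates are smoothing parameters for the real nodes. A fundamental configuration $(\sigma, \Sigma^{1/2})$ determines, for each real H-node, the side containing $\Sigma^{1/2}$, and hence singles out one of the $2^{r}$ blowup sheets; on this sheet $\Sigma^{1/2}$ restricts to an interior fundamental domain on nearby smooth configurations, and thereby produces a continuous section of $\lc$ compatible with the extension-to-boundary of Lemma \ref{lem:ls extensions}(a). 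Conversely, a pair (blowup sheet, section of $\lc$) prescribes both the smoothing directions and the orientation of the circles of $\Sigma^{b}$ on nearby smooth fibers, and $\Sigma^{1/2}$ is recovered by taking the closure of the half into which the orientation points. The E-node case is handled analogously, using that such a node is the unique real node of its configuration and that the restriction to $\widetilde{\mm}_{\neq\emptyset}$ already picks out the consistent smoothing direction.

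Functoriality of the correspondence is then essentially formal: an arrow in $\mm^{::}$ projects to an arrow in $\mm^{\zz/2}$, which by Lemma \ref{lem:Z/2 fps}(b) is a biholomorphism of symmetric configurations, and such a biholomorphism transports fundamental domains to fundamental domains while preserving the induced boundary orientations, so it lifts uniquely to a morphism of fundamental configurations. The identification of $B \circ o$ with $((\Sigma,...), b, \Sigma^{1/2}) \mapsto ((\Sigma,...), b)$ is then immediate, since $o$ forgets the section of $\lc$ and $B$ forgets the blowup sheet. The main obstacle in this plan is the local boundary analysis: one must verify in explicit coordinates that the $2^{r}$ sheets of $B$ combined with the $2$ orientation sections of $\lc$ account bijectively for the $2^{r+1}$ fundamental domains at a configuration with $r$ real H-nodes, matching the count anticipated in the introduction. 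Once this local matching is in place, the global equivalence follows by the standard stack/atlas formalism.
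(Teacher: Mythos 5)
Your proposal is correct and follows essentially the same route as the paper: identify the two fundamental domains at an interior point with the two orientations of the circle $\Sigma^{b}=\tilde{\pi}^{-1}(q)$, hence with the unit sections of the fiber-orientation local system $\lc$, and then extend across the boundary via the local orthant model of the hyperplane blowup at real H-nodes, checking that a blowup sheet plus an orientation of the nearby smooth fibers determines a limiting fundamental domain. The step you flag as the ``main obstacle'' (the local matching of $2^{r}\times 2$ with the $2^{r+1}$ fundamental domains) is exactly the computation the paper carries out with the smoothing parameter $y_{1}=x_{1}\cdot x_{2}$ and the orientations of the degenerating hyperbolas.
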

\begin{proof}
Consider first an interior point $\widetilde{\mm}_{\neq\emptyset}^{\circ}$
represented by a symmetric configuration $\sigma=\left(\left(\Sigma,\nu,\lambda,u\right),b\right)$.
It is clear that there are two fundamental domains for $\sigma$ in
this case, and they induce opposite orientations on $\partial\Sigma=\Sigma^{b}\simeq S^{1}$
by the outward normal orientation convention. Clearly, $B\circ o$
is the map that forgets the chosen orientation and corresponding fundamental
domain.

Heuristically, we want to extend this correspondence continuously
to the boundary. To achieve this, we consider how the fundamental
domain changes in continuous families. Recall we constructed a holomorphic
atlas 
\[
P\to\mm=\overline{\mm}_{0,n}\left(\beta+\overline{\beta}\right),\;P=M\fibp{c}{c\circ a}\overline{M},
\]
equipped with an antiholomorphic involution $P\to P$ whose fixed
points form an atlas $P^{\zz/2}\to\mm^{\zz/2}$. The pullback $\widetilde{P}\to\widetilde{\mm}_{\neq\emptyset}$
along $\widetilde{\mm}_{\neq\emptyset}\to\mm^{\zz/2}$ is an atlas
for $\widetilde{\mm}_{\neq\emptyset}$ ($\widetilde{P}$ can also
be obtained by taking a clopen component of a hyperplane blowup of
$P^{\zz/2}$).

Consider a stable family of maps $\left(Q\to P,S_{\star},H\right)$
associated with $P\to\mm$ (cf. $\S$\ref{subsec:Orbifolds-as-Stacks}).
Let $\widetilde{Q}\to\widetilde{P}$ denote the topological pullback
along $\widetilde{P}\to P$. There's an involution $\widetilde{Q}\to\widetilde{Q}$
over $\id_{\widetilde{P}}$. The fiber over each $p\in\widetilde{P}$
is a nodal Riemann surface with an antiholomorphic involution, whose
normalization is a symmetric configuration representing the image
of $p$ in $\mm^{\zz/2}$. We abuse notation and treat the local system
$\lc$ as defined over $\widetilde{P}$, by pulling it back. By definition
(cf. Lemma \ref{lem:ls extensions}) there's a sufficiently small
open neighborhood $p\in U\subset\widetilde{P}$ such that the stalk
$\lc_{p}$ is in bijection with relative orientations for the circle
bundle $\widetilde{Q}^{\widetilde{b}}|_{U^{\circ}}\to U^{\circ}$.
Fix a germ $g\in\lc_{p}$. As we discussed in the first paragraph,
the corresponding relative orientation determines a \emph{family of
nodal fundamental domains} $Q^{1/2}\to U^{\circ}$. That is, $Q^{1/2}$
is the closure of connected component of $\widetilde{Q}|_{U^{\circ}}\backslash\widetilde{Q}^{\widetilde{b}}|_{U^{\circ}}$
such that the normalization $\hat{Q}_{q}^{1/2}$ of the fiber $Q_{q}^{1/2}$
over every point $q\in U^{\circ}$ is the fundamental domain associated
with the specified orientation of $\widetilde{Q}^{\widetilde{b}}|_{q}$.
We define the fundamental domain at $p\in\widetilde{P}$ corresponding
to $g$ to be the normalization of
\[
\overline{Q^{1/2}}\cap Q_{p},
\]
where the closure is taken in $\widetilde{Q}$. We need to check that
this is indeed a fundamental domain for a symmetric configuration
$\sigma=\left(\left(\Sigma,\nu,\lambda,u\right),b\right)$ representing
$B\left(p\right)$. Let us assume $\sigma$ has a single node which
is a real node of type H, the other cases are similar. In this case
the blowup over $B\left(p\right)$ is modeled on the gluing of half-planes
$\left\{ +,-\right\} \times[0,\infty)\times\rr^{N-1}\to\rr^{N}$.
The choice of $p\in\left\{ +,-\right\} \times\left\{ 0\right\} $
determines the direction the parameter $y_{1}$ approaches zero, where
$y_{1}=x_{1}\cdot x_{2}$ is the real smoothing parameter as in (\ref{eq:pi_p case (2)}).
It is easy to see that a consistent orientation for the hyperbolas
for values of $y_{1}$ approaching zero, determines a pair of orientations
for the branches $x_{1}=0$ and $x_{2}=0$ over $y_{1}=0$, and that
the family of fundamental domains converges to a pair of connected
components of $\Sigma\backslash\Sigma^{b}$ near the real node that
specify a fundamental domain for $\sigma$ inducing the given orientations
on the boundary
\end{proof}

\subsection{Admissible disc-configurations and $\overline{\mm}_{0,k,l}\left(\beta\right)\protect\overset{s}{\protect\hookrightarrow}\mm^{::}$}

Note that up until now, the construction only ``knew'' about $\beta+\overline{\beta}\in H_{2}\left(X\right)$.
In general we may have $\beta+\overline{\beta}=\beta'+\overline{\beta'}$
with $\beta\neq\beta'$. For an example of this, take ${\beta=\left(1,0\right)\in H_{2}\left(\cc\pp^{1},\rr\pp^{1}\right)}$
and $\beta'=\overline{\beta}=\left(0,1\right)$. 

Thus, we first restrict to a clopen component 
\[
\mm_{\beta}^{::}\subset\mm^{::}
\]
of points represented by fundamental configurations $\left(\left(\Sigma,\nu,\lambda,u\right),b,\Sigma^{1/2}\right)$
with 
\[
u_{*}\left[\Sigma^{1/2},\partial\Sigma^{1/2}\right]=\beta.
\]
Next, for each $1\leq i\leq l$, we want to identify the markings
$k+i$ and $k+l+i$; in other words, we have
\[
\overline{\mm}_{0,k,l}\left(\beta\right)=\left(\mm_{\beta}^{::}\right)_{\left(\zz/2\right)^{\times l}},
\]
the stacky quotient by the $\left(\zz/2\right)^{\lf}\triangleleft G_{+}$
action on $\mm_{\beta}^{::}$. It is naturally a $G_{k,l}=G_{+}/\left(\zz/2\right)^{l}$
orbifold with corners (see \cite[Remark 2.4]{stacky-action}), and
is clearly equivalent to the groupoid of $\left(k,l,\beta\right)$-disc
configurations.

In fact, it is not hard to see that the map $q$ admits a smooth
$G_{k,l}$-equivariant section $\mm_{\basic}\overset{s}{\hookrightarrow}\mm_{\basic}^{::}$
which we take to be the inclusion of the clopen component corresponding
to fundamental configurations where 
\[
\lambda\left(\left\{ k+1,...,k+2l\right\} \right)\cap\Sigma^{1/2}=\left\{ k+1,...,k+l\right\} .
\]

It is easy to check that there's a unique $G_{k,l}$-equivariant map
$\ev$ such that 
\[
\ev_{c}\circ f=\left(i_{L}^{k}\times\left(\id_{X}^{l}\times c_{X}^{l}\right)\circ\Delta_{X^{l}}\right)\circ\ev
\]
This completes the proof of Theorem \ref{thm:moduli as orbifold}.

\section{\label{sec:appendix}Orbifolds with Corners}

In this section we fix our notion of orbifold with corners, and introduce
some related constructions. Some care is required, since fiber products
in the category of manifolds with corners are somewhat elusive.
We will define the category of orbifolds with corners as the Pronk
2-localization \cite{pronk} of the bicategory of proper étale groupoids
in a category of manifolds with corners, obtained by formally inverting
étale equivalences. Our setup of the category of manifolds with corners
follows Joyce's work \cite{joyce-generalized} closely.

\subsection{Manifolds with corners}

We refer the reader to \cite[\S 2]{joyce-generalized} for the terminology
we use regarding manifolds with corners. The manifolds we'll consider
have ``ordinary'' corners (as opposed to generalized corners), which
are modeled on $\rr_{k}^{n}:=[0,\infty)^{k}\times\rr^{n-k}$. 

A \emph{weakly smooth} map $f:U\to V$ between open subsets $U\subset\rr_{k}^{m}$
and $V\subset\rr_{l}^{n}$ is a continuous map $f=\left(f_{1},...,f_{n}\right)$
such that all the partial derivatives \linebreak{}
${\frac{\partial^{a_{1}+\cdots+a_{m}}}{\partial u_{1}^{a_{1}}\cdots\partial u_{m}^{a_{m}}}f_{j}:U\to\rr}$
exist and are continuous (including one-sided derivatives where applicable). 

An $n$-dimensional \emph{manifold with corners} $X$ is a second
countable Hausdorff space equipped with a maximal $n$-dimensional
atlas of charts $\left(U,\phi\right)$ where $U\subset X$ is open
and $\phi:U\to\rr_{k}^{n}$ is a homeomorphism ($n$ is fixed, $k$
may vary), with weakly smooth transitions. A \emph{weakly smooth map
$f:X\to Y$} between manifolds with corners is a continuous map which
is of this form in every coordinate patch. A weakly smooth map $f:X\to Y$
is said to be \emph{smooth}, \emph{strongly smooth, interior, b-normal,
simple}, or a \emph{b-fibrations} as in \cite[Definitions 2.1, 4.3]{joyce-generalized}.
``A map'' between manifolds with corners will always be assumed
to be smooth unless specifically stated otherwise, and we denote by
$\manc$ the category of manifolds with corners with smooth maps.

The \emph{depth }of a point ${x=\left(x_{1},...,x_{n}\right)\in\rr_{k}^{n}}$
is defined by ${\mbox{depth}\left(x\right)=\#\left\{ 1\leq i\leq k|x_{i}=0\right\} }$.
It is easy to see that the transitions preserve the depth, so we can
speak of the depth of a point $x\in X$. We define $S^{k}\left(X\right)=\left\{ x\in X|\mbox{depth}\left(x\right)=k\right\} $.
A \emph{local k-corner component $\gamma$ }of $X$ at $x$ is a local
choice of connected component of $S^{k}\left(X\right)$ near $x$
(cf. \cite[Definition 2.7]{joyce-generalized}); a local 1-corner
component is also called a \emph{local boundary component}. 

We have manifolds with corners
\[
\partial X=C_{1}\left(X\right)=\left\{ \left(x,\beta\right)|x\in X,\,\mbox{\ensuremath{\beta}\ is a local boundary component of \ensuremath{X}at \ensuremath{x}}\right\} 
\]
and, for every $k\geq0$,
\[
C_{k}\left(X\right)=\left\{ \left(x,\gamma\right)|x\in X,\,\gamma\text{ is a local \ensuremath{k}-corner component of \ensuremath{X} at \ensuremath{x}}\right\} .
\]
Letting $\partial^{k}X$ denote the iterated boundary, we note that
$C_{k}\left(X\right)\simeq\partial^{k}X/\Sym\left(k\right)$ where
$\Sym\left(k\right)$ acts by permuting the local boundary components. 

We can consider $C\left(X\right)=\coprod_{k\geq0}C_{k}\left(X\right)$
as a \emph{local manifold with corners }(or ``manifold with corners
of mixed dimension'', in Joyce's terms). These form a category and
the various properties of maps can be used to describe maps between
local manifolds with corners. If $f:X\to Y$ is a smooth map of manifolds
with corners, there's an induced interior map 
\[
C\left(f\right):C\left(X\right)\to C\left(Y\right)
\]
We denote by $i_{X}^{\partial}:\partial X\to X$ the map defined by
$i_{X}^{\partial}\left(\left(x,\beta\right)\right)=x$. Even if $X$
is connected, $\partial X$ may be disconnected and $i_{X}^{\partial}$
may not be injective. Sometimes we abbreviate $i^{\partial}=i_{X}^{\partial}$. 

A strongly smooth map $f:X\to Y$ between manifolds with corners is
\emph{a submersion }if, whenever $x$ of depth $k$ maps to $y=f\left(x\right)$
of depth $l$, both $df|_{x}:T_{x}X\to T_{y}Y$ and $df|_{x}:T_{x}S^{k}\left(X\right)\to T_{y}S^{l}\left(Y\right)$
are surjective (see \cite[Definition 3.2]{joyce-fibered}; beware
that a ``smooth map'' there is what we call a strongly smooth map,
see \cite[Remark 2.4,(iii)]{joyce-generalized}). We say a map $f:X\to Y$
is \emph{perfectly simple }if it is simple and maps points of depth
$k$ to points of depth $k$, and is \emph{étale }if it is a local
diffeomorphism.

If $X$ is a manifold with corners its tangent bundle $TX$ is defined
in the obvious way. In addition, one can consider the \emph{b-tangent
bundle} $^{b}TX$ . It is a vector bundle on $X$ whose sections can
be identified with sections $v\in C^{\infty}\left(TX\right)$ such
that $v|_{S^{k}\left(X\right)}$ is tangent to $S^{k}\left(X\right)$
for all $k$ (cf. \cite[Definition 2.15]{joyce-generalized}). If
$f:X\to Y$ is an interior map of orbifolds with corners, there's
an induced map $^{b}df:^{b}TX\to^{b}TY$. Two interior maps $f:X\to Z$
and $g:Y\to Z$ are called \emph{b-transverse }if for any $x\in S^{j}\left(X\right),\,y\in S^{k}\left(Y\right)$
such that $f\left(x\right)=g\left(y\right)=z$, the map 
\[
^{b}df\oplus^{b}dg:{}^{b}T_{x}X\oplus^{b}T_{y}Y\to^{b}T_{z}Z
\]
is surjective. 
\begin{rem}
\label{rem:easy b-transversality}In case $\partial Z=\emptyset$,
$f,g$ are b-transverse if and only if for every $x\in S^{j}\left(X\right),y\in S^{k}\left(Y\right)$
with $f\left(x\right)=g\left(y\right)=z$ the map
\[
df|_{TS^{k}\left(X\right)}\oplus dg|_{TS^{l}\left(Y\right)}:TS^{k}\left(X\right)\oplus TS^{l}\left(Y\right)\to T_{z}Z
\]
is surjective.
\end{rem}
\begin{lem}
\label{prop:fibered product in manc}Let $X,Y,Z$ be manifolds with
corners and let $f:X\to Z$ and $g:Y\to Z$ be continuous. Consider
the topological fiber product 
\[
P=X\fibp{f}{g}Y=\left\{ \left(x,y\right)\in X\times Y|f\left(x\right)=g\left(y\right)\right\} .
\]
Suppose at least one of the following conditions holds.

(i) $f$ is a b-normal submersion and $g$ is strongly smooth and
interior,

(ii) $f$ is étale, $g$ is a smooth map,

(iii) $f$ is a b-submersion, $g$ is perfectly simple, or

(iv) $\partial Z=\emptyset$, $f,g$ are b-transverse and smooth.

Then $P$ admits a unique structure of a manifold with corners making
it the fiber product in $\manc$, and we have
\begin{equation}
C_{i}\left(W\right)=\coprod_{j,k,l\geq0;i=j+k-l}C_{j}^{l}\left(X\right)\times_{C_{l}\left(Z\right)}C_{k}^{l}\left(Y\right)\label{eq:corners as fibered prods}
\end{equation}
where $C_{j}^{l}\left(X\right)=C_{j}\left(X\right)\cap C\left(f\right)^{-1}\left(C_{l}\left(Z\right)\right)$
and $C_{k}^{l}\left(Y\right)=C_{k}\left(Y\right)\cap C\left(g\right)^{-1}\left(C_{l}\left(Z\right)\right)$,
and the fiber product is taken over $C\left(f\right),C\left(g\right)$. 

Moreover, if $X\xrightarrow{f}Z$ (respectively, $Y\xrightarrow{g}Z$)
is b-normal then so is $P\xrightarrow{f'}Y$ (resp., $P\xrightarrow{g'}X$).
\end{lem}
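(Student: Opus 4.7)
My plan is to split the proof into four parts: uniqueness of the smooth structure on $P$, existence in each of the four cases, computation of the corner stratification (\ref{eq:corners as fibered prods}), and the b-normality claim. Uniqueness is formal: if $P$ carries two structures of manifold with corners both satisfying the universal property in $\manc$, applying the universal property to the identity map of the underlying topological space in both directions exhibits the two structures as mutually diffeomorphic.

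For existence I would reduce each case to the main b-transverse fibered-product theorem of Joyce in \cite{joyce-generalized} (and its refinements in \cite{joyce-fibered}), which asserts that a b-transverse pair of smooth maps with appropriate corner hypotheses admits a fibered product in $\manc$ whose underlying topological space is the set-theoretic fiber product. Concretely: case (iv) is Joyce's theorem essentially verbatim, using that $\partial Z = \emptyset$ reduces b-transversality to the classical transversality condition recorded in Remark \ref{rem:easy b-transversality}. In case (iii) a b-submersion is automatically b-transverse to any interior map, and perfect simplicity of $g$ provides the additional corner compatibility needed to apply Joyce. Case (ii) is the easiest: an \'etale map admits local smooth inverses, so locally $P$ is diffeomorphic to an open subset of $Y$ via the projection $g'$, which immediately gives the universal property. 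In case (i) a b-normal submersion is b-transverse to any strongly smooth interior map, and b-normality of $f$ prevents the pullback from acquiring spurious corner strata.

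The corner formula (\ref{eq:corners as fibered prods}) I would derive from Joyce's description of the corners functor $C(-)$: he shows that $C(P)$ is canonically the fibered product of $C(X)$ and $C(Y)$ over $C(Z)$ as a local manifold with corners, and decomposing this fibered product by the depths $(j,k,l)$ of the three corner components involved yields the stated formula with $i = j+k-l$; the subtraction records the fact that meeting a depth-$l$ corner of $Z$ transversally cuts the naive corner count $j+k$ down by $l$. Finally, preservation of b-normality for $f'\colon P \to Y$ when $f$ is b-normal is a pointwise check on b-differentials: b-normality can be phrased as surjectivity of a map between corner monoids, and this property descends to the b-transverse fibered product because the local model for $f'$ is a pullback of the local b-normal model for $f$.

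The main obstacle I anticipate is case (i). The hypothesis is asymmetric---$f$ must be a b-normal submersion while $g$ is only strongly smooth and interior---so one cannot directly quote the most symmetric statements in \cite{joyce-fibered}, and the b-normality of $f$ has to carry the full weight of making the corner combinatorics in (\ref{eq:corners as fibered prods}) work out. Handling this will require writing down an explicit local normal form for a b-normal submersion at a corner and verifying that its pullback along an arbitrary strongly smooth interior map has the expected manifold-with-corners structure.
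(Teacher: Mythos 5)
Your overall architecture (uniqueness via the universal property, existence by reduction to Joyce, the corner formula from the corners functor $C(-)$, case (ii) by hand) matches the paper's, but there is a genuine gap in the existence step. Joyce's b-transverse fibered product theorem, \cite[Proposition 4.25, Theorem 4.28]{joyce-generalized}, produces a fibered product in the category $\text{\textbf{Man}}^{gc}$ of manifolds with \emph{generalized} corners, not in $\manc$: even when $X,Y,Z$ have ordinary corners and $f,g$ are b-transverse, the fibered product can acquire generalized corner points (locally modeled on non-free toric monoids), and the whole point of the four separate hypotheses (i)--(iv) is to rule this out. Your proposal treats Joyce's theorem as landing directly in $\manc$ under unspecified ``appropriate corner hypotheses,'' and you flag the need for extra work only in case (i); in fact the same issue is present in cases (iii) and (iv) as well. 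The paper's proof makes the required two-step structure explicit: first, in cases (i), (iii), (iv) the fibered product exists in $\text{\textbf{Man}}^{gc}$ as an embedded submanifold of $X\times Y$ and satisfies (\ref{eq:corners as fibered prods}); second, since the embedded submanifold structure is unique when it exists \cite[Corollary 4.12]{joyce-generalized}, it suffices to verify that this object has only \emph{ordinary} corners, which is supplied by \cite[Theorem 6.4]{joyce-fibered} in cases (i) and (iv) and by \cite[Theorem B]{generalized2ordinary} in case (iii). Without this second step (or an explicit local normal-form verification in each of (i), (iii), (iv), not just (i)), the proof is incomplete.

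Two smaller points. First, your uniqueness argument only shows that any two $\manc$-structures satisfying the universal property are diffeomorphic; the paper instead gets uniqueness of the structure on the fixed underlying set from uniqueness of embedded submanifold structures, which is what is actually needed to splice together the existence results from different sources. Second, for the final b-normality claim your appeal to corner monoids can be replaced by a one-line consequence of the formula you already have: by \cite[Proposition 2.11(c)]{joyce-generalized}, b-normality of $f$ forces $l\leq j$ in every stratum of (\ref{eq:corners as fibered prods}), hence $i=j+k-l\geq k$, so $C(f')$ sends depth-$i$ corners of $P$ to corners of $Y$ of depth at most $i$, which is b-normality of $f'\colon P\to Y$. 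This is the argument the paper uses.
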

\begin{proof}
Let $\text{\textbf{Man}}^{gc}$ denote the category of manifolds with
generalized corners with smooth maps (cf. \cite{joyce-generalized}).
This category contains $\manc$ as a full subcategory. In cases (i),
(iii) and (iv) the fiber product exists in $\text{\textbf{Man}}^{gc}$
as an embedded submanifold of $X\times Y$, and (\ref{eq:corners as fibered prods})
holds, by \cite[Proposition 4.25, Theorem 4.28]{joyce-generalized}.
Since the structure of an embedded submanifold is unique if it exists
\cite[Corollary 4.12]{joyce-generalized}, it suffices to check that
the fiber product is in fact a manifold with ordinary corners. In
cases (i) and (iv) this follows from \cite[Theorem 6.4]{joyce-fibered},
and in case (iii) this follows from \cite[Theorem B]{generalized2ordinary}.

Case (ii) is proven by a simple direct argument.

The last statement follows from \cite[Proposition 2.11(c)]{joyce-generalized}:
if $f$ is b-normal then $l\leq j$ in (\ref{eq:corners as fibered prods}),
which implies $i\leq k$, so $f'$ is b-normal.
\end{proof}

In what follows the discussion diverges from \cite{joyce-generalized}
(see more specifically $\S$4.2 there). More precisely we introduce
a stronger notion of a closed immersion, that has the implicit function
theorem built into it. This is the only kind of closed immersion that
we need to consider, and makes the discussion considerably simpler.
\begin{defn}
\label{def:cl im}A map $f:X\to Y$ of manifolds wtih corners is called
a \emph{closed immersion} if for every $p\in X$ there exists an open
neighborhood $p\in U\subset X$, an open neighborhood $f\left(U\right)\subset V\subset Y$,
and a strongly smooth submersion $h:V\to\rr^{N}$ for some integer
$N\geq0$ such that the following square is cartesian 
\[
\xymatrix{U\ar[r]^{f|_{U}}\ar[d] & V\ar[d]^{h}\\
0\ar[r] & \rr^{N}
}
\]
(it follows that $N=\dim Y-\dim X$). Note the fiber product exists
by Lemma \ref{prop:fibered product in manc} since $h$ is (vacuously)
b-normal, and $0\to\rr^{N}$ is strongly smooth and interior.
\end{defn}
\begin{rem}
\label{rem:b-submersive is enough}Any b-submersion to a manifold
without boundary is automatically a strongly smooth submersion, so
in Definition \ref{def:cl im} it suffices to assume that $h$ is
a b-submersion. 
\end{rem}

\begin{defn}
A map $f:X\to Y$ of manifolds with corners is called \emph{a closed
embedding }if it is a closed immersion, has a closed image, and induces
a homeomorphism on its image.
\end{defn}

\begin{defn}
A map $f:X\to Y$ of manifolds with corners is an \emph{open embedding}
if it is étale and injective.
\end{defn}
\begin{lem}
If $i:X\to Y$ and $f:\mathcal{W}\to\yy$ are smooth maps of manifolds
with corners, with $i$ either a closed or an open embedding, and
if $f\left(W\right)\subset i\left(X\right)$, then there is a unique
smooth map $g:W\to X$ with $f=i\circ g$. If we assume in addition
that $f$ is also a closed or an open embedding, and that $f\left(W\right)=i\left(X\right)$,
then $g$ is a diffeomorphism.
\end{lem}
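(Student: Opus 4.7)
The plan is to reduce everything to the local picture. Uniqueness of $g$ is immediate: any closed or open embedding induces a homeomorphism onto its image, so $i:X\to i(X)$ has a continuous inverse, and the only possible $g$ is the set-theoretic composition $i^{-1}\circ f$. It remains to show this $g$ is smooth.

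For smoothness, I would work locally near an arbitrary point $w\in W$. Set $y=f(w)=i(x)$ with $x\in X$. If $i$ is an open embedding, then $i$ is \'etale and injective, so there is an open neighborhood $x\in U\subset X$ such that $i|_U:U\to i(U)$ is a diffeomorphism onto an open subset of $Y$. Since $f$ is continuous and $f(w)\in i(U)$, a neighborhood $w\in U'\subset W$ maps into $i(U)$, and on $U'$ we have $g=(i|_U)^{-1}\circ f$, which is smooth as a composition of smooth maps. If instead $i$ is a closed embedding, I invoke Definition \ref{def:cl im}: pick neighborhoods $x\in U\subset X$, $i(U)\subset V\subset Y$, and a strongly smooth submersion $h:V\to\rr^N$ such that $i|_U$ is the pullback of $0\hookrightarrow\rr^N$ along $h$. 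Shrinking if needed, we may assume $f^{-1}(V)$ is an open neighborhood of $w$ in $W$. The hypothesis $f(W)\subset i(X)$ together with the fact that $i$ is a topological embedding gives $f(f^{-1}(V))\subset i(U)$, so $h\circ f|_{f^{-1}(V)}=0$. By the universal property of the cartesian square, $f|_{f^{-1}(V)}$ factors uniquely through a smooth map $g:f^{-1}(V)\to U$, and this is the desired local factorization. By uniqueness these local factorizations agree on overlaps and glue to a global smooth $g:W\to X$.

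For the second part, assume $f$ is also a closed or open embedding and $f(W)=i(X)$. Applying the first part twice yields smooth maps $g:W\to X$ with $f=i\circ g$ and $g':X\to W$ with $i=f\circ g'$. Then $i\circ g\circ g'=f\circ g'=i$ and $f\circ g'\circ g=i\circ g=f$; since $i$ and $f$ are injective, $g\circ g'=\id_X$ and $g'\circ g=\id_W$, so $g$ is a diffeomorphism.

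The main obstacle is the closed-embedding case: one must verify that the defining local cartesian square of Definition \ref{def:cl im} really does satisfy the universal property in $\manc$ needed to extract a \emph{smooth} factorization $g$, rather than merely a continuous one. This is handled by Lemma \ref{prop:fibered product in manc}(i), whose hypotheses apply because $h$ is a b-normal submersion (strongly smooth and submersive, and vacuously b-normal since its codomain $\rr^N$ has no boundary) while $0\hookrightarrow\rr^N$ is strongly smooth and interior; hence the cartesian square really is a fibered product in $\manc$, and the universal property furnishes the smooth $g$ locally.
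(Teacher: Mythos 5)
Your proof is correct, but it takes a genuinely different route from the paper. The paper disposes of the lemma in one line by observing that a closed or open embedding in its sense is an embedding in the sense of Joyce and then citing \cite[Corollary 4.11]{joyce-generalized}; the second statement is declared immediate. You instead give a self-contained local argument: uniqueness and the set-theoretic formula $g=i^{-1}\circ f$ come from $i$ being a topological embedding, smoothness in the open case comes from $i$ being an injective local diffeomorphism, and smoothness in the closed case comes from the universal property of the defining cartesian square of Definition \ref{def:cl im}, which is a genuine fiber product in $\manc$ by Lemma \ref{prop:fibered product in manc}(i). The one point that needs care --- and which you do flag with ``shrinking if needed'' --- is that $f(f^{-1}(V))\subset i(X)\cap V$ need not a priori lie in $i(U)=h^{-1}(0)$; one must first shrink $V$ so that $i(X)\cap V=i(U)$, which is exactly where the homeomorphism-onto-image clause of the definition of closed embedding is used. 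Your approach buys independence from Joyce's Corollary 4.11 and makes visible precisely which axioms of the paper's (nonstandard, implicit-function-theorem-flavored) definition of closed immersion are doing the work; the paper's approach is shorter and delegates the bookkeeping to an established reference. Your derivation of the final statement by applying the first part in both directions and cancelling via injectivity is also fine.
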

\begin{proof}
A closed or open embedding is an embedding in the sense of \cite{joyce-generalized},
so this is a special case of Corollary 4.11 \emph{ibid.}. The last
statement is immediate.
\end{proof}
\begin{defn}
(a) Let $f:X\to Y$ be a map of manifolds with corners. We say $f$
is \emph{horizontally submersive }if for every $\tilde{x}\in X$ the
germ $f_{\tilde{x}}$ is isomorphic to the projection $\rr_{k}^{n}\to\rr_{k'}^{n'}$,
\[
\left(x_{1},...,x_{n}\right)\mapsto\left(x_{1},...,x_{k'},x_{k+1},...,x_{k+n'-k'}\right).
\]

(b) Let $f:X\to Y$ be a b-normal map. We call
\[
C_{k}^{hor}\left(X\right):=\left(C\left(f\right)^{-1}\left(C_{0}\left(Y\right)\right)\cap C_{k}\left(X\right)\right)
\]
the \emph{horizontal k-corners }of $X$ with respect to $f$. 
\end{defn}
\begin{lem}
A map $f:X\to Y$ is horizontally submersive if and only if it is
b-normal and the induced map $C_{k}^{hor}\left(X\right)\xrightarrow{C\left(f\right)}Y$is
a submersion for every $k$; that is, 
\[
T_{x}C_{k}^{hor}\left(X\right)\xrightarrow{dC\left(f\right)}T_{y}Y
\]
is surjective for all $x\in C_{k}^{hor}\left(X\right)$.
\end{lem}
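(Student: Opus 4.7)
The plan is to reduce both implications to a local calculation near a point $x\in X$ of depth $d$ mapping to $y=f(x)\in Y$ of depth $d'$. I would fix charts with boundary defining functions $\sigma_{1},\dots,\sigma_{d}$ and interior coordinates $\xi_{1},\dots,\xi_{n-d}$ on $X$, and similarly $\rho_{1},\dots,\rho_{d'},\eta_{1},\dots,\eta_{n'-d'}$ on $Y$. The forward direction is then immediate: in the specified normal form $f$ is literally a coordinate projection, b-normality is visible since each hypersurface $\{\sigma_{i}=0\}$ with $i>d'$ maps into the interior of $Y$, and the horizontal $j$-corners at $x$ correspond exactly to subsets $I\subset\{d'+1,\dots,d\}$ of size $j$, on which the restricted map is again the same projection and hence a submersion.

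For the converse I would first exploit b-normality to write $f^{*}\rho_{i}=v_{i}\prod_{j}\sigma_{j}^{e_{ij}}$ with $v_{i}$ smooth and positive and with the constraint that for each $j$ at most one $i$ has $e_{ij}>0$. Let $H\subset\{1,\dots,d\}$ denote the set of ``horizontal'' indices $j$ with $e_{ij}=0$ for every $i$, so that horizontal corners at $x$ are indexed by subsets of $H$. The key step is to compute the differential of $f^{*}\rho_{i}$ at $x$ restricted to the top horizontal corner $I=H$: since every $\sigma_{j}$ vanishes at $x$, a direct calculation shows that this differential is zero unless precisely one index $j\notin H$ satisfies $e_{ij}>0$ and moreover $e_{ij}=1$, in which case it equals $v_{i}(0)\,d\sigma_{j}$. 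The submersion hypothesis on $C_{|H|}^{hor}(X)\to Y$ requires the image of the differential to contain the pullbacks of the $d\rho_{i}$ for $i\le d'$, so I obtain a well-defined injection $\pi\colon\{1,\dots,d'\}\hookrightarrow\{1,\dots,d\}\setminus H$ with $e_{i\pi(i)}=1$ and no other nonzero exponent for $i\le d'$. A dimension count, using that every $j\notin H$ carries a nonzero $e_{ij}$ for some $i$, then forces $\pi$ to be a bijection and $|H|=d-d'$.

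After reindexing the $\sigma_{j}$ so that $\pi(i)=i$ and replacing $\sigma_{i}$ by $v_{i}\sigma_{i}$ for $i\le d'$, the first $d'$ components of $f$ coincide with the first $d'$ boundary coordinates. Applying the submersion condition at $I=H$ once more yields that the Jacobian $(\partial f_{i}/\partial\xi_{j})$, for $d'<i\le n'$ and $1\le j\le n-d$, has maximal rank $n'-d'$ at $x$, so the implicit function theorem furnishes a further change of interior coordinates replacing the first $n'-d'$ of the $\xi_{j}$ by $f_{d'+1},\dots,f_{n'}$ and puts $f$ into the desired horizontally submersive normal form. The main obstacle is the differential calculation at the top horizontal corner: one must carefully handle products of the form $\prod_{j}\sigma_{j}^{e_{ij}}$ at a point where all $\sigma_{j}$ vanish, in order both to exclude $e_{ij}\ge 2$ and to rule out two simultaneously nonzero exponents contributing to the same $f^{*}\rho_{i}$. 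There is also a small bookkeeping task of verifying that the rescaling $\sigma_{i}\leftarrow v_{i}\sigma_{i}$ and the implicit function theorem step are honest diffeomorphisms of manifolds with corners, which reduces to positivity of the $v_{i}$ and invertibility of the Jacobian blocks supplied by the rank condition.
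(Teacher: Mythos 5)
Your proposal is correct and follows essentially the same route as the paper's proof: the forward direction is read off from the normal form, and for the converse you use b-normality to write the pulled-back boundary defining functions as positive multiples of monomials in the source boundary coordinates, then invoke the submersion hypothesis (the paper uses the instance at $k=0$ and at the deepest stratum, you use the top horizontal corner for both steps) to force the exponent matrix to be a partial permutation matrix and to supply the rank condition on the interior coordinates, finishing with a positive rescaling and the implicit function theorem. The differences are purely organizational, so no further comparison is needed.
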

\begin{proof}
The ``only if'' part is straightforward. Suppose $f$ is b-normal
and $C\left(f\right)|_{C_{k}^{hor}\left(X\right)}$ is a submersion
for all $k$, and let us prove it is horizontally submersive. We have
$C_{0}^{hor}\left(X\right)=C_{0}\left(X\right)$. Fix some $\tilde{x}$
of depth $k$ and suppose $\tilde{y}=f\left(\tilde{x}\right)$ has
depth $k'$. Since $f$ is b-normal and the induced map $f:C_{0}\left(X\right)\to Y$
is a submersion, we find that there's an injective map $\left\{ 1,...,k'\right\} \xrightarrow{\iota}\left\{ 1,...,k\right\} $
such that the germ $f_{\tilde{x}}$ is isomorphic to the map 
\[
\left(x_{1},...,x_{n}\right)\mapsto\left(y_{1},...,y_{n'}\right)
\]
where for each $1\leq j\leq k'$
\[
y_{j}=Y_{j}\left(x_{1},...,x_{n}\right)\cdot x_{\iota\left(j\right)}
\]
for some $\left(0,\infty\right)$-valued smooth functions $Y_{j}$
defined in a neighborhood of $\tilde{x}$.

We also assumed that $f_{\tilde{x}}|_{\left\{ x_{1}=\cdots=x_{k}=0\right\} }$
is a submersion. It follows that there's some subset of the coordinates
$\left(x_{a_{1}},...,x_{a_{n'}}\right)$ such that 
\begin{enumerate}
\item $a_{j}=\iota\left(j\right)$ for $1\leq j\leq k'$, 
\item $k+1\leq a_{j}\leq n$ for $\left(k'+1\right)\leq j\leq n'$, and 
\item $\left(\frac{\partial f_{j}}{\partial x_{a_{j'}}}\right)_{1\leq j,j'\leq n'}$
is invertible.
\end{enumerate}
Let $\left\{ b_{1},...,b_{n''}\right\} $ be some enumeration of the
complement of ${\left\{ a_{1},...,a_{n'}\right\} \subset\left\{ 1,...,n\right\} }$.
The map $\left(f_{1},...,f_{n'},x_{b_{1}},...,x_{b_{n''}}\right)$
is seen to be a local diffeomorphism, and the result follows.
\end{proof}

Suppose now $X,Y$ are manifolds with corners, $f$ is horizontally
submersive with oriented fibers, and let $\omega$ be a compactly
supported differential form on $X$. In this case we can define $f_{*}\omega$
by integration along the fiber.

\subsection{Orbifolds with corners}
\begin{defn}
A groupoid $\left(G_{0},G_{1},s,t,e,i,m\right)$ is a category where
every arrow is invertible. Namely, $G_{0}$ is a class of \emph{points}
and $G_{1}$ is a class of \emph{arrows}. The\emph{ }maps $s,t:G_{1}\to G_{0}$
take an arrow to its \emph{source }and \emph{target }objects, respectively.
The \emph{composition} map ${m:\left\{ \left(f,g\right)\in G_{1}\times G_{1}|t\left(f\right)=s\left(g\right)\right\} \to G_{1}}$
takes a pair of composable arrows to their composition. The \emph{identity
map} $e:G_{0}\to G_{1}$ takes an object to the identity arrow and
the \emph{inverse map} $i:G_{1}\to G_{1}$ takes an arrow to its inverse. 
\end{defn}
The equivalence classes of the equivalence relation ${\im\left(s\times t\right)\subset G_{0}\times G_{0}}$
are called \emph{the orbits} of the groupoid; the class of all orbits
is denoted $G_{0}/G_{1}$. We will use different notations for groupoids,
depending on how much of the structure we want to label:
\[
\left(G_{0},G_{1},s,t,e,i,m\right)=G_{\bullet}=G_{1}\overset{s,t}{\rightrightarrows}G_{0}.
\]

\begin{defn}
\label{def:groupoids in ManC}A groupoid $\left(X_{0},X_{1},s,t,e,i,m\right)$
will be called \emph{étale }if $X_{0},X_{1}$ are objects of $\manc$,
and the maps $s,t,e,i,m$ are all étale (in fact, it suffices to require
that $s:X_{1}\to X_{0}$ is étale). An étale groupoid will be called
\emph{proper }if the map $s\times t:X_{1}\to X_{0}\times X_{0}$ is
proper. We will mostly be interested in proper étale groupoids, or
PEG's for short.

Let $X_{\bullet}$ be a PEG. The set of orbits $X_{0}/X_{1}$, taken
with the quotient topology, forms a locally compact Hausdorff space.
$X_{\bullet}$ is called \emph{compact }if $X_{0}/X_{1}$ is compact. 
\end{defn}
Let $X_{\bullet},Y_{\bullet}$ be two PEG's. A\emph{ smooth functor}
$X_{\bullet}\xrightarrow{F_{\bullet}}Y_{\bullet}$ consists of a pair
of  smooth maps $F_{0}:X_{0}\to Y_{0}$ and $F_{1}:X_{1}\to Y_{1}$
which is a functor between the underlying categories. If $F_{\bullet},G_{\bullet}:X_{\bullet}\to Y_{\bullet}$
are two functors a\emph{ smooth transformation }$\alpha:F_{\bullet}\Rightarrow G_{\bullet}$\textbf{
}is a smooth map $X_{0}\to Y_{1}$ which is a natural transformation
between the underlying functors. In this way we obtain a bicategory
(see \cite{benabou}) $\mathbf{PEG}$, whose objects, or \emph{0-cells},
are proper étale groupoids, morphisms (or \emph{1-cells}) are smooth
functors, and 2-cells are natural transformations. A \emph{refinement
}$R_{\bullet}:X_{\bullet}\to X'_{\bullet}$ is a smooth functor which
is an equivalence of categories and such that $R_{0}$ (hence also
$R_{1}$) is an étale map.
\begin{lem}
As a subset of the 1-cells of $\mathbf{PEG}$ the refinements admit
a right calculus of fractions, in the sense of \cite[\S 2.1]{pronk}.
\end{lem}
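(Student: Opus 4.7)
The plan is to verify the five axioms BF1--BF5 of Pronk's right calculus of fractions for the class $W$ of refinements inside $\mathbf{PEG}$. Axioms BF1 (identities, and more generally equivalences, are refinements) and BF2 (closure under composition of two refinements) are immediate from the definitions: identity smooth functors are étale, and a composition of two étale smooth functors that are each essentially surjective is again étale and essentially surjective.

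The substantive content lies in BF3, the existence of weak pullbacks. Given a smooth functor $F_\bullet : X_\bullet \to Z_\bullet$ and a refinement $R_\bullet : Y_\bullet \to Z_\bullet$, I will construct a PEG $P_\bullet$ together with a refinement $R'_\bullet : P_\bullet \to X_\bullet$, a smooth functor $F'_\bullet : P_\bullet \to Y_\bullet$, and an invertible transformation $\alpha : F_\bullet \circ R'_\bullet \Rightarrow R_\bullet \circ F'_\bullet$. The standard comma construction takes
\[
P_0 \;=\; X_0 \fibp{F_0}{s} Z_1 \fibp{t}{R_0} Y_0,
\]
and defines $P_1$ by an analogous iterated fibered product involving $X_1$, $Y_1$ and two copies of $Z_1$, with the structure maps of $P_\bullet$ induced componentwise. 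Because $s, t : Z_1 \to Z_0$ are étale and $R_0$ is étale (as $R_\bullet$ is a refinement), every fibered product appearing here falls under case (ii) of Lemma \ref{prop:fibered product in manc}, so $P_0$ and $P_1$ acquire canonical structures of objects in $\manc$ and the projection $R'_0 : P_0 \to X_0$ is étale. Essential surjectivity of $R'_\bullet$ is inherited from that of $R_\bullet$, and the natural transformation $\alpha$ is tautologically furnished by the $Z_1$-coordinate of $P_0$. Properness of the orbit-space map $s \times t : P_1 \to P_0 \times P_0$ is checked by observing that it is a pullback of the corresponding map for $X_\bullet$ along the étale proper equivalence $R'_\bullet$.

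For BF4, suppose $\beta : R_\bullet \circ F_\bullet \Rightarrow R_\bullet \circ G_\bullet$ is a transformation of smooth functors $F_\bullet, G_\bullet : X_\bullet \to Y_\bullet$ with $R_\bullet : Y_\bullet \to Z_\bullet$ a refinement. Because $R_\bullet$ is étale and essentially surjective, $R_1$ sits as a $(R_0 \times R_0)$-pullback and is in particular faithful on arrows; this lets me descend $\beta$ uniquely to a transformation $F_\bullet \Rightarrow G_\bullet$ after precomposing with an auxiliary refinement built as in BF3, which yields the required witness. Axiom BF5, the two-out-of-three property for refinements, is verified directly from the pointwise characterisation: if two of the three functors $R_1$, $R_2$, $R_2 \circ R_1$ are étale and essentially surjective, so is the third, since étaleness and essential surjectivity are both stable under the relevant compositions and cancellations in $\manc$.

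The principal obstacle will be BF3. Producing a set-theoretic weak pullback is formal, but it must honestly land in $\mathbf{PEG}$: $P_0, P_1$ must carry the structure of manifolds with corners, the source map $P_1 \to P_0$ must be étale, and the orbit map $s \times t$ must be proper. All three requirements devolve onto the étaleness of $R_0, R_1, s, t$ together with Lemma \ref{prop:fibered product in manc}(ii), which guarantees the required fibered products in $\manc$ without any transversality hypothesis at the corners; this is precisely what makes refinements, rather than a larger class of smooth functors, the natural choice for the left class of the calculus of fractions.
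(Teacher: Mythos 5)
Your approach to the substantive axioms matches the paper's: BF3 is handled by the weak fibered product of proper \'etale groupoids, whose object and arrow spaces are iterated fibered products over the \'etale maps $s,t,R_0$ and hence exist in $\manc$ by the \'etale case of the fibered-product lemma, and BF4 is handled by descending the 2-cell through the cartesian square
\[
\xymatrix{Y_{1}\ar[r]^{R_{1}}\ar[d]_{s\times t} & Y_{1}'\ar[d]^{s'\times t'}\\
Y_{0}\times Y_{0}\ar[r]_{R_{0}\times R_{0}} & Y_{0}'\times Y_{0}'
}
\]
that expresses full faithfulness of a refinement. Two corrections, though. First, your ``auxiliary refinement'' in BF4 is unnecessary: the cartesian square applied to $\alpha:X_0\to Y_1'$ and $f_0\times g_0:X_0\to Y_0\times Y_0$ already produces the smooth map $\beta:X_0\to Y_1$, so one may take $v=\id_{X_\bullet}$; you should also say a word about the uniqueness clause of BF4 (it is discharged trivially, taking $u=v'$, $u'=\id$, $\epsilon=\id$). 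Second, and more seriously, you have misstated BF5. In Pronk's axioms BF5 is \emph{not} a two-out-of-three property; it asserts that the class $W$ is closed under invertible 2-cells, i.e.\ if $w\in W$ and there is an invertible transformation $w\Rightarrow v$ then $v\in W$. What you prove is a different statement, so as written BF5 is not verified. The correct statement is still easy: an invertible transformation $\alpha:w_\bullet\Rightarrow v_\bullet$ is a smooth map $\alpha:X_0\to Y_1$ with $s\circ\alpha=w_0$ \'etale and $s$ \'etale, hence $\alpha$ is \'etale and so is $v_0=t\circ\alpha$; and $v_\bullet$ is an equivalence of categories because it is naturally isomorphic to one. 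With BF5 restated and proved this way, your argument agrees with the paper's.
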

\begin{proof}
We use the notation \emph{ibid}. BF1, BF2 and BF5 are straightforward.
To establish BF3, use the weak fiber product (the construction of
the weak fibered product in $\text{\textbf{PEG}}$ is reviewed in
Lemma \ref{lem:fibp in orb} below). We prove BF4. Suppose $f_{\bullet},g_{\bullet}:X_{\bullet}\to Y_{\bullet}$
are smooth functors and $w_{\bullet}:\left(Y_{1}\overset{s,t}{\rightrightarrows}Y_{0}\right)\to\left(Y_{1}'\overset{s',t'}{\rightrightarrows}Y_{0}'\right)$
is a refinement, and $\alpha:w_{\bullet}\circ f_{\bullet}\Rightarrow w_{\bullet}\circ g_{\bullet}$
is a smooth transformation (here we use lowercase letters to denote
functors, to keep close to the notation in \cite{pronk}). Since the
following square is cartesian 
\[
\xymatrix{Y_{1}\ar[r]^{w_{1}}\ar[d]_{s\times t} & Y_{1}'\ar[d]^{s'\times t'}\\
Y_{0}\times Y_{0}\ar[r]_{w_{0}\times w_{0}} & Y_{0}'\times Y_{0}'
}
\]
the maps $\alpha:X_{0}\to Y_{1}$ and $f_{0}\times g_{0}:X_{0}\to Y_{0}\times Y_{0}$
define the desired $\beta:X_{0}\to Y_{1}$, with $v=\id_{X_{\bullet}}$.
The second requirement holds since all 2-cells are invertible. For
the final requirement, take $u=v',u'=\id$ and $\epsilon=\id$.
\end{proof}
We define the category $\mathbf{Orb}$ of orbifolds (always with corners,
unless specifically mentioned otherwise) to be the 2-localization
of $\mathbf{PEG}$ by the refinements. We usually denote orbifolds
by calligraphic letters $\xx,\yy,\mm$... They are given by proper
étale groupoids. Maps $\xx\to\yy$ are given by fractions $F_{\bullet}|R_{\bullet}$
with $X_{\bullet}\xleftarrow{R_{\bullet}}X_{\bullet}'$ a refinement
and $X'_{\bullet}\xrightarrow{F_{\bullet}}Y_{\bullet}$ a smooth functor.
We refer the reader to \cite{pronk} for further details, including
the definition of the 2-cells, the composition operations, etc.
\begin{rem}
\label{rem:complex orbifolds}We will occasionally consider other
categories of orbifolds. First, there's the category of orbifolds
without boundary $\text{\textbf{Orb}}_{\partial=\emptyset}$. This
can be realized simply as the 2-full bicategory of $\text{\textbf{Orb}}$
spanned by all objects $\xx$ with $\partial\xx=\emptyset$. We will
also encounter the category $\text{\textbf{Orb}}_{\cc}$ of \emph{complex
orbifolds.} To construct it, we begin with the bicategory $\text{\textbf{PEG}}_{\cc}$
whose objects are groupoids $M_{1}\rightrightarrows M_{0}$ where
$M_{i}$, $i=0,1$, is a complex manifold, the structure maps $s,t,e,i,m$
are local biholomorphisms, and $s\times t$ is proper. 1-cells and
2-cells in $\text{\textbf{PEG}}_{\cc}$ are given by holomorphic functors
and holomorphic natural transformations, respectively. To obtain $\text{\textbf{Orb}}_{\cc}$
we invert \emph{holomorphic refinements,} that is, equivalences $\left(R_{0},R_{1}\right)$
where $R_{0}$ is a local biholomorphism. There's an obvious way to
extend $\text{\textbf{Orb}}_{\cc}$ to allow also \emph{anti}-holomorphic
morphisms, where the category of \emph{antiholomorphic }morphisms
$\left(\xx=X_{1}\rightrightarrows X_{0}\right)\to\yy$ is, by definition,
equal to the category of morphisms $\overline{\xx}\to\yy$ in $\text{\textbf{Orb}}_{\cc}$
where $\overline{\xx}=\overline{X}_{1}\rightrightarrows\overline{X}_{0}$.

There are obvious bifunctors 
\[
\text{\textbf{Orb}}_{\cc}\to\text{\textbf{Orb}}_{\cc}^{+}\to\text{\textbf{Orb}}_{\partial=\emptyset}\to\text{\textbf{Orb}}.
\]
\end{rem}
\begin{defn}
\label{def:properties of orbi-maps}We say $f$ is\emph{ strongly-smooth,
étale, interior, b-normal, submersive, b-submersive, horizontally
submersive, simple }or\emph{ perfectly simple }if $F_{0}$ has the
corresponding property as a map of manifolds with corners. It is
easy to check that these properties are preserved by 2-cells (and
thus are properties of the homotopy class of $f$). The map $f$ is
called a\emph{ b-fibration} if it is b-normal and b-submersive (cf.
\cite[Definition 4.3]{joyce-generalized}).

For $i=1,2$ let $f^{i}=F^{i}|R^{i}:\xx^{i}\to\yy$ be an interior
map. We say $f^{1}$ and $f^{2}$ are \emph{b-transverse }if $F_{0}^{1},F_{0}^{2}$
are b-transverse (as maps of manifolds with corners). 
\end{defn}
An equivalence in $\mathbf{Orb}$ is called \emph{a diffeomorphism.}
We say $f=F|R:\xx\to\yy$ is \emph{full}, \emph{essentially surjective},
or \emph{faithful} if $F$ is full, essentially surjective, or faithful,
respectively.

If $\xx=X_{1}\rightrightarrows X_{0}$ is an orbifold with corners,
$\partial\xx=\partial X_{1}\rightrightarrows\partial X_{0}$ is naturally
an orbifold with corners and the smooth functor $\left(i_{X_{1}}^{\partial},i_{X_{0}}^{\partial}\right)$
induces a map $i_{\xx}^{\partial}:\partial\xx\to\xx$. We denote 
\[
i_{\xx}^{\partial^{c}}:=i_{\xx}^{\partial}\circ i_{\partial\xx}^{\partial}\circ\cdots\circ i_{\partial^{c-1}\xx}^{\partial}:\partial^{c}\xx\to\xx.
\]
 Since the maps $s,t,e,i,m$ are étale, they preserve the depth and
we obtain orbifolds with corners
\[
C_{k}\left(\xx\right)=C_{k}\left(X_{1}\right)\rightrightarrows C_{k}\left(X_{0}\right)
\]
for all $k$. A \emph{local orbifold with corners }$\xx=\coprod\xx_{n}$
(or just an $l$-orbifold) is a disjoint union of orbifolds with corners
with $\dim\xx_{n}=n$. It is obvious how to turn this into a category
and extend the definitions of various types of maps to this situation.
If $\xx$ is an orbifold with corners, we can consider $C\left(\xx\right)=\coprod_{k\geq0}C_{k}\left(\xx\right)$
as an l-orbifold. A smooth map $f:\xx\to\yy$ induces an interior
map $C\left(f\right):C\left(\xx\right)\to C\left(\yy\right)$.

We turn to a discussion of the weak fibered product in $\text{\textbf{Orb}}$. 
\begin{lem}
\label{lem:fibp in orb}Let
\begin{equation}
f:\xx\xleftarrow{R}\xx'\xrightarrow{F}\zc\text{ and }g:\yy\xleftarrow{S}\yy'\xrightarrow{G}\zc\label{eq:f and g fracs}
\end{equation}
be two 1-cells in $\text{\textbf{Orb}}$. Suppose at least one of
the following conditions holds.

(i) F is a b-normal submersion and $G$ is strongly smooth and interior,

(ii) $F$ is étale, $G$ is a smooth map,

(iii) $F$ is a b-submersion, $G$ is perfectly simple, or

(iv) $\partial\zc=\emptyset$, $F$ and $G$ are b-transverse (see
Remark \ref{rem:easy b-transversality} for an equivalent condition)
and smooth. 

Then 

(a) The weak fiber product $\mathcal{P}=\xx\fibp{f}{g}\yy$ exists
in $\text{\textbf{Orb}}$. In fact, we can take 
\[
\mathcal{P}=\xx'\fibp{F}{G}\yy'
\]
the weak fiber product in $\text{\textbf{PEG}}$, given by the groupoid
$P_{1}\rightrightarrows P_{0}$ where 
\begin{eqnarray*}
P_{0}=X_{0}'\fibp{F_{0}}{s}Z_{1}\fibp{t}{G_{0}}Y_{0}',\\
P_{1}=X_{1}'\fibp{s\circ F_{1}}{s}Z_{1}\fibp{t\circ G_{1}}{s}Y_{1}'.
\end{eqnarray*}
Here an element of $P_{1}$ specifies the three solid arrows in the
diagram below, 
\[
\xymatrix{x^{1}\ar[d]_{a} & F_{0}\left(x^{1}\right)\ar@{-->}[d]_{F_{1}\left(a\right)}\ar[r] & G_{0}\left(y^{1}\right)\ar@{-->}[d]^{G_{1}\left(b\right)} & y^{1}\ar[d]^{b}\\
x^{2} & F_{0}\left(x^{2}\right)\ar@{-->}[r] & G_{0}\left(y^{2}\right) & y^{2}
}
.
\]
The horizontal dashed arrow is uniquely determined by requiring the
square to be commutative; $s,t:P_{1}\to P_{0}$ are the projections
on the top and bottom rows of the diagram, respectively, and the
other structure maps are computed similarly.

(b) We have
\begin{equation}
C_{i}\left(\mathcal{P}\right)=\coprod_{j,k,l\geq0;i=j+k-l}C_{j}^{l}\left(\xx\right)\times_{C_{l}\left(Z\right)}C_{k}^{l}\left(\yy\right)\label{eq:corners as fibp in orb}
\end{equation}
where $C_{j}^{l}\left(\xx\right)=C_{j}\left(\xx\right)\cap C\left(f\right)^{-1}\left(C_{l}\left(\zc\right)\right)$
and $C_{k}^{l}\left(\yy\right)=C_{k}\left(\yy\right)\cap C\left(g\right)^{-1}\left(C_{l}\left(\zc\right)\right)$,
and the weak fiber product is taken over $C\left(f\right),C\left(g\right)$. 

(c) If we assume, in addition, that $\xx\xrightarrow{f}\zc$ (respectively,
$\yy\xrightarrow{g}\zc$) is b-normal, then so is $\mathcal{P}\xrightarrow{f'}\yy$
(resp., $\mathcal{P}\xrightarrow{g'}\xx$).
\end{lem}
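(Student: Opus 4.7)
The plan is to build $\mathcal{P}$ at the level of $\text{\textbf{PEG}}$ and then descend to $\text{\textbf{Orb}}$. Using the right calculus of fractions on refinements established for $\mathbf{PEG}$, I first absorb $R$ and $S$ and reduce to the case in which $f$ and $g$ are strict functors $F_\bullet : \xx \to \zc$ and $G_\bullet : \yy \to \zc$; the answer for the original diagram \eqref{eq:f and g fracs} is then obtained by post-composing with the refinements.

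Step 1 (Existence as a PEG). I construct $P_0$ as an iterated fibered product in $\manc$,
\[
P_0 = X'_0 \fibp{F_0}{s} Z_1 \fibp{t}{G_0} Y'_0,
\]
by applying Lemma \ref{prop:fibered product in manc} twice. First form $A := X'_0 \times_{Z_0} Z_1$ via case (ii) of the lemma, using that $s$ is étale. The induced projection $A \to Z_0$ obtained by composing with $t$ inherits from $F_0$ the hypothesis relevant to whichever of cases (i)--(iv) we are working in, since étaleness, b-normal submersion, b-submersion, perfect simplicity, b-transversality, strong smoothness, and interiority are all stable under pullback by an étale map (and composition with an étale map, namely $t$). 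Thus the same case of Lemma \ref{prop:fibered product in manc} lets us form $P_0 = A \times_{Z_0} Y'_0$. The arrow space $P_1$ is constructed analogously, replacing the two identities by the structure maps of $X'_\bullet$ and $Y'_\bullet$; the projections $s, t : P_1 \to P_0$ are pullbacks of the étale structure maps of $X'_\bullet, Y'_\bullet$, hence étale, and properness of $(s, t)$ on $P_1$ follows by combining properness of $(s, t)$ on $X'_\bullet$ and $Y'_\bullet$ with properness of $(s, t) : Z_1 \to Z_0 \times Z_0$.

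Step 2 (Universal property). Given a test orbifold $\mathcal{T}$ equipped with 1-cells to $\xx$ and $\yy$ together with a 2-cell between their images in $\zc$, refine $\mathcal{T}$ so these 1-cells are represented by strict functors $a : \mathcal{T} \to \xx'$, $b : \mathcal{T} \to \yy'$, and a natural transformation $\tau : F \circ a \Rightarrow G \circ b$. Unwinding the description of $P_0$ and $P_1$ above, the triple $(a, \tau, b)$ is precisely the data of a smooth functor $\mathcal{T} \to \mathcal{P}$, and any two such factorizations differ by a unique 2-cell. This is the universal property of the weak fiber product in $\mathbf{PEG}$; it descends to $\text{\textbf{Orb}}$ because refinements admit a right calculus of fractions. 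Part (b) then follows by feeding the two fibered product steps that build $P_0$ into the corner formula of Lemma \ref{prop:fibered product in manc} and using that $s, t : Z_1 \to Z_0$ are étale (so the $Z_1$ factor contributes only strata in bijection with those of $Z_0$), collapsing the iterated combinatorics to the formula displayed in (b). Part (c) is immediate from the last assertion of Lemma \ref{prop:fibered product in manc} applied to each of the two steps constructing $P_0$.

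The main obstacle is Step 1: one has to verify that each property appearing in (i)--(iv) is genuinely preserved both under pullback by étale maps and under pre- or post-composition with étale maps, so that Lemma \ref{prop:fibered product in manc} can be iterated. Case (iv), where the hypothesis is b-transversality of the pair $(F_0, G_0)$ rather than a one-sided condition, requires the most care, since after the first fibered product I must re-verify b-transversality of the new pair $(A \to Z_0,\, G_0)$; this follows from Remark \ref{rem:easy b-transversality} and the fact that $A \to Z_0$ is a local diffeomorphism along its fiber over any stratum of $Z_0 = Z_0^\circ$. The remaining cases reduce to essentially mechanical invocations of the stability results in \cite{joyce-generalized}.
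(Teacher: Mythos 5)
Your proposal follows essentially the same route as the paper: both build $P_0$ and $P_1$ by applying Lemma \ref{prop:fibered product in manc}, observe that the resulting proper \'etale groupoid is the weak fiber product in $\text{\textbf{PEG}}$, and then pass to $\text{\textbf{Orb}}$; you merely spell out the iteration (first a fiber product against the \'etale map $s:Z_{1}\to Z_{0}$ via case (ii), then against $G_{0}$ via the relevant case) that the paper dismisses as ``not hard to show,'' which is a worthwhile addition. The one place where you are weaker than the paper is the descent from $\text{\textbf{PEG}}$ to $\text{\textbf{Orb}}$: a right calculus of fractions does \emph{not} by itself guarantee that the 2-localization preserves weak fiber products, and your assertion that ``any two such factorizations differ by a unique 2-cell'' is precisely the delicate point, since factorizations through $\mathcal{P}$ in $\text{\textbf{Orb}}$ are themselves fractions and the required 2-cell must be produced and shown unique in the localized hom-category. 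The paper handles exactly this by citing \cite[Corollary 0.3]{wfibp}; you should either invoke that result or carry out the comparison of fractions explicitly. A smaller inaccuracy: in case (iv), $A\to Z_{0}$ is not a local diffeomorphism --- rather $A\to Z_{1}$ is an \'etale base change of $F_{0}$ and $t$ is \'etale, so the image of ${}^{b}d\left(A\to Z_{0}\right)$ at $\left(x,\zeta\right)$ is $\left(dt\circ ds^{-1}\right)\left(\im{}^{b}dF_{0}|_{x}\right)$; consequently the transversality you must verify involves pairs $\left(x,y\right)$ joined by an arrow $\zeta\in Z_{1}$ with $s\left(\zeta\right)=F_{0}\left(x\right)$ and $t\left(\zeta\right)=G_{0}\left(y\right)$, not only points with $F_{0}\left(x\right)=G_{0}\left(y\right)$. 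This last point is implicitly needed by the paper as well, but your one-line justification does not establish it as written.
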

\begin{proof}
It is well-known (and easy to verify) that the weak fibered product
\[
\mathcal{P}=\xx'\fibp{F}{G}\yy'
\]
in the bicategory of proper étale groupoids \emph{in topological spaces}
is represented by $P_{1}\rightrightarrows P_{0}$ as described above
(see \cite{moerdijk-classifying-groupoids}). Using Proposition \ref{prop:fibered product in manc}
it is not hard to show that $P_{1},P_{0}$ are smooth manifolds with
corners, that the structure maps are étale (i.e., a local \emph{diffeo}morphism),
and that $\mathcal{P}$ represents the weak fibered product in $\text{\textbf{PEG}}$,
the category of proper étale groupoids \emph{in} \emph{manifolds with
corners}. 

It then follows from a result of Tommasini \cite[Corollary 0.3]{wfibp}
that 
\[
\mathcal{P}=\xx\fibp{f}{g}\yy,
\]
the weak fiber product of $f$ and $g$ \emph{in $\text{\textbf{Orb}}$. }

Claims (b) and (c) are straightforward, again using Proposition \ref{prop:fibered product in manc}.
\end{proof}

\begin{defn}
A map $F|R:\xx\to\yy$ of orbifolds with corners is a \emph{closed
immersion} if $F_{0}$ is a closed immersion. In this case, the same
holds for any map homotopic to $F|R$.
\end{defn}

A manifold with corners $M$ specifies an orbifold $\underline{M}=M\rightrightarrows M$
with only identity morphisms, and this extends to a 2-fully-faithful
pseudofunctor $\manc\to\text{\textbf{Orb}}$ (namely, it restricts
to an equivalence $\manc\left(X,Y\right)\simeq\text{\textbf{Orb}}\left(\underline{X},\underline{Y}\right)$
for any pair $X,Y$ of objects of $\manc$). We say an orbifold ``is''
a manifold with corners if it is in the essential image of this functor.
\begin{defn}
Let $\xx$ be an orbifold with corners. \emph{An atlas} \emph{for
$\xx$ }is a map $p:\underline{M}\to\xx$ where $M$ is some manifold
with corners, such that for any other map $f:\underline{N}\to\xx$
from a manifold with corners, $\underline{M}\times_{\xx}\underline{N}$
is a manifold with corners and the projection $\underline{M}\times_{\xx}\underline{N}\overset{p'}{\longrightarrow}\underline{N}$
is étale and surjective (as a map of $\manc$).
\end{defn}
The obvious map $\underline{X_{0}}\to\left(X_{1}\rightrightarrows X_{0}\right)$
is an atlas. Conversely, any atlas $\underline{M}\to\xx$ defines
an orbifold equivalent to $\xx$, whose objects are $\underline{M}$
and morphisms are $\underline{M}\times_{\xx}\underline{M}$.
\begin{defn}
\label{def:closed embedding}A map $f:\xx\to\yy$ of orbifolds with
corners is a \emph{closed} (respectively, \emph{open}) \emph{embedding}
if for some (hence any) atlas $p:\underline{M}\to\yy$, the 2-pullback
$\underline{M}\fibp{p}{f}\xx$ is a manifold with corners  and the
map $\underline{M}\fibp{p}{f}\xx\to\underline{M}$ is a closed (resp.
open) embedding of manifolds with corners.
\end{defn}
If $f:\xx\to\yy$ is a closed embedding we may refer to $\xx$ as
a \emph{suborbifold} of $\yy$.

The notion of a sheaf on an orbifold $\xx$ is the same as the notion
of a sheaf on the underlying topological orbifold, see Moerdijk and
Pronk \cite{moerdijk-classifying-groupoids,pronk} for a comprehensive
treatment. A vector bundle $E$ on an orbifold with corners $\xx=X_{1}\overset{s,t}{\rightrightarrows}X_{0}$
is given by $\left(E_{0},\phi\right)$ where $E_{0}$ is a smooth
vector bundle on $X_{0}$ and 
\[
\phi:s^{*}E_{0}\to t^{*}E_{0}
\]
is an isomorphism satisfying some obvious compatibility requirements
with the groupoid structure. The sections of $\left(E_{0},\phi\right)$
form a sheaf over $\xx$. An important example of a vector bundle
is the tangent bundle, $T\xx=\left(TX_{0},dt\circ ds^{-1}\right)$,
whose sections are vector fields on $\xx$. A \emph{local system }on
an orbifold $\xx$ is a sheaf which is locally isomorphic to the constant
sheaf $\underline{\zz}$. We extend the conventions set forth in \cite[\S1.1, \S 6.1]{twA8}
to proper étale groupoids with corners in the obvious way\footnote{Note there we had to work with $\cc$-valued local systems, but for
the purposes of this paper we can work with $\zz$-valued local systems.}. In particular, for every vector bundle $E$ on $\xx$ there's a
local system $\Or\left(E\right)$ on $\xx$. The \emph{orientation
local system }of $\xx$ is $\Or\left(T\xx\right)$. We have a local
system isomorphism 
\begin{equation}
\iota_{\xx}^{\partial}:\Or\left(T\partial X{}_{\bullet}\right)\to\Or\left(TX_{\bullet}\right)\label{eq:boundary ls iso}
\end{equation}
lying over $i_{X_{\bullet}}^{\partial}:\partial X_{\bullet}\hookrightarrow X_{\bullet}$,
defined by appending the outward normal to the boundary at the beginning
of the oriented base for $T\partial X_{\bullet}$. Given a short exact
sequence of vector bundles
\[
0\to E_{1}\xrightarrow{f}E\xrightarrow{q}E_{2}\to0
\]
on $\xx$, we obtain a local system isomorphism 
\begin{equation}
\Or\left(E_{1}\right)\otimes\Or\left(E_{2}\right)\to\Or\left(E\right),\label{eq:ls map from ses}
\end{equation}
which, using oriented bases to represent orientation, can be expressed
by
\[
\left[e_{1}^{1},...,e_{1}^{n_{1}}\right]\otimes\left[e_{2}^{1},...,e_{2}^{n_{2}}\right]\mapsto\left[f\left(e_{1}^{1}\right),...,f\left(e_{1}^{n_{1}}\right),g\left(e_{2}^{1}\right),...,g\left(e_{2}^{n_{2}}\right)\right]
\]
where $g:E_{2}\to E$ is any local section of $q$.

Maps of local systems are always assumed to be cartesian, so to specify
a local system map ${\lc_{1}\xrightarrow{\ff}\lc_{2}}$ over ${\xx_{1}\xrightarrow{f}\xx_{2}}$
is equivalent to giving an isomorphism $\lc_{1}\to f^{-1}\lc_{2}$.
\begin{lem}
\label{lem:ls extensions}Let $\xx$ be an orbifold with corners.
We denote by $\mathring{\xx}:=S^{0}\left(\xx\right)$ the orbifold
(without boundary or corners) consisting of points of depth zero,
and by $j:\mathring{\xx}\hookrightarrow\xx$ the inclusion. 

(a) The pushforward and inverse image functors $j_{*},j^{-1}$ form
an adjoint equivalence of groupoids between local systems on $\mathring{\xx}$
and local systems on $\xx$.

(b) $Or\left(dj\right):Or\left(T\mathring{\xx}\right)\to j^{-1}Or\left(T\xx\right)$
is an isomorphism.

Let $f:\xx\to\yy$ be a b-normal map of orbifolds with corners. 

(c) There exists a unique map $\mathring{f}:\mathring{\xx}\to\mathring{\yy}$
with $f\circ j_{\xx}=j_{\yy}\circ\mathring{f}$.

Let $\lc$ be a local system on $\xx$ and let $\lc'$ be a local
system on $\yy$, and denote by $\mathring{\lc}=j_{\xx}^{-1}\lc$,
$\mathring{\lc}':=j_{\yy}^{-1}\lc'$ their restrictions to $\mathring{\xx},\mathring{\yy}$,
respectively. Define a map taking a map of sheaves $\ff:\lc\to\lc'$
over $f$ to the map $\mathring{\ff}:\mathring{\lc}\to\mathring{\lc}'$
over $\mathring{f}$ given by the composition 
\[
j_{\xx}^{-1}\lc\overset{j_{\xx}^{-1}\ff}{\longrightarrow}j_{\xx}^{-1}f^{-1}\lc'\simeq\mathring{f}^{-1}j_{\yy}^{-1}\lc'.
\]

(d) $\ff\mapsto\mathring{\ff}$ is a bijection
\[
\left\{ \mbox{maps }\ff:\lc\to\lc'\mbox{ over }f\right\} \simeq\left\{ \mbox{maps }\mathring{\ff}:\mathring{\lc}\to\mathring{\lc}'\mbox{ over }\mathring{f}\right\} .
\]
and together with $\lc\mapsto\mathring{\lc}$ forms a functor from
the category of sheaves (respectively, local systems) over orbifolds
with corners with b-normal maps to the category of sheaves (resp.
local systems) over orbifolds without boundary.
\end{lem}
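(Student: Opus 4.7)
The strategy is to reduce everything to a local analysis on the underlying manifold with corners $X_0$ of a proper étale groupoid $\xx = X_1 \rightrightarrows X_0$ presenting the orbifold. For part (a), the counit $j^{-1} j_* \Rightarrow \id$ is an isomorphism because $j$ is an open inclusion, so the content is to prove the unit $\eta : \lc \to j_* j^{-1} \lc$ is a stalkwise isomorphism. At a depth-$k$ point $x \in X_0$ with chart $\phi : U \to \rr_k^n$, take the neighborhood basis $V_\epsilon = \phi^{-1}([0,\epsilon)^k \times (-\epsilon,\epsilon)^{n-k})$; then $V_\epsilon \cap S^0(X_0)$ is identified with $(0,\epsilon)^k \times (-\epsilon,\epsilon)^{n-k}$, which is connected and contractible, so $(j_* j^{-1} \lc)_x = \varinjlim_\epsilon \lc(V_\epsilon \cap S^0(X_0))$ is canonically $\lc_x$ because $\lc$ is locally constant. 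The equivariant structure with respect to $X_1 \rightrightarrows X_0$ carries over under $j_*$ because $s,t$ are étale and therefore preserve depth, giving the claimed equivalence.

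Parts (b) and (c) follow quickly from the setup. For (b), since $\mathring{\xx} = S^0(\xx)$ is by construction an open sub-orbifold of $\xx$, the tangent bundle $T\mathring{\xx}$ is tautologically $j^{-1} T\xx$ and $dj$ is the identity map of vector bundles, so $\Or(dj)$ is the identity. For (c), b-normality in Joyce's framework forces each boundary-defining function of $\yy$ near $f(x)$ to pull back to a single boundary-defining function of $\xx$ times a positive function; if $f(x)$ had depth $\geq 1$, some such boundary-defining function would have to vanish at an interior point $x$, contradicting $x \in \mathring{\xx}$. Hence $f(\mathring{\xx}) \subseteq \mathring{\yy}$ and one sets $\mathring{f} := f|_{\mathring{\xx}}$; uniqueness is immediate since $j_\yy$ is monic as a map of orbifolds.

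For part (d), the plan is to obtain the bijection as an adjoint equivalence using part (a). Given $\ff : \lc \to f^{-1} \lc'$ over $f$, the functorial identification
\[
j_\xx^{-1} f^{-1} \lc' = (f \circ j_\xx)^{-1} \lc' = (j_\yy \circ \mathring{f})^{-1} \lc' = \mathring{f}^{-1} j_\yy^{-1} \lc' = \mathring{f}^{-1} \mathring{\lc}'
\]
allows one to define $\mathring{\ff} := j_\xx^{-1} \ff$. For the inverse, given $\mathring{\ff} : \mathring{\lc} \to \mathring{f}^{-1} \mathring{\lc}'$, apply $(j_\xx)_*$ and compose with the isomorphism $(j_\xx)_* \mathring{f}^{-1} \mathring{\lc}' = (j_\xx)_* j_\xx^{-1} f^{-1} \lc' \simeq f^{-1} \lc'$, the last step being the inverse of the unit from (a). The two assignments are mutually inverse by the triangle identities for the adjoint equivalence $(j^{-1}, j_*)$. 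The main obstacle is precisely this ``base change'' step: it is \emph{not} a consequence of cartesianness of the square of inclusions (a b-normal map may send boundary points to interior points, so the square is not cartesian), but relies essentially on the equivalences of (a); keeping the natural isomorphisms pointing in the correct direction, and verifying functoriality in $f$ and in $(\lc, \lc')$, is the main piece of bookkeeping.
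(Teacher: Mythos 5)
Your proof is correct and is precisely the ``straightforward'' argument the paper has in mind (its own proof of this lemma consists of the single word \emph{Straightforward}): the key points are that $V_\epsilon\cap S^0$ is connected and simply connected in a corner chart (giving the unit isomorphism and hence part (a)), that interior/b-normal maps send depth-zero points to depth-zero points (part (c)), and that the bijection in (d) follows from (a) together with pseudofunctoriality of inverse image along the commuting (not necessarily cartesian) square. Your added observations --- that the counit is an isomorphism simply because $j$ is an open immersion, and that the square of inclusions need not be cartesian so the extension genuinely relies on (a) --- are accurate and worth keeping.
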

\begin{proof}
Straightforward.
\end{proof}
Let $\xx$ be an orbifold with corners and $\lc$ a local system on
$\xx$. We define the complex of \emph{differential forms on $\xx$
with values in $\lc$}
\[
\Omega\left(\xx;\lc\right)=\Gamma\left(C^{\infty}\left(\bigwedge T\xx\right)\otimes_{\zz}\lc\right)
\]
as the global sections of the sheaf of sections of the vector bundle
$\bigwedge T\xx$, twisted by $\lc$. 

Suppose $\xx,\yy$ are compact orbifolds with corners, $\mathcal{K},\lc$
are local systems on $\xx$ and on $\lc$, respectively, and $f:\left(\xx,\mathcal{K}\right)\to\left(\yy,\lc\right)$
is an \emph{oriented }map, which means it is a map of local systems
$\mathcal{K}\to\lc$ lying over a smooth map of orbifolds with corners
$\xx\to\yy$. We have a pullback operation

\begin{equation}
\Omega\left(\yy;\lc\right)\xrightarrow{f^{*}}\Omega\left(\xx;\mathcal{K}\right).\label{eq:pullback}
\end{equation}
If, in addition, we assume that $f$ is horizontally submersive, then
there's a pushforward operation

\begin{equation}
\Omega\left(\xx;\kk\otimes Or\left(T\xx\right)^{\vee}\right)\xrightarrow{f_{*}}\Omega\left(\yy;\lc\otimes Or\left(T\yy\right)^{\vee}\right).\label{eq:pushforward}
\end{equation}
We now sketch how these operations are constructed. Define the complex
of compactly supported differential forms\emph{ }on $\xx$ by
\[
\Omega_{c}\left(\xx;\lc\right):=\coker\left(t_{*}-s_{*}:\Omega_{c}\left(X_{1};s^{*}\lc_{0}\right)\to\Omega_{c}\left(X_{0};\lc_{0}\right)\right),
\]
where on the right hand side, $\Omega_{c}$ denotes the usual complex
of compactly supported forms on a manifold with corners. In case
$f=\left(F_{0},F_{1}\right):\xx\to\yy$ is a smooth functor, $F_{0}^{*}$
induces a pullback map (\ref{eq:pullback}) and (if $f$ is a horizontally
submersive) $\left(F_{0}\right)_{*}$ induces a pushforward map of
compactly supported forms, 
\begin{equation}
\Omega_{c}\left(\xx;\kk\otimes Or\left(T\xx\right)^{\vee}\right)\xrightarrow{f_{*}}\Omega_{c}\left(\yy;\lc\otimes Or\left(T\yy\right)^{\vee}\right).\label{eq:pushforward compact support}
\end{equation}
In defining the operations $F_{0}^{*}$ and $\left(F_{0}\right)_{*}$
(for forms on manifolds with corners) we follow the orientation conventions
in \cite{twA8}. A \emph{partition of unity} \emph{for $\xx$} is
a smooth map\emph{ }$\rho:X_{0}\to\left[0,1\right]$ such that $\operatorname{supp}\left(s^{*}\rho\right)\cap t^{-1}\left(K\right)$
is compact for every compact subset $K\subset X_{0}$ and $t_{*}s^{*}\rho\equiv1$
(the fiber of $t$ is discrete, hence canonically oriented). Partitions
of unity always exist; since $\xx$ is assumed to be compact we can
require that $\rho$ has compact support in $X_{0}$, and use this
to construct an isomorphism
\begin{equation}
\Omega\left(\xx;\lc\right)\simeq\Omega_{c}\left(\xx;\lc\right),\label{eq:Poincare duality}
\end{equation}
see Behrend \cite{behrend}. The isomorphism (\ref{eq:Poincare duality})
allows us to define (\ref{eq:pushforward}) using (\ref{eq:pushforward compact support}).
Now if $f=\xx\xleftarrow{R}\xx'\xrightarrow{F}\yy$ is a general oriented
map, we define (\ref{eq:pullback}) by
\[
f^{*}=R_{*}F^{*},
\]
pulling back along the smooth functor $F$ and then pushing forward
along the refinement $R$ (note $R$ is horizontally submersive since
it is étale; moreover, any refinement defines an equivalence between
the categories of local systems on $\xx$ and on $\xx'$, so orientations
for $f$ are in natural bijection with orientations for $F$). If
$f$ is oriented and horizontally submersive we define the pushforward
(\ref{eq:pushforward}) by 
\[
f_{*}=F_{*}R^{*}.
\]
By construction, the operations (\ref{eq:pullback}, \ref{eq:pushforward})
extend the operations defined in \cite{twA8} for the case $\xx,\yy$
are manifolds, and they satisfy the same relations.

To make the paper more readable, outside of this appendix we will
sometimes abuse notation and refer to maps which have a specified
isomorphism as being equal. For example, if $G$ acts on $\xx$ (see
$\S$\ref{subsec:Group-actions} below) we may write 
\[
g.h.=\left(gh\right).
\]
even though in general the two sides differ by a (specified) 2-cell.
The same goes for orbifolds which are canonically equivalent (that
is, with a given equivalence, or with an equivalence which is specified
up to a unique 2-cell). For example we may write
\[
\left(\mm_{1}\times\mm_{2}\right)\times\mm_{3}=\mm_{1}\times\left(\mm_{2}\times\mm_{3}\right).
\]
When we write $p\in\xx$ we mean $p\in X_{0}$, where $\xx=X_{1}\rightrightarrows X_{0}$.

\subsection{\label{subsec:Hyperplane-Blowup}Hyperplane Blowup}

In this subsection we explain how to blow up an orbifold along a nice
codimension one locus, to obtain an orbifold with corners. This is
an important step in the construction of the moduli spaces of discs
from the moduli spaces of curves (see $\S$\ref{subsec:blowup M tilde}
and the motivating discussion in the introduction). The construction
is carried out in two steps: first, we discuss the hyperplane blowup
of manifolds, and then we extend this to orbifolds.

\subsubsection{Hyperplane blowup of manifolds}
\begin{defn}
\label{def:hyper}(a) Let $h:W\to X$ be a proper closed immersion
between manifolds without boundary. Write $h^{-1}\left(x\right)=\left\{ w_{1},...,w_{r}\right\} $
(this is finite since $h$ is proper), and let $N_{w_{i}}^{\vee}=\ker\left(T_{x}^{\vee}X\xrightarrow{dh|_{w_{i}}^{\vee}}T_{w_{i}}^{\vee}W\right)$
denote the conormal bundle to $h$. We say $h$ has \emph{transversal
self-intersection} \emph{at $x\in X$} if the induced map 
\[
\bigoplus_{i=1}^{r}N_{w_{i}}^{\vee}\to T_{x}^{\vee}X
\]
is injective. We say $h$ has transversal self-intersection\emph{
}if it has transversal self intersection at every $x\in X$. 

(b) Let $h:W\to X$ be a proper closed immersion which has transversal
self intersection. Suppose further that $h$ is \emph{codimension
one}, i.e. $\dim X-\dim W=1$. In this case we call $E=\im h$ a \emph{hyper
subset}, and call $h$ a \emph{hyper map}. Note since the conditions
on $h$ can be checked locally on the codomain $X$, being a hyper
subset is a local property. Moreover, it follows from Proposition
\ref{prop:orthant charts exist} below that the map $h$ is essentially
unique: if $W\xrightarrow{h}X,W'\xrightarrow{h'}X$ are two hyper
maps with $\im h=\im h'$ then there's a unique diffeomorphism $W\xrightarrow{\phi}W'$
such that $h=h'\circ\phi$.

(c) Let $Y\to X$ be a smooth map of manifolds without boundary, and
let $E\subset X$ be a hyper subset. We say $f$ is \emph{multi-transverse
to $E$ }if for some (hence any) hyper map $h$ such that $E=\im h$,
$f$ is transverse to $h$ and the pullback $f^{-1}W\xrightarrow{f^{-1}h}Y$
has transversal self intersection (so in fact, since $f^{-1}h$ is
necessarily a codimension one proper closed immersion, $f^{-1}E\subset Y$
is a hyper subset).
\end{defn}
The following proposition explains the usefulness of these notions
and prepares the ground for the construction of the hyperplane blow
up.
\begin{prop}
\label{prop:orthant charts exist}Let $h:W\to X$ be a hyper map between
manifolds without boundary. 

(a) For every $x\in X$ with $h^{-1}\left(x\right)=\left\{ w_{1},...,w_{r}\right\} $
there exists an open neighborhood $x\in V\subset X$ such that $h^{-1}\left(V\right)=U_{1}\coprod\cdots\coprod U_{r}$
where $w_{i}\in U_{i}\subset W$ is an open neighborhood for $1\leq i\leq r$,
together with charts $V\xrightarrow{\varphi}\rr^{n}$ and $U_{i}\simeq\rr^{n-1}$
so that $h|_{U_{i}}$ corresponds to the map $\rr^{n-1}\to\rr^{n}$
given by 
\[
\left(t_{1},...,t_{n-1}\right)\mapsto\left(t_{1},...,t_{i-1},0,t_{i},...,t_{n-1}\right).
\]
We call the coordinate chart $V\xrightarrow{\varphi}\rr^{n}$ an \emph{orthant
chart for $h$ at $x$.}

(b) Suppose $Y\xrightarrow{f}X$ is multi-transverse to $\im h$
and let $x\in X$ with $\left|h^{-1}\left(x\right)\right|=r$. Then
there exists an open neighborhood $x\in V\subset X$ together with
orthant charts $f^{-1}\left(V\right)\xrightarrow{\varphi}\rr^{m}$
and $V\xrightarrow{\psi}\rr^{n}$ for $f^{-1}h$ and $h$, respectively,
so that $\psi\circ f\circ\varphi^{-1}$ is given by 
\[
\left(t_{1},...,t_{m}\right)\to\left(t_{1},...,t_{r},\phi_{r+1}\left(t_{1},...,t_{m}\right),...,\phi_{n}\left(t_{1},...,t_{m}\right)\right)
\]
for some smooth functions $\phi_{r+1},...,\phi_{n}$.
\end{prop}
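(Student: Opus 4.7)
My plan is to prove (a) first and then reduce (b) to (a) by pulling back.

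For part (a), the first step is to use properness of $h$ to separate the preimages. Since $h^{-1}(x) = \{w_1,\dots,w_r\}$ is finite and $h$ is a proper closed immersion between Hausdorff manifolds, we can choose pairwise disjoint open neighborhoods $w_i \in U_i \subset W$ and shrink $V \ni x$ so that $h^{-1}(V) = \coprod_i U_i$ with each $h|_{U_i}$ an injective closed immersion of codimension one onto a closed submanifold $W_i \subset V$. The next step is the core local construction. Since $h|_{U_i}$ is a codimension-one closed immersion near $w_i$, there exists a smooth function $s_i$ defined near $x$ such that $W_i = \{s_i = 0\}$ locally and $ds_i|_x$ generates the conormal line $N_{w_i}^\vee$. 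The transversal self-intersection hypothesis says precisely that the $r$ conormal lines $N_{w_1}^\vee,\dots,N_{w_r}^\vee$ span an $r$-dimensional subspace of $T_x^\vee X$, equivalently that $ds_1|_x,\dots,ds_r|_x$ are linearly independent. By the rank theorem we can therefore extend $s_1,\dots,s_r$ to a full coordinate system $\varphi = (s_1,\dots,s_n)$ on a (possibly smaller) neighborhood $V$. In these coordinates $W_i = \{s_i = 0\}$, and $h|_{U_i}$ has the required form after identifying $U_i$ with $\rr^{n-1}$ via $(s_1,\dots,\hat s_i,\dots,s_n)$.

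For part (b), start by applying (a) at $x$ to obtain an orthant chart $\psi=(s_1,\dots,s_n)$ on $V \subset X$, where the sheets $W_i$ through $x$ are $\{s_i = 0\}$. Let $y \in f^{-1}(x)$; by the hypothesis that $f$ is multi-transverse to $E = \im h$, each sheet $W_i$ is transverse to $f$, and moreover the conormal lines of the sheets of $f^{-1}h$ at the preimage points of $y$ are independent. Pulled back, this says the $r$ functions $s_1\circ f,\dots,s_r\circ f$ have linearly independent differentials at $y$ and their zero loci are precisely the sheets of $f^{-1}h$ through $y$. Now apply (a) to the hyper map $f^{-1}h: f^{-1}W \to Y$ at $y$, but with a refinement: when constructing the orthant chart, rather than extending arbitrary defining functions, take $t_i := s_i \circ f$ for $1 \le i \le r$ and extend these to a coordinate chart $\varphi=(t_1,\dots,t_m)$ on $f^{-1}(V)$ near $y$. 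This is possible precisely because the $t_i$ have independent differentials at $y$. In these coordinates the first $r$ components of $\psi\circ f\circ\varphi^{-1}$ are exactly $(t_1,\dots,t_r)$, and the remaining components $\phi_j = s_j \circ f \circ\varphi^{-1}$ for $j>r$ are arbitrary smooth functions of $(t_1,\dots,t_m)$, which is the claim.

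The only nontrivial step is the passage from the abstract linear-algebraic condition on conormal bundles to the assertion that a collection of defining functions $s_1,\dots,s_r$ can be chosen with linearly independent differentials and then completed to a coordinate system. This is where the transversal self-intersection hypothesis is used in an essential way; without it one could still find local defining functions sheet by sheet but could not arrange their differentials to be jointly independent, and the orthant form would fail. Once this is in place, the rank theorem does the rest, and part (b) follows by using the chart from (a) on $X$ to extract a privileged set of first $r$ coordinates on $Y$ via pullback.
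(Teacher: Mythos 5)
Your proof is correct and follows essentially the same route as the paper: extract local defining functions from the closed-immersion property, use transversal self-intersection to make their differentials at $x$ linearly independent, complete to a coordinate chart, and invoke properness to separate the sheets. For part (b), where the paper only says the argument is similar, your choice of $t_i = s_i \circ f$ as the first $r$ coordinates on $Y$ is exactly the intended refinement.
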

\begin{proof}
We prove part (a). Since $h$ is a codimension one closed immersion
for every $1\leq i\leq r$ there exist open neighborhoods $w_{i}\in U_{i}''\subset W$
and $x\in V_{i}\subset X$ and a submersion $h_{i}:V_{i}\to\rr$ such
that the following square is cartesian
\[
\xymatrix{U_{i}''\ar[r]^{h|_{U_{i}''}}\ar[d] & V_{i}\ar[d]^{v_{i}}\\
0\ar[r] & \rr
}
\]
Since $h$ has transversal self-intersection, $dv_{1},...,dv_{r}$
are linearly independent at $x$ and therefore, in a perhaps smaller
open neighborhood $x\in V'\subset\bigcap_{i=1}^{r}V_{i}$ they extend
to a coordinate chart $\left(v_{1},...,v_{r},v_{r+1},...,v_{n}\right):V'\to\rr^{n}$.
Set $U_{i}'=h^{-1}\left(V'\right)\cap U_{i}''$. Uniqueness of the
pullback implies that for $1\leq i\leq r$, 
\[
\left(v_{1}\circ h|_{U_{i}'},...,\widehat{v_{i}\circ h|_{U_{i}'}},...,v_{n}\circ h|_{U_{i}'}\right):U_{i}'\to\rr^{n-1}
\]
is a coordinate chart for $W$ and $h|_{U_{i}'}$ obtains the desired
form in these coordinate systems. Since $h$ is proper, there's an
open neighborhood $x\in V\subset V'$ such that, setting $U_{i}=U_{i}'\cap D^{-1}\left(V\right)$
we have 
\[
h^{-1}\left(V\right)=U_{1}\coprod\cdots\coprod U_{r},
\]
completing the proof of part (a). 

The proof of part (b) is similar.
\end{proof}
\begin{defn}
(a) Let $X$ be a manifold without boundary, let $U\subset X$ be
an open subset. Consider the set of germs of connected components,\emph{
}
\[
I\left(X,U\right)=\bigcup_{x\in X}\left\{ x\right\} \times\lim_{x\in V\subset X}\pi_{0}^{x}\left(V\cap U\right)
\]
where for $V$ an open neighborhood of $x\in X$, $\pi_{0}^{x}\left(V\cap U\right)$
denotes the set of connected components $C\subset V\cap U$ with $x\in\overline{C}$
in the closure. If $V_{1}\subset V_{2}$ are two such neighborhoods,
there's an induced map $\pi_{0}^{x}\left(V_{1}\cap U\right)\to\pi_{0}^{x}\left(V_{2}\cap U\right)$,
and $\lim_{x\in V\subset X}\pi_{0}^{x}\left(V\cap U\right)$ denotes
the \emph{inverse} limit of this system of sets.

(b) If $\left(X_{1},U_{1}\right)\to\left(X_{2},U_{2}\right)$ is a
map of pairs there's an induced map $I\left(X_{1},U_{1}\right)\to I\left(X_{2},U_{2}\right)$
making $I$ a functor; there's an obvious natural transformation $I\left(X,U\right)\to X$.

(c) Let $E\subset X$ be a hyper subset. As a set, \emph{the blow
up of $X$ along $E$ }is given by\emph{ 
\[
B\left(X,E\right)=I\left(X,X\backslash E\right).
\]
}The associated natural transformation is denoted $B\left(X,E\right)\xrightarrow{\beta_{\left(X,E\right)}}X$,
and if $Y\xrightarrow{f}X$ is multi-transverse to $E$ write 
\[
B\left(Y,f^{-1}E\right)\xrightarrow{B\left(f\right)}B\left(X,E\right)
\]
for the induced map.
\end{defn}

\begin{prop}
\label{prop:hyper bu functor ManC}Let $\text{\textbf{Man}}^{+}$
denote the category of \emph{marked manifolds,} whose objects are
pairs $\left(X,E\right)$ where $X$ is a manifold without boundary
and $E$ is a hyper subset of $X$, and where an arrow $\left(X_{1},E_{1}\right)\to\left(X_{2},E_{2}\right)$
is given by a map $X_{1}\xrightarrow{f}X_{2}$ which is multi-transverse
to $E_{2}$ and such that $f^{-1}E_{2}=E_{1}$. Let $\manc_{ps}$denote
the category of manifolds with corners with perfectly simple maps.
Then blowing up gives a faithful functor
\[
B:\text{\textbf{Man}}^{+}\to\manc_{ps}
\]
together with a natural transformation $B\left(X,E\right)\xrightarrow{\beta_{\left(X,E\right)}}X$.

Moreover, if $\left(X_{1},E_{1}\right),\left(X_{2},E_{2}\right)$
are any two objects of $\text{\textbf{Man}}^{+}$, any étale map $f:X_{1}\to X_{2}$
is a morphism of $\text{\textbf{Man}}^{+}$ and $B\left(f\right)$
is also étale in this case.
\end{prop}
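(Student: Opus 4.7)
The plan is to use the orthant charts of Proposition \ref{prop:orthant charts exist} to equip $B(X, E)$ with a manifold-with-corners structure and then upgrade the set-theoretic construction to a functor. Given $x \in X$ with $h^{-1}(x) = \{w_1, \ldots, w_r\}$ and an orthant chart $\varphi : V \to \rr^n$ for $h$ at $x$, the complement $V \cap (X \setminus E)$ corresponds under $\varphi$ to $\rr^n \setminus \bigcup_{i=1}^{r}\{t_i = 0\}$, whose connected components are canonically indexed by sign patterns $\epsilon = (\epsilon_1, \ldots, \epsilon_r) \in \{\pm\}^r$. Each such $\epsilon$ defines a clopen piece $U_{V,\epsilon} \subset \beta_{(X,E)}^{-1}(V)$ together with a chart
\[
\tilde{\varphi}_\epsilon : U_{V, \epsilon} \xrightarrow{\cong} [0,\infty)^r \times \rr^{n-r}
\]
characterized by $\varphi \circ \beta_{(X,E)} \circ \tilde{\varphi}_\epsilon^{-1} : (u_1, \ldots, u_n) \mapsto (\epsilon_1 u_1, \ldots, \epsilon_r u_r, u_{r+1}, \ldots, u_n)$. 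Chart compatibility is verified by analyzing transitions between two such charts $\varphi_i : V_i \to \rr^n$: the diffeomorphism $\varphi_2 \circ \varphi_1^{-1}$ of $V_1 \cap V_2$ permutes the local branches of $E$, inducing a pairing of orthants above each $y \in V_1 \cap V_2$ via the unique component of $V_1 \cap V_2 \cap (X \setminus E)$ containing given germs from both sides. Hausdorffness and second-countability descend from $X$ (using that $\beta_{(X,E)}$ is finite-to-one), and $\beta_{(X,E)}$ is smooth by construction.

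For a morphism $f : (X_1, E_1) \to (X_2, E_2)$, Proposition \ref{prop:orthant charts exist}(b) provides orthant charts in which $f$ takes the form $(t_1, \ldots, t_m) \mapsto (t_1, \ldots, t_r, \phi_{r+1}, \ldots, \phi_n)$. The hypothesis $f^{-1}E_2 = E_1$ together with multi-transversality implies that at any preimage $x_1 \in f^{-1}(x_2)$ the branches of $E_1$ through $x_1$ biject with the branches of $E_2$ through $x_2$, so the source is also an $r$-orthant chart, with the $i$-th branch of $E_1$ pulled back from the $i$-th branch of $E_2$. Consequently the orthant labeled $\epsilon \in \{\pm\}^r$ in the source maps into the orthant with the same label in the target, and in these charts $B(f)$ reads
\[
(u_1, \ldots, u_m) \mapsto \bigl(u_1, \ldots, u_r, \phi_{r+1}(\epsilon_1 u_1, \ldots, \epsilon_r u_r, u_{r+1}, \ldots, u_m), \ldots\bigr),
\]
which is smooth and depth-preserving, hence perfectly simple. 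Functoriality $B(g \circ f) = B(g) \circ B(f)$ and the naturality identity $f \circ \beta_1 = \beta_2 \circ B(f)$ are inherited from the functoriality of $I$, while faithfulness follows because $\beta_{(X,E)}$ is surjective: $B(f_1) = B(f_2)$ implies $f_1 \circ \beta_1 = f_2 \circ \beta_1$, forcing $f_1 = f_2$.

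Finally, for an étale $f : X_1 \to X_2$, transversality to $h$ and transversal self-intersection of $f^{-1}h$ are both automatic, so assuming $f^{-1}E_2 = E_1$ the map $f$ is a morphism in $\text{\textbf{Man}}^+$. In the orthant form for $f$ the map $(\phi_{r+1}, \ldots, \phi_n)$ must additionally have full rank in the coordinates $(u_{r+1}, \ldots, u_m)$, making the chart formula for $B(f)$ a local diffeomorphism, so $B(f)$ is étale. The main technical obstacle is the chart-compatibility step in the first paragraph: different orthant charts may have different numbers of branches passing through them, and correctly correlating orthants across charts relies essentially on the intrinsic definition of $B(X,E)$ as germs of connected components of $X \setminus E$, which supplies the bookkeeping needed to align the sign patterns. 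Once this is in place, the remaining steps reduce to routine manipulation of the explicit chart formulas.
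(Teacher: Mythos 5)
Your proposal is correct and follows essentially the same route as the paper: reduce to the standard local model $B(\rr^n,E_0)\simeq 2^r\times\rr_r^n$ over an orthant chart from Proposition \ref{prop:orthant charts exist}, and lift morphisms using the normal form of part (b) of that proposition. The chart-compatibility and lift-uniqueness issues you correctly flag as the crux are exactly what the paper packages into Lemma \ref{lem:uniqueness for manc blowup} on rectilinear maps (uniqueness and gluability of the manifold-with-corners structure making $\beta_{(X,E)}$ rectilinear), so your inline sketch of the transition-map argument is just that lemma unrolled.
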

The proof of this proposition appears below. The following definition
and lemma characterize the manifold with corners structure on the
blow up. More precisely, $B\left(X,E\right)$ will be equipped with
the unique manifold with corners structure on the set $B\left(X,E\right)$
making the map $\beta_{\left(X,E\right)}$ \emph{rectilinear}:
\begin{defn}
\label{def:rectilinear map}Let $C$ be a manifold with corners, $M$
a manifold without boundary. A map $f:C\to M$ will be called \emph{rectilinear
}if the restriction of $f$ to interior points is an injective map
$\mathring{C}\to M$, and for every $c\in C$ there exist a non-negative
integer $k$ and coordinate charts $U\xrightarrow{\varphi}\rr_{k}^{n},c\in U,\varphi\left(c\right)=0$
and $V\xrightarrow{\psi}\rr^{n},f\left(c\right)\in V,\psi\left(f\left(c\right)\right)=0$
such that $f\left(U\right)\subset V$ and $\psi\circ f\circ\varphi^{-1}$
is the standard embedding of $\rr_{k}^{n}$ to $\rr^{n}$, restricted
to $\varphi\left(U\right)$.
\end{defn}
\begin{lem}
\label{lem:uniqueness for manc blowup}(a) Let $C$ be a set, $M$
a manifold without boundary, and $f:C\to M$ a map of sets. A structure
of a manifold with corners\footnote{That is, a Hausdorff second countable topology on $C$ together with
a suitable maximal atlas.} on $C$ making $f$ a rectilinear map is unique if it exists. 

(b) If there's an open cover $M=\bigcup V_{i}$ such that $f^{-1}\left(V_{i}\right)$
admits a manifold with corners structure making $f|_{f^{-1}\left(V_{i}\right)}$
rectilinear, then $C$ admits a structure of a manifold with corners
making $f$ rectilinear.

(c) For $i=1,2$, let $C_{i}$ be a manifold with corners, $M_{i}$
a manifold without boundary, and $C_{i}\xrightarrow{f_{i}}M_{i}$
a rectilinear map. Let $M_{1}\xrightarrow{g}M_{2}$ be a smooth map.
An interior map $C_{1}\xrightarrow{\tilde{g}}C_{2}$ making the square
\[
\xymatrix{C_{1}\ar[r]\ar[d] & C_{2}\ar[d]\\
M_{1}\ar[r] & M_{2}
}
\]
commute is unique if it exists.
\end{lem}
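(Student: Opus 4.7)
The plan is to prove the three parts in order, reducing (b) to (a) by a gluing argument and deducing (c) from density of the interior together with Hausdorffness of the codomain.

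For part (a), the key point is that rectilinearity around any $c \in C$ forces the local topology and smooth atlas to be determined by $f \colon C \to M$ alone. Given a rectilinear chart $\varphi \colon U \to \rr_{k}^{n}$ with ambient $\psi \colon V \to \rr^{n}$ satisfying $\psi \circ f|_{U} = \iota_{k} \circ \varphi$ (where $\iota_{k} \colon \rr_{k}^{n} \hookrightarrow \rr^{n}$ is the standard inclusion), the injectivity of $\iota_{k}$ implies that $f|_{U}$ is a homeomorphism of $U$ onto an orthant-piece of $V$, and the smooth structure on $U$ is the pullback. My task is then to show that any two rectilinear structures at $c$ yield the same local image $f(U) \subset V$ and the same codimension $k$. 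I would deduce this from the global injectivity clause in the definition: in either structure, $\mathring{C}$ is dense in a neighborhood of $c$ and maps injectively, so $f(\mathring{C} \cap U)$ is a canonical open subset of $M$ whose closure inside $V$ pins down both $k$ and $f(U)$. The two charts must then differ by a diffeomorphism of $\rr_{k}^{n}$, so they produce the same maximal atlas.

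For part (b), I would glue the given local structures using (a): on the overlap $f^{-1}(V_{i} \cap V_{j}) = f^{-1}(V_{i}) \cap f^{-1}(V_{j})$, both inherited structures make the restricted $f$ rectilinear and hence coincide by part (a). Declaring $W \subset C$ open iff $W \cap f^{-1}(V_{i})$ is open for every $i$ gives a topology, and the local atlases assemble into a maximal atlas. Hausdorffness reduces to the case of two distinct points with a common image, which then lie in some common $f^{-1}(V_{i})$; second countability follows by extracting a countable subcover of $\{V_{i}\}$ using second countability of $M$.

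For part (c), the interior hypothesis forces $\tilde{g}(\mathring{C}_{1}) \subset \mathring{C}_{2}$. For $c \in \mathring{C}_{1}$, the identity $f_{2}(\tilde{g}(c)) = g(f_{1}(c))$ combined with injectivity of $f_{2}|_{\mathring{C}_{2}}$ determines $\tilde{g}(c)$ uniquely. Since $\mathring{C}_{1}$ is dense in $C_{1}$ (by the orthant-local model) and $C_{2}$ is Hausdorff, any continuous extension of $\tilde{g}|_{\mathring{C}_{1}}$ to all of $C_{1}$ is unique. The main obstacle I anticipate is the argument in (a) that the corner codimension $k$ is intrinsic, where the global injectivity clause in the definition of rectilinearity---rather than merely the existence of orthant charts---is essential to produce the canonical open subset $f(\mathring{C} \cap U)$ of $M$ whose closure inside $V$ exposes the local orthant structure.
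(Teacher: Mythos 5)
Parts (b) and (c) of your proposal follow the paper's proof essentially step for step: glue the local structures along overlaps using part (a), take the colimit topology, and check Hausdorffness by splitting into the cases $f(x)\neq f(y)$ and $f(x)=f(y)\in V_{i}$; for (c), use that an interior map sends $\mathring{C}_{1}$ into $\mathring{C}_{2}$, that $f_{2}|_{\mathring{C}_{2}}$ is injective so $\tilde{g}|_{\mathring{C}_{1}}$ is forced by $g$, and that $\mathring{C}_{1}$ is dense. No issues there.

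Part (a) is where you diverge from the paper, and where there is a genuine gap. The paper never tries to show that $k$ or $f(U)$ is intrinsic: writing $\iota_{k}$ for the standard embedding $\rr_{k}^{n}\hookrightarrow\rr^{n}$, it observes that for $x$ in the overlap of the two chart domains one has $\iota_{k_{2}}(\varphi_{2}(x))=\psi_{2}\psi_{1}^{-1}(\iota_{k_{1}}(\varphi_{1}(x)))$, so the identity map $C_{1}\to C_{2}$, read in the two charts, is a restriction of the smooth ambient transition $\psi_{2}\psi_{1}^{-1}$ and hence weakly smooth; by symmetry the two maximal atlases coincide. Your argument instead hinges on the assertion that $f(\mathring{C}\cap U)$ is a \emph{canonical} open subset of $M$. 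But $\mathring{C}$ is determined by the manifold-with-corners structure, which is exactly what is in question: a priori the two structures may disagree about which points of $C$ are interior, and then $f(\mathring{C}_{1}\cap U_{1})$ and $f(\mathring{C}_{2}\cap U_{2})$ are different open sets; injectivity of $f$ on each interior separately does nothing to prevent this. Indeed, with the literal Definition \ref{def:rectilinear map} the claim fails: for $C=M=\rr$ and $f=\id$, both the standard structure on $\rr$ and the structure $(-\infty,0]\sqcup(0,\infty)$ make $f$ rectilinear (at $0$ take $\varphi(x)=-x$ on $(-\delta,0]$ and $\psi(y)=-y$ on $(-\delta,\delta)$), yet they have different interiors, different values of $k$ at $0$, and different topologies. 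What both your argument and the paper's tacitly require (the paper, at the step where it writes $U_{1}\cap U_{2}=f^{-1}(V_{1}\cap V_{2})$) is that the rectilinear charts be saturated, $U=f^{-1}(V)$; this does hold for the blowup maps to which the lemma is applied. Granting saturation, your plan closes up, since $f(\mathring{C}_{i}\cap U)$ is then the interior in $M$ of the fixed set $f(f^{-1}(V))$ and so is independent of $i$ --- but that is the step that has to be argued, not asserted.
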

\begin{proof}
We prove (a). Suppose $C_{1},C_{2}$ are two manifolds with corners
with the same underlying set $C$, making $C\xrightarrow{f}M$ rectilinear.
It suffices to show that the identity map $C_{1}\to C_{2}$ is weakly
smooth. Consider any $c\in C$. For $i=1,2$ let $\left(c\in U_{i}\xrightarrow{\varphi_{i}}\rr_{k_{i}}^{n},f\left(c\right)\in V_{i}\xrightarrow{\psi_{i}}\rr^{n}\right)$
be a pair of coordinate charts satisfying the conditions of Definition
\ref{def:rectilinear map} with respect to the manifolds with corners
structure $C_{i}$ on $C$. $U_{1}\cap U_{2}=f^{-1}\left(V_{1}\cap V_{2}\right)$
must be open in both topologies on $C$. $\psi_{1}\left(V_{1}\cap V_{2}\right)\xrightarrow{\psi_{2}\psi_{1}^{-1}}\psi_{2}\left(V_{1}\cap V_{2}\right)$
is a smooth map between open neighborhoods of $0\in\rr^{n}$, so its
restriction $\varphi_{1}\left(U_{1}\cap U_{2}\right)\to\varphi_{2}\left(U_{1}\cap U_{2}\right)$
is weakly smooth map.

We prove (b). For each pair of indices $f^{-1}\left(V_{i}\cap V_{j}\right)$
is an open subset of both $f^{-1}\left(V_{i}\right)$ and of $f^{-1}\left(V_{j}\right)$
(since $V_{i}\cap V_{j}$ is an open subset of $V_{i}$ and of $V_{j}$)
and inherits a manifold with corners structure from both. By part
(a), these structure must be equal. In particular the maps $f^{-1}\left(V_{i}\cap V_{j}\right)\to f^{-1}\left(V_{i}\right)$
are continuous (in fact, open topological embeddings) and we can equip
$C$ with the colimit topology, which is clearly second countable.
It remains only to check that this topology is Hausdorff: consider
two distinct points $x,y\in C$. If $f\left(x\right)\neq f\left(y\right)$
there are open neighborhoods $U_{x},U_{y}\subset M$ of $x$ and $y$
respectively with $U_{x}\cap U_{y}=\emptyset$, and we may assume
without loss of generality that $U_{x},U_{y}$ are contained in some
$V_{i},V_{j}$ so that their inverse images are open. If $f\left(x\right)=f\left(y\right)\in V_{i}$
then we can use the assumption that $f^{-1}\left(V_{i}\right)$ is
Hausdorff to separate $x$ from $y$.

We prove (c). We can consider the interior points $\mathring{C}_{i}$
as subsets of $M_{i}$. If $\tilde{g}$ is an interior map then $\tilde{g}\left(\mathring{C}_{1}\right)\subset\mathring{C}_{2}$
so the restriction of $\tilde{g}$ to interior points is determined
by $g$, $\tilde{g}|_{\mathring{C}_{1}}=g|_{\mathring{C}_{1}}$. Since
the interior points are dense in $C_{1}$, a continuous extension
of $g|_{\mathring{C}_{1}}$ to $C_{1}$ is unique if it exists.
\end{proof}
\begin{proof}
[Proof (of Proposition \ref{prop:hyper bu functor ManC})]Let $h_{0}:[k]\times\rr^{n-1}\to\rr^{n}$
denote the standard immersion of the first $k$ coordinate hyperplanes,
and write $E_{0}=\im h_{0}$. It is a hyper map and there's an obvious
identification $B\left(\rr^{n},E_{0}\right)=2^{k}\times\rr_{k}^{n}$
(a choice of germ of connected component of $\rr^{n}\backslash E_{0}$
amounts to choosing a point of $\rr^{n}$ together with an incident
orthant). Clearly, the manifold with corners structure on $2^{k}\times\rr_{k}^{n}$
makes the map $B\left(\rr^{n},E_{0}\right)\to\rr^{n}$ rectilinear.
Now suppose $\left(X,E\right)$ is an object of $\text{\textbf{Man}}^{+}$,
and let $W\xrightarrow{h}X$ be a hyper map with $E=\im h$. We can
cover $X$ by orthant charts, $X=\bigcup V_{i}$ with $V_{i}\xrightarrow{\psi_{i}}\rr^{n}$
as in Proposition \ref{prop:orthant charts exist} so $B\left(V_{i},E\cap V_{i}\right)\subset B\left(X,E\right)$
inherits the structure of a manifold with corners from $B\left(V_{i},E\right)\subset B\left(\rr^{n},E_{0}\right)=2^{k}\times\rr_{k}^{n}$
making the map $B\left(V_{i},E\right)\to V_{i}$ rectilinear. It follows
that $B\left(X,E\right)$ admits a unique manifold with corners structure
making the map to $X$ rectilinear, and this defines the action of
the functor $B$ on objects. Its action on multi-transverse arrows
is defined similarly, using part (b) of Proposition \ref{prop:orthant charts exist}
and the uniqueness of lifts, part (c) of Lemma \ref{lem:uniqueness for manc blowup}.
This uniqueness also implies that the functor is faithful.

Clearly, a local diffeomorphism is multi-transverse to any $\left(X,E\right)$
and its lift admits local inverses, proving the last statement.
\end{proof}

\subsubsection{\label{subsec:orbifold hyper bu}Hyperplane blowup of orbifolds}

We consider the bicategory $\text{\textbf{PEG}}_{\partial=\emptyset}^{+}$
of \emph{marked proper étale groupoids} (without boundary). The objects
of $\text{\textbf{PEG}}_{\partial=\emptyset}^{+}$ are pairs $\left(X_{1}\rightrightarrows X_{0},E\right)$
where $X_{1}\rightrightarrows X_{0}$ is a proper étale groupoid without
boundary, and $E\subset X_{0}$ is a hyper subset \emph{which is a
union of orbits}, $s^{-1}E=t^{-1}E$.

If $\left(X_{\bullet}^{\left(1\right)},E^{\left(1\right)}\right),\left(X_{\bullet}^{\left(2\right)},E^{\left(2\right)}\right)$
are two objects, a 1-cell of $\text{\textbf{PEG}}_{\partial=\emptyset}^{+}$
consists of a smooth functor 
\[
X_{\bullet}^{\left(1\right)}\xrightarrow{F=\left(F_{0},F_{1}\right)}X_{\bullet}^{\left(2\right)}
\]
such that $F_{0}$ is multi-transverse to $E^{\left(2\right)}$ and
$E^{\left(1\right)}=F_{0}^{-1}E^{\left(2\right)}$. Note that every
étale map, and in particular every refinement, satisfies this condition.
The 2-cells in $\text{\textbf{PEG}}_{\partial=\emptyset}^{+}$ are
all the 2-cells of $\text{\textbf{PEG}}$ spanned by the 1-cells specified
above. 

For emphasis, in this subsection we denote the bicategory of proper
étale groupoids and orbifolds in the category $\manc$ by $\text{\textbf{PEG}}^{c}$
and $\text{\textbf{Orb}}^{c}$, respectively. We denote by $\text{\textbf{PEG}}_{ps}^{c},\text{\textbf{Orb}}_{ps}^{c}$
the subcategories whose maps are perfectly simple maps. 

If $X_{1}\rightrightarrows X_{0}$ is a groupoid, we write 
\[
X_{2}=X_{1}\fibp{t}{s}X_{1}
\]
for the manifold with corners parameterizing composable arrows
\[
x_{1}\xrightarrow{a}x_{2}\xrightarrow{b}x_{3}
\]
and, for $i=1,2,3$, we denote by $p_{i}:X_{2}\to X_{0}$ the map
sending a composable arrow as above to $x_{i}$.
\begin{thm}
The functor $B$ extends to a strict 2-functor
\[
B:\text{\textbf{PEG}}_{\partial=\emptyset}^{+}\to\text{\textbf{PEG}}_{ps}^{c}
\]
which takes
\[
\left(\xx=X_{2}\xrightarrow{m}\overset{\overset{i}{\curvearrowleft}}{X_{1}}\begin{array}{c}
\overset{s,t}{\rightrightarrows}\\
\underset{e}{\leftarrow}
\end{array}X_{0},E\right)
\]
to 
\[
B\left(\xx\right)=B\left(X_{2},p_{1}^{-1}E\right)\overset{B\left(m\right)}{\to}B\overset{\overset{B\left(i\right)}{\curvearrowleft}}{\left(X_{1},s^{-1}E\right)}\begin{array}{c}
\overset{B\left(s\right),B\left(t\right)}{\rightrightarrows}\\
\underset{B\left(e\right)}{\leftarrow}
\end{array}B\left(X_{0},E\right)
\]
together with the obvious strict natural transformation $B\left(\xx\right)\xrightarrow{\beta_{\left(\xx,E\right)}}\xx$.

This functor takes refinements to refinements, and thus there's an
induced functor between the 2-localization of these categories 
\[
B:\text{\textbf{Orb}}_{\partial=\emptyset}^{+}\to\text{\textbf{Orb}}_{ps}^{c}.
\]
\end{thm}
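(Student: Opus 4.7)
The plan is to build $B$ on $\text{\textbf{PEG}}_{\partial=\emptyset}^{+}$ by applying the manifold-level blow-up functor of Proposition \ref{prop:hyper bu functor ManC} to each piece of groupoid data, and then descend to the 2-localization via its universal property. The first task is to check that, for a marked PEG $(\xx,E)$ with $\xx=X_{1}\overset{s,t}{\rightrightarrows}X_{0}$, every structure map is a morphism in $\text{\textbf{Man}}^{+}$ once $E$ is pulled back along the source maps. Since $\xx$ is étale, each of $s,t,e,i,m$ is étale, so multi-transversality holds automatically (last sentence of Proposition \ref{prop:hyper bu functor ManC}), and the preimage identity $F_{0}^{-1}E^{(2)}=E^{(1)}$ reduces to short groupoid calculations: $s\circ i=t$ gives $i^{-1}(s^{-1}E)=t^{-1}E=s^{-1}E$, $s\circ e=\id$ gives $e^{-1}(s^{-1}E)=E$, and $s\circ m=p_{1}$ gives $m^{-1}(s^{-1}E)=p_{1}^{-1}E$.

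Applying $B$ then produces étale smooth maps between $B(X_{0},E)$, $B(X_{1},s^{-1}E)$, and $B(X_{2},p_{1}^{-1}E)$ (again by Proposition \ref{prop:hyper bu functor ManC}), and all groupoid axioms are equations in $\text{\textbf{Man}}^{+}$ which $B$ preserves on the nose because it is a strict functor. The one nontrivial point to verify is the identification $B(X_{2},p_{1}^{-1}E)\cong B(X_{1},s^{-1}E)\fibp{B(t)}{B(s)}B(X_{1},s^{-1}E)$, without which $B(\xx)$ does not acquire a groupoid composition. I expect this to be the main obstacle. It is a local question: working in orthant charts supplied by Proposition \ref{prop:orthant charts exist}, both sides become disjoint unions of standard orthants indexed by the same germ-of-component data. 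Equivalently, by the rectilinear uniqueness of Lemma \ref{lem:uniqueness for manc blowup}(a), both candidate manifold-with-corners structures on the underlying set make the natural map to $X_{2}$ rectilinear, so they coincide.

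Properness of $B(s)\times B(t)$ follows by factoring through $X_{1}\xrightarrow{(s,t)}X_{0}\times X_{0}$: each blow-down $\beta_{(X_\bullet,\cdot)}$ is proper since its fibers are finite of size at most $2^{r}$, so the composite $B(X_{1})\to X_{1}\to X_{0}\times X_{0}$ is proper and hence the lift $B(X_{1})\to B(X_{0})\times B(X_{0})$ is proper as well. The action of $B$ on $1$-cells and $2$-cells of $\text{\textbf{PEG}}_{\partial=\emptyset}^{+}$ is handled identically: for a $2$-cell $\alpha\colon X_{0}^{(1)}\to X_{1}^{(2)}$, the identity $s\circ\alpha=F_{0}$ forces $\alpha^{-1}(s^{-1}E^{(2)})=E^{(1)}$ and upgrades multi-transversality of $F_{0}$ to multi-transversality of $\alpha$ via the étale source map on $X_{1}^{(2)}$. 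Strictness of the resulting $2$-functor is inherited directly from strictness of $B\colon\text{\textbf{Man}}^{+}\to\manc_{ps}$.

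For the induced functor on $2$-localizations I appeal to the universal property of Pronk's construction: it suffices to check that refinements in $\text{\textbf{PEG}}_{\partial=\emptyset}^{+}$ go to refinements in $\text{\textbf{PEG}}_{ps}^{c}$. Since $B$ sends étale maps to étale maps, only the equivalence-of-categories content requires checking, which breaks into preservation of essential surjectivity (immediate from surjectivity of $\beta$) and preservation of the fully-faithful pullback square $X_{1}\cong(X_{0}\times X_{0})\times_{Y_{0}\times Y_{0}}Y_{1}$. The latter reduces to the same local orthant computation as in the composition-map case; alternatively, because $s\times t$ is étale, the fibered product exists in $\manc$ by case (ii) of Lemma \ref{prop:fibered product in manc}, and $B$ respects such fibered products by the rectilinearity argument.
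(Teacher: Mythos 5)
Your proposal is correct and follows essentially the same route as the paper: apply the manifold-level blow-up functor of Proposition \ref{prop:hyper bu functor ManC} to each piece of groupoid data (the paper declares this verification straightforward, while you usefully spell out the preimage identities, the identification of $B(X_{2},p_{1}^{-1}E)$ with the fiber product via rectilinear uniqueness, and the preservation of refinements), and establish properness of $B(s)\times B(t)$ by the identical factoring argument through $\beta_{1}$ and $s\times t$. The only caveat is that properness of the blow-down $\beta$ needs closedness in addition to finiteness of fibers, but this is locally evident from the model $2^{k}\times\rr_{k}^{n}\to\rr^{n}$ and the paper's own justification is no more detailed.
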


\begin{proof}
The verification that $B\left(\xx\right)$ is a groupoid with étale
structure maps is straightforward. We write $B_{0}=B\left(X_{0},E\right)$,
$B_{1}=B\left(X_{1},s^{-1}E\right)$, $B_{2}=B\left(X_{2},p_{1}^{-1}E\right)$
and, for $i=0,1,2$, $B_{i}\xrightarrow{\beta_{i}}X_{i}$ for the
associated map. We need to check that $B\left(s\right)\times B\left(t\right):B_{1}\to B_{0}\times B_{0}$
is proper. Note $\beta_{1}$ is proper, since it is closed (as any
continuous map from a locally compact space to a Hausdorff space is
closed) and the fiber over a point is compact (in fact, finite). It
follows that $\left(s\times t\right)\circ\beta_{1}$ is proper and
so, if $K\subset B_{0}\times B_{0}$ is any compact subset, 
\[
\left(B\left(s\right)\times B\left(t\right)\right)^{-1}\left(K\right)\subset\left(\left(s\times t\right)\circ\beta_{1}\right)^{-1}\left(\left(\beta_{0}\times\beta_{0}\right)\left(K\right)\right)
\]
is the inclusion of a closed subset into a compact subset; therefore
$\left(B\left(s\right)\times B\left(t\right)\right)^{-1}\left(K\right)$
must be compact.

The other statements are also straightforward.
\end{proof}

\subsubsection{A hyper map between orbifolds}

There's a natural way to construct objects and arrows in $\text{\textbf{Orb}}_{\partial=\emptyset}^{+}$.
Let $h:\mathcal{W}\to\xx$ be a map of orbifolds without boundary,
given by a pair of smooth functors
\begin{equation}
\wc\xleftarrow{S}\left(\tilde{\wc}=\tilde{W}_{1}\rightrightarrows\tilde{W}_{0}\right)\xrightarrow{H=\left(H_{1},H_{0}\right)}\left(\xx=X_{1}\rightrightarrows X_{0}\right)\label{eq:h as a fraction}
\end{equation}
with $S$ a refinement. 

Let $W_{0}'=X_{1}\fibp{t}{H_{0}}\tilde{W}_{0}$ . Since $t$ is étale
this fiber product exists. We let $H_{0}':W_{0}'\to X_{0}$ denote
the composition $X_{1}\fibp{t}{H_{0}}\tilde{W}_{0}\to X_{1}\xrightarrow{s}X_{0}$.
We call the image of $H_{0}'$ the\emph{ essential image} of $h$,
and denote it $\im h$. Fix some point $x\in X_{0}$. The \emph{essential
fiber }of $h$ over $x$ is a topological groupoid, with object space
$\left(H_{0}'\right)^{-1}\left(x\right)$ and with arrows between
$\left(x\xrightarrow{\alpha}H_{0}\left(w\right),w\right)$ and $\left(x\xrightarrow{\alpha'}H_{0}\left(w'\right),w'\right)$
consisting of the arrows in $\tilde{W}_{1}$ between $w$ and $w'$
(this is a special case of the weak fiber product, see Lemma \ref{lem:fibp in orb}).
If $R$ is a refinement, the essential fiber of $h$ over $x$ and
of $R\circ h$ over $R\left(x\right)$ are equivalent. The essential
image and, up to equivalence the essential fiber, depend only on
the homotopy class of $H$ (in particular, they do not depend on $S$).
\begin{defn}
\label{def:orbimaps properies 2}We say that $h$ is \emph{hyper}
if the following five conditions are met (cf. Definition \ref{def:hyper})
\begin{itemize}
\item $h$ is \emph{faithful}, which means $H_{0}$ is faithful. This implies
the essential fiber over every point is equivalent to a set (with
a topology).
\item $h$ is a \emph{closed immersion}, which means $H_{0}$ is a closed
immersion. This implies the orbit space of each essential fiber has
the discrete topology.
\item $h$ is \emph{proper}, which means the essential fibers have compact
orbit spaces. Given our previous assumptions this means the\emph{
}essential fiber is equivalent to a \emph{finite} set\emph{,} and
and we fix representatives $\left\{ q_{i}=x\xrightarrow{\alpha_{1}}H_{0}\left(w_{1}\right)\right\} _{i=1}^{r}$.
\item We require that $h$ has \emph{transversal self-intersection},\emph{
}that is, we require the map
\[
\bigoplus_{i=1}^{r}N_{w_{i}}^{\vee}\to T_{x}^{\vee}X_{0}
\]
be injective, where $N_{w_{i}}^{\vee}=\ker\left(T_{w_{i}}^{\vee}W_{0}\to T_{x}^{\vee}X_{0}\right)$;
this is independent of the choice of representatives.
\item $h$ has \emph{codimension one}, meaning $\dim\xx-\dim\wc=1$.
\end{itemize}
If $\yy$ is another orbifold without boundary, we say a map $\yy\xrightarrow{f}\xx$
is \emph{multi-transverse} to $h$ if $h$ is (b-)transverse to $f$
and the 2-pullback $f^{-1}h$ has transversal self-intersection (it
is automatically a proper, faithful closed immersion). 
\end{defn}
\begin{lem}
(a) Let $\wc\xrightarrow{h}\xx$ be a hyper map. Then $\left(\xx,\im h\right)$
is an object of $\text{\textbf{Orb}}_{\partial=\emptyset}^{+}$.

(b) If $f$ is multi-transverse to $h$, $\left(\yy,\im f^{-1}h\right)\xrightarrow{f}\left(\xx,\im h\right)$
is an arrow in $\text{\textbf{Orb}}_{\partial=\emptyset}^{+}$.
\end{lem}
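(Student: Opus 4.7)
The plan is to realize the essential image of $h$ as the image of an explicit hyper map of manifolds (in the sense of Definition \ref{def:hyper}), verify that this image is saturated under the groupoid action, and then check that the hypotheses on $f$ translate directly into the manifold-level multi-transversality required for membership in $\text{\textbf{Orb}}_{\partial=\emptyset}^{+}$.

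For part (a), I would write $h = H \circ S^{-1}$ as in (\ref{eq:h as a fraction}) and form the manifold $W_0' = X_1 \fibp{t}{H_0} \tilde{W}_0$ with the map $H_0' : W_0' \to X_0$, $(\alpha,w) \mapsto s(\alpha)$. Since $t$ is \'etale, Lemma \ref{prop:fibered product in manc}(ii) makes the projection $W_0' \to \tilde{W}_0$ \'etale, so $\dim W_0' = \dim\wc = \dim\xx - 1$, giving codimension one. Locally $H_0'$ factors as an \'etale map followed by the closed immersion $H_0$, hence is itself a closed immersion. The essential fiber of $h$ over $x$ is naturally in bijection with $(H_0')^{-1}(x)$, which is finite by the assumptions that $h$ is faithful, proper, and a closed immersion; as a closed map with finite fibers, $H_0'$ is therefore proper. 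The transversal-self-intersection clause of Definition \ref{def:hyper} at $x$ is exactly the injectivity condition in the fourth bullet of Definition \ref{def:orbimaps properies 2}, once one uses $ds_{\alpha_i} : T_{\alpha_i} X_1 \xrightarrow{\sim} T_x X_0$ to identify conormals. This identifies $\im h$ as a hyper subset of $X_0$. Finally, if $\alpha : x \to H_0(w)$ is a representative of the essential fiber over $x$ and $\beta : x \to x'$ is any arrow in $X_1$, then $\alpha \circ \beta^{-1}$ shows $x' \in \im h$, so $s^{-1}(\im h) = t^{-1}(\im h)$.

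For part (b), I would represent $f$ by a smooth functor $F_\bullet : \tilde Y_\bullet \to X_\bullet$ after passing to a refinement, and observe that the 2-pullback $\zc = \yy \times_\xx \wc$ carries a hyper map $f^{-1} h$ whose essential image consists of those $y \in \tilde Y_0$ such that $F_0(y)$ is connected by some arrow in $X_1$ to a point of $\im H_0$ -- i.e.\ precisely $F_0^{-1}(\im h)$. Applying (a) to $f^{-1} h$ gives $\im f^{-1} h = F_0^{-1}(\im h)$ as a hyper subset of $\tilde Y_0$. It remains to verify that $F_0$ is multi-transverse to $\im h$ in the manifold sense of Definition \ref{def:hyper}(c): the $b$-transversality of $f$ with $h$ yields transversality of $F_0$ with the hyper map $H_0'$ (using Remark \ref{rem:easy b-transversality} since $\partial\xx = \emptyset$), and the clause in Definition \ref{def:orbimaps properies 2} demanding that $f^{-1} h$ itself have transversal self-intersection translates, via the argument of (a), into transversal self-intersection of the pullback $F_0^{-1} H_0'$. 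This verifies the conditions for a 1-cell in $\text{\textbf{PEG}}_{\partial=\emptyset}^{+}$, and hence in $\text{\textbf{Orb}}_{\partial=\emptyset}^{+}$ after 2-localization.

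The main obstacle will be the bookkeeping that identifies the orbifold-level essential fiber together with its conormal data with the manifold-level fiber $(H_0')^{-1}(x)$ and its conormals, via the \'etale source/target maps of $X_\bullet$. Once this correspondence is pinned down and the finiteness/closedness of the essential fiber is used to prove properness of $H_0'$, both transversal-self-intersection clauses and the transversality conditions reduce to Lemma \ref{prop:fibered product in manc}, and the remaining checks are routine.
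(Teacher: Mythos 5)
Your reduction to the manifold level via $W_0'=X_1\fibp{t}{H_0}\tilde W_0$ and $H_0'=s\circ\pr_{X_1}$ is the natural route, and several of the individual checks (codimension one, local closed immersion, saturation of $\im H_0'$ under the groupoid action, the set-theoretic identity $\im f^{-1}h=F_0^{-1}(\im h)$ in (b)) are fine. The gap is the assertion that the essential fiber over $x$ ``is naturally in bijection with $(H_0')^{-1}(x)$'' and that the transversal self-intersection clauses of Definition \ref{def:hyper} and Definition \ref{def:orbimaps properies 2} therefore coincide. The essential fiber is a \emph{groupoid} whose object space is $(H_0')^{-1}(x)$; the hypotheses that $h$ is faithful, proper and a closed immersion only make it \emph{equivalent} to a finite set, i.e.\ its orbit set is finite --- each orbit may contain several (even infinitely many) objects. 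Definition \ref{def:orbimaps properies 2} sums conormals over one representative per orbit, whereas Definition \ref{def:hyper} applied to $H_0'$ sums over all of $(H_0')^{-1}(x)$. An arrow of the essential fiber identifies the conormal lines of its source and target inside $T_x^{\vee}X_0$, so any orbit with two or more objects forces the map $\bigoplus N^{\vee}\to T_x^{\vee}X_0$ taken over all of $(H_0')^{-1}(x)$ to fail injectivity; $H_0'$ is then \emph{not} a hyper map of manifolds. Concretely, take $\xx=[\rr/(\zz/2)]$ with the reflection action, presented by the action groupoid, and $\wc$ the free orbit $\{\pm1\}$ presented by the action groupoid on $\{\pm1\}$: here $h$ is hyper with $r=1$ over $x=+1$, but $(H_0')^{-1}(+1)$ has two points whose conormal lines coincide in $T_{+1}^{\vee}\rr$. (For presentations of $\wc$ with infinite orbits, $H_0'$ even fails to be proper.)

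The conclusion is still true and your argument can be repaired by localizing, since being a hyper subset is a local property of $X_0$ (as noted in Definition \ref{def:hyper}(b)): fix $x$, choose one representative $(\alpha_i,w_i)$ per orbit of the essential fiber, $1\leq i\leq r$, take small disjoint neighborhoods $U_i\subset W_0'$ of these points and a sufficiently small $V\ni x$ with $\im H_0'\cap V=\bigcup_i H_0'(U_i)$ (this uses properness of $s\times t$ and compactness of the essential fibers); then $\coprod_i U_i\to V$ is a hyper map of manifolds whose transversal self-intersection condition is exactly the fourth bullet of Definition \ref{def:orbimaps properies 2}. The same localization has to be carried into part (b), because multi-transversality of $F_0$ to $\im h$ is tested against these local hyper maps; with that amendment your deduction from the b-transversality of $f$ and $h$ together with the transversal self-intersection of $f^{-1}h$ goes through. (The paper records no argument of its own --- its proof reads ``Straightforward'' --- so there is no official route to compare against.)
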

\begin{proof}
Straightforward.
\end{proof}

\subsection{\label{subsec:Group-actions}Group actions}

Let $G$ be a compact lie group, with multiplication ${m:G\times G\to G}$
and identity $e:\pt\to G$. Given a bicategory of spaces $\cl$ such
as\footnote{More precisely, we need to be able to consider $G$ as a group object
in $\cl$ and certain products to exist. For $\cl=\text{\textbf{Orb}}_{\partial=\emptyset}^{+}$
we consider $G$ as having the trivial marking $\emptyset$, and we
have 
\[
\left(\xx,E\right)\times\left(\yy,F\right)=\left(\xx\times\yy,E\times\yy\coprod\xx\times F\right).
\]
} $\manc,\text{\textbf{Orb}},\text{\textbf{Orb}}_{\partial=\emptyset}^{+}$,
we construct a category $G-\cl$ of $G$-equivariant objects following
Romagny \cite{stacky-action}. We briefly explain how to translate
his definitions to our setup, and refer the reader to \cite{stacky-action}
for more details. A 0-cell of $G-\cl$ is given by a 4-tuple $\left(\xx,\mu,\alpha,a\right)$
where $\xx$ is a 0-cell of $\cl$, $\mu:G\times\xx\to\xx$ is a 1-cell,
and $\alpha$ and $a$ are 2-cells filling in, respectively, the following
square and triangle:
\[
\xymatrix{G\times G\times\xx\aru{r}{m\times\id_{\xx}}\ar[d]_{\id_{G}\times\mu} & G\times\xx\ar[d]^{\mu}\\
G\times\xx\ar[r]_{\mu} & \xx
}
\;\xymatrix{G\times\xx\ar[r]^{\mu} & \xx\\
\xx\ar[ur]_{\id_{\xx}}\ar[u]^{e\times\id_{\xx}}
}
.
\]
A 1-cell (or \emph{$G$-equivariant map})
\[
\left(\xx,\mu,\alpha,a\right)\to\left(\xx',\mu',\alpha',a'\right)
\]
is given by a pair $\left(\xx\xrightarrow{f}\xx',\sigma\right)$ where
$\sigma$ is a 2-cell filling in the square 
\[
\xymatrix{G\times\xx\ar[r]^{\mu}\ar[d]_{\id_{G}\times f} & \xx\ar[d]^{f}\\
G\times\xx'\ar[r]_{\mu'} & \xx'
}
.
\]
A 2-cell $\left(f,\sigma\right)\Rightarrow\left(f',\sigma'\right)$
is given by a 2-cell $f\xRightarrow{\beta}f'$. As usual, the 2-cells
$\alpha,a,\sigma,\beta$ are required to satisfy some coherence conditions,
cf. \cite[Definition 2.1]{stacky-action}. 

\subsubsection{The fixed-points of a hyperplane blowup.}

There's an obvious pair of pseudofunctors $\iota:\cl\to G-\cl$ and
$f:G-\cl\to\cl$; $\iota$ equips every 0-cell with the trivial $G$-action
and every 1-cell with the trivial $G$-equivariant structure, $f$
forgets the extra structure. Let $\xx$ be a 0-cell of $G-\cl$. Consider
the pseudofunctor $\xx_{!}^{G}:\cl^{op}\to Cat$ 
\[
\xx_{!}^{G}\left(-\right)=Hom_{G-\cl}\left(\iota\left(-\right),\xx\right),
\]
together with the obvious pseudonatural transformation $\xx_{!}^{G}\to Hom_{\cl}\left(-,f\left(\xx\right)\right)$.
$\xx_{!}^{G}$ may or may not be represented by an object of $\cl$;
If it is, we denote the representing object by $\xx^{G}$ and say
``the fixed-point locus $\xx^{G}$ exists (as an object of $\cl$)''
(cf. \cite[Definition 2.3]{stacky-action}). Here are a few examples.
\begin{itemize}
\item $\cl$ is the category of stacks: the fixed-point locus always exist,
see \cite[Proposition 2.5]{stacky-action} (though for a general stack,
this may be quite ill-behaved). 
\item $\cl=\text{\textbf{Orb}}$: it follows from the slice theorem that
the fixed point locus $\xx^{G}$ of any $G$-orbifold without boundary
$\xx$ exists, and the map $\xx^{G}\to\xx$ is a closed embedding. 
\item $\cl=\manc$ (considered as a bicategory with only identity 2-cells),
$G=\tb$ a torus group: we do not know whether the fixed-point set
exists in general.
\end{itemize}
The last example motivates restricting our attention to manifolds
(and orbifolds) with corners which are obtained as a hyperplane blowup.
We will see that if $\widetilde{\xx}$ is obtained as a hyperplane
blowup of $\xx$ at an invariant hyper subset, then the fixed-points
of $\widetilde{\xx}$ exist and the map $\widetilde{\xx}^{G}\to\widetilde{\xx}$
is a closed embedding. The remainder of this subsection is devoted
to formulating and proving this result.

Any pseudofunctor $\cl_{1}\to\cl_{2}$, where $\cl_{i}$ is a bicategory
of spaces as above for $i=1,2$, induces a functor between the corresponding
$G$-categories. In particular, the blow up $B:\text{\textbf{Orb}}^{+}\to\text{\textbf{Orb}}_{ps}^{c}$
induces a functor
\[
G-\text{\textbf{Orb}}^{+}\xrightarrow{B}G-\text{\textbf{Orb}}_{ps}^{c}.
\]

\begin{prop}
Let $G=U\left(1\right)^{m}$ be a torus group, and let $\widetilde{\xx}=B\left(\xx,E\right)$
denote the blowup of an object of $G$-$\text{\textbf{Orb}}^{+}$
(we're supressing the $G$-action data from the notation). Let $i:\xx^{G}\to\xx$
be the closed embedding of the fixed-points of $\xx$. Then 

(a) the fixed points $\left(\widetilde{\xx}\right)^{G}$ exist as
an object of $\text{\textbf{Orb}}^{c}$, and the map $\left(\widetilde{\xx}\right)^{G}\xrightarrow{\tilde{i}}\widetilde{\xx}$
is a closed embedding.

(b) $i$ is multi-transverse to $h$ and there's a natural isomorphism
\[
\left(\widetilde{\xx}\right)^{G}\simeq\left(\xx^{G}\right)^{\sim}=:B\left(\xx^{G},\im i^{-1}h\right),\text{ and}
\]
(c) there's a 2-cell making the following square cartesian 
\[
\xymatrix{\left(\xx^{G}\right)^{\sim}=\left(\widetilde{\xx}\right)^{G}\ar[r]\ar[d] & \widetilde{\xx}\ar[d]\\
\xx^{G}\ar[r] & \xx
}
\]
where the horizontal maps are the structure maps of the fixed points
and the vertical maps are the maps associated with the hyperplane
blowup.
\end{prop}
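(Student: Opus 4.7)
The plan is to exhibit $(\xx^G)^\sim := B(\xx^G, \im i^{-1} h)$, together with its canonical map to $\widetilde\xx$ coming from functoriality of $B$, as the representing object for the $G$-fixed-point functor of $\widetilde\xx$, and deduce (a), (b), (c) from this identification.

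The first step is to verify that $i:\xx^G\to\xx$ is multi-transverse to the hyper map $h:\wc\to\xx$. Here connectedness of the torus $G$ is crucial. Given $x\in\xx^G$ with essential fiber $\{w_1,\dots,w_r\}$ of $h$, connectedness of $G$ forces each $w_i$ to be $G$-fixed (since $G$ permutes this finite set continuously) and forces each branch of $\im h$ through $x$ to be $G$-preserved. Since $G$ is a connected torus acting on the one-dimensional real normal line to such a branch at $x$, it must act trivially on it. Hence each conormal line $N_{w_i}^\vee\subset T_x^\vee\xx$ lies in $(T_x^\vee\xx)^G\simeq T_x^\vee\xx^G$, and the given injectivity of $\bigoplus_{i=1}^r N_{w_i}^\vee\to T_x^\vee\xx$ directly implies injectivity into the subspace $T_x^\vee\xx^G$. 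This establishes both transversality of $i$ to $h$ and the transversal self-intersection of $i^{-1}h$, so $(\xx^G)^\sim$ is defined and $\tilde i:(\xx^G)^\sim\to\widetilde\xx$ is produced by the 2-functoriality of $B$ (Proposition~\ref{prop:hyper bu functor ManC}, extended to orbifolds).

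Next I would work in $G$-equivariant local orthant charts to identify the fixed locus of $\widetilde\xx$ with $(\xx^G)^\sim$. By the slice theorem applied to an atlas of $\xx$, near $x$ we may choose coordinates $(t_1,\dots,t_n)$ on which $G$ acts linearly and in which the branches of $\im h$ are the first $r$ coordinate hyperplanes $\{t_i=0\}$. The triviality argument above shows that $t_1,\dots,t_r$ are $G$-invariant, so a $G$-stable complement for $\langle t_1,\dots,t_r\rangle^\vee$ may be chosen and $V^G$ becomes a coordinate subspace $\rr^r\oplus(\rr^{n-r})^G$. In these coordinates $B(V,\im h\cap V)=2^r\times[0,\infty)^r\times\rr^{n-r}$, with $G$ acting trivially on the first two factors (it fixes each germ of orthant component and acts trivially on $t_1,\dots,t_r$) and through its original linear action on $\rr^{n-r}$. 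The $G$-fixed set is therefore the closed submanifold with corners $2^r\times[0,\infty)^r\times(\rr^{n-r})^G=B(V^G,\im i^{-1}h\cap V^G)$, which is the local description of $\tilde i$.

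The universal property of the fixed-point functor can likewise be verified in these charts: a $G$-equivariant map from a trivially-acted test object into $2^r\times[0,\infty)^r\times\rr^{n-r}$ must have its last component factor through $(\rr^{n-r})^G$ while the first two components are unconstrained, so it factors uniquely through the fixed submanifold. This proves (a) and (b). The cartesian square in (c) reduces locally to the evident pullback of $(\rr^{n-r})^G\hookrightarrow\rr^{n-r}$ along the projection $2^r\times[0,\infty)^r\times\rr^{n-r}\to\rr^n$ and follows from Lemma~\ref{lem:fibp in orb}. The main obstacle I anticipate is bookkeeping of the 2-cells in the $G$-action data and in the pseudofunctorial compositions involved (passage to $G$-objects, hyperplane blowup, formation of fibered products), but since the local computations are strict and the relevant coherences are determined by the underlying charts where all squares commute on the nose, these 2-cells should compose without serious issue.
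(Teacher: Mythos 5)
Your overall strategy coincides with the paper's: reduce to local orthant charts around a fixed point, show the torus acts trivially on the conormal lines to the branches of $E$ (since a connected compact group has no nontrivial real rank-one representations), and then read off (a)--(c) from the linear model $2^{r}\times[0,\infty)^{r}\times\rr^{n-r}$. Your multi-transversality argument for part (b) is exactly the paper's key observation, and your local verification of the universal property and of the cartesian square matches what the paper leaves as ``straightforward.''

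There is, however, one genuine gap, and it sits precisely where the paper spends most of its effort. You assert that ``by the slice theorem \dots\ we may choose coordinates $(t_{1},\dots,t_{n})$ on which $G$ acts linearly \emph{and} in which the branches of $\im h$ are the first $r$ coordinate hyperplanes.'' The slice theorem gives linearizing coordinates, and Proposition \ref{prop:orthant charts exist} gives orthant coordinates, but neither gives both at once: in a linearizing chart the branches are merely $G$-invariant hypersurfaces tangent to coordinate hyperplanes, and in an orthant chart the action is a priori nonlinear. Choosing ``a $G$-stable complement for $\langle t_{1},\dots,t_{r}\rangle^{\vee}$'' only normalizes the cotangent space at the point, not the hypersurfaces themselves. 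The paper closes this gap by Haar-averaging the branch-defining functions $x_{i}\mapsto x_{i}'=\int_{G}x_{i}(g.q)\,dH(g)$ --- these remain defining functions because the branches are $G$-invariant, and $dx_{i}'|_{0}=dx_{i}|_{0}$ precisely because of the triviality of the conormal representations that you already established --- and then completing with exponential coordinates for a $G$-invariant metric in the remaining directions. Since you have all the needed ingredients, this is a fixable omission rather than a wrong turn, but as written the existence of the simultaneous orthant-and-linearizing chart is asserted rather than proved.
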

\begin{proof}
We describe the argument when $\widetilde{\xx}=B\left(X,E\right)$
is the blowup of a marked manifold with corners, and leave the general
case to the reader. Note that the statement is local, so we fix some
$p\in X^{G}$ and work throughout with germs of functions around $p$
and its images (i.e., we will \emph{not} specify the open subsets
on which each map is defined). Using Proposition \ref{prop:orthant charts exist}
we reduce to the following setup.

Let 
\[
W=\left[k\right]\times\rr^{n-1}\xrightarrow{h}X=\rr^{n}
\]
denote the standard hyper map, so for $1\leq i\leq k$, $\left\{ i\right\} \times\rr^{n-1}\xrightarrow{h_{i}}\rr^{n}$
is the map 
\begin{equation}
\left(t_{1},...,t_{n-1}\right)\mapsto\left(t_{1},...,t_{i-1},0,t_{i},...,t_{n-1}\right).\label{eq:std h_i form-1}
\end{equation}
we assume that $G=\tb$ acts compatibly on the domain and range of
$h$; since $G$ is connected this means each $h_{i}$ is $G$-equivariant.
We take $p$ to be the origin $0\in\rr^{n}$, and write $\left\{ q_{i}=\left\{ i\right\} \times0\right\} _{i=1}^{k}$
for its preimages under $h$. 

Although a-priori, the action in these coordinates is \emph{not }linear,
it does preserve the zeros of the first $k$ components, and thus
also their signs. More precisely, if we define the signature $\sigma\left(x\right)\in\left\{ -,0,+\right\} ^{k}$
of a point $\left(x_{1},...,x_{n}\right)\in\rr^{n}$ by recording
the sign of the first $k$ coordinates, then $\sigma\left(g.x\right)=\sigma\left(x\right)$
for all $g,x$. We define a new orthant chart $\left(x_{1}'\left(q\right),...,x_{n}'\left(q\right)\right)$
(in a perhaps smaller neighborhood of the origin) by
\[
x_{i}'\left(q\right)=\begin{cases}
\int_{g\in G}x_{i}\left(g.q\right)\,dH\left(g\right) & 1\leq i\leq k\\
x_{i}\left(q\right) & k+1\leq i\leq n
\end{cases}
\]
where $dH$ denotes the Haar measure on $G$. Clearly, $x_{i}'$ are
$G$-invariant and $x_{i}'\circ h_{i}\equiv0$ for $1\leq i\leq k$;
the key point is that, since $G$ admits no non-trivial rank one representations,
the lines $\ker\left(T_{p}^{*}X\to T_{q_{i}}^{*}W\right)$ are fixed,
and thus $dx_{i}'|_{0}=dx_{i}|_{0}$ for $1\leq i\leq n$; in particular
$\left(dx'_{i}|_{0}\right)_{i=1}^{n}$ form a basis for $T_{p}^{*}X$. 

We can choose a basis $v_{1}^{*},...,v_{n}^{*}$ to $T_{p}^{*}X$
so that $v_{i}^{*}=dx_{i}'|_{0}$ for $1\leq i\leq k$ and 
\[
v_{k+1}^{*},...,v_{s}^{*},v_{s+1}^{*}+\sqrt{-1}v_{s+2}^{*},...,v_{n-1}^{*}+\sqrt{-1}v_{n}^{*}
\]
are common eigenvectors for the infinitesimal generators $\xi_{1},...,\xi_{m}\in End\left(T_{p}^{*}X\right)$
of the linearized action. Fix a $G$-invariant metric $R$, let $v_{1},...,v_{n}$
be a basis for $T_{p}X$ dual to $v_{1}^{*},...,v_{n}^{*}$, and consider
exponential coordinates $\left(y_{1},...,y_{n}\right)$ for $X=\rr^{n}$
around $p=0$ so that 
\[
\left(y_{1},...,y_{n}\right)\mapsto\exp_{R}\left(\sum y_{i}v_{i}\right).
\]

Now consider the coordinates

\[
\left(x_{1}',...,x_{k}',y_{k+1},...,y_{n}\right).
\]
These define an orthant chart. Moreover, in these coordinates the
action is the linear action on
\[
\rr^{n}=\rr^{s}\oplus\bigoplus_{i=1}^{r}\cc_{\lambda_{i}}
\]
fixing $\rr^{s}$, $s\geq k$ and acting on $\cc_{\lambda_{i}}$ by
a character $\lambda_{i}:\tb\to U\left(1\right)$.

Working with such orthant charts, the verification of properties
(a)-(c) is straightforward.
\end{proof}

\bibliographystyle{amsabbrvc}
\bibliography{localization}

\end{document}